\newtheorem{proposition}{Proposition}
\newtheorem{theorem}[proposition]{Theorem}
\newtheorem{lemma}[proposition]{Lemma}
\newtheorem{corollary}[proposition]{Corollary}
\newtheorem{remark}[proposition]{Remark}
\newtheorem{example}[proposition]{Example}
\theoremstyle{definition}
\newtheorem{definition}[proposition]{Definition}
\numberwithin{equation}{section}
\numberwithin{proposition}{section}
\numberwithin{figure}{section}
\numberwithin{table}{section}
\newcommand{\R}{\mathbb{R}}
\newcommand{\N}{\mathbb{N}}
\renewcommand{\d}{\mathrm{d}}
\newcommand{\la}{\left\langle}\newcommand{\ra}{\right\rangle}
\newcommand{\eps}{\epsilon}
\newcommand{\cX}{\mathcal{X}}
\renewcommand{\H}{\mathsf{H}}
\newcommand{\cH}{\mathcal{H}}
\newcommand{\cD}{\mathcal{D}}
\newcommand{\cE}{\mathcal{E}}
\renewcommand{\bar}{\overline}
\renewcommand{\tilde}{\widetilde}
\newcommand{\sF}{\mathsf{F}}
\newcommand{\dist}{\mathsf{dist}}
\newcommand{\itr}{\mathsf{int}\,}
\newcommand{\C}{\mathcal C}
\newcommand{\rv}[1]{\textcolor{black}{#1}}
\newcommand{\HJ}{\mathrm{HJ}}
\renewcommand{\hat}{\widehat}
\newcommand{\supp}{\mathsf{supp}\,}
\begin{document}

\author[Hong-Bin Chen]{Hong-Bin Chen}
\address[Hong-Bin Chen]{Courant Institute of Mathematical Sciences, New York University, New York, New York, USA}
\email{hbchen@cims.nyu.edu}

\author[Jiaming Xia]{Jiaming Xia}
\address[Jiaming Xia]{Department of Mathematics, University of Pennsylvania, Philadelphia, Pennsylvania, USA}
\email{xiajiam@sas.upenn.edu}

\keywords{Hamilton--Jacobi equation, monotonicity, convex cone}
\subjclass[2020]{35A01, 35A02, 35D40, 35F21}

\title{Hamilton--Jacobi equations with monotone nonlinearities on convex cones}

\begin{abstract}
We study the Cauchy problem of a Hamilton--Jacobi equation with the spatial variable in a closed convex cone. A monotonicity assumption on the nonlinearity allows us to prescribe no condition on the boundary of the cone. We show the well-posedness of the equation in the viscosity sense and prove several properties of the solution: monotonicity, Lipschitzness, and representations by variational formulas.
\end{abstract}

\maketitle

\section{Introduction}

In \cite{mourrat2018hamilton, mourrat2020nonconvex, mourrat2020free, chen2020hamiltonTensor, chen2021statistical, chen2022hamilton}, the limit free energy of mean-field disordered models has been shown to be described by the viscosity solution to a Hamilton--Jacobi equation of the following form:
\begin{align}\label{e.hj_intro}
    \begin{cases}
    \partial_t f - \H(\nabla f) =0 ,\quad & \text{on $\R_+\times \C$}
    \\
    f(0,\cdot)=\psi  ,\quad & \text{on $ \C$}
    \end{cases}
\end{align}
where we set $\R_+ = [0,\infty)$ throughout and $\C$ is a finite-dimensional closed convex cone. Moreover, in these settings, the restriction of the nonlinearity $\H$ to $\C$, and the initial condition $\psi$ are monotone along the direction given by the dual cone $\C^*$ of $\C$. In this work, we interpret the well-posedness as the comparison principle together with the existence of solutions.

Other than invalidating the Dirichlet boundary condition, the settings of these models do not suggest a suitable boundary condition on $\partial\C$. To ensure the well-posedness of the Cauchy problem in the sense of viscosity solutions, the first attempt was to impose the Neumann boundary condition as done in \cite{mourrat2018hamilton, mourrat2020nonconvex, mourrat2020free}. When the geometry of $\partial \C$ is complicated, for example, in the case where $\C$ is the set of positive semi-definite matrices, the Neumann boundary condition makes it hard to verify the comparison principle and the existence of solutions at the same time. 

Hence, in \cite{chen2020hamiltonTensor, chen2021statistical}, another attempt was to introduce a modification $\bar \H$ that coincides with $\H$ on $\C$ but is monotone on the entire space and to require the solution to satisfy the equation in the viscosity sense also on $\partial \C$. This approach turns out to be successful. However, the modification $\bar \H$ seems artificial, and it was not shown whether the solution depends on the way to construct the modification. 

In this paper, we show that there is no need to impose any boundary condition on $\partial \C$, due to the monotonicity of the nonlinearity and the initial condition. The solutions are required to satisfy the equation in the viscosity sense only on the interior of $\R_+\times\C$. Moreover, only the values of $\H$ on $\C$ matter, and thus the solution does not depend on how $\H$ is modified outside $\C$.
Aside from the well-posedness, 
we also prove several useful properties of the solution that were needed in the aforementioned works and will be needed in future works.
In particular, the results here for finite-dimensional equations will be used in the study of the equation in infinite dimensions~\cite{chen2022hamilton}.

The main idea behind the irrelevance of the boundary is from \cite{crandall1985viscosity,souganidis1986remark}:
one can use $\mathsf{dist}(\cdot,\partial \C)$, the distance function to the boundary, to push the contact points of test functions away from the boundary. In our setting, this function admits a representation~\eqref{e.d_c=dist} better reflecting the geometry of cones.

\subsection{Definitions and the main result}\label{s.intro_main_result}

Let $\cH$ be a finite-dimensional Hilbert space with inner product $\la\, \cdot\,,\,\cdot\,\ra$ and associated norm $|\cdot|$. For a subset $\mathcal{A}\subset\cH$, we denote its interior by $\itr \mathcal{A}$ or $\mathring{\mathcal{A}}$.

\subsubsection{Cones}
Throughout, we assume that
\begin{enumerate}[start=1,label={\rm{(A)}}]
    \item\label{i.assump_C} $\C$ is a nonempty closed convex cone in $\cH$, which is pointed (i.e.\ $\C\cap(-\C)=\{0\}$) and has nonempty interior (i.e.\ $\mathring{\C}\neq\emptyset$).
\end{enumerate}
Here, $\C$ is said to be a convex cone if $\lambda x + \lambda' x'\in \C$ for all $\lambda,\lambda'\geq 0$ and $x,x'\in\C$.
Note that being pointed means that $\C$ does not contain any nonzero subspace. By restricting to a subspace of $\cH$, $\mathring\C$ can always be made nonempty.

The dual cone of $\C$ is defined by $\C^* = \{x\in\cH:\la x, y\ra \geq 0, \, \forall y \in \C\}$.
Since $\C$ is a nonempty closed convex cone, it is easy to see that $\C^*$ is also a nonempty closed convex cone. Moreover, it is well-known (c.f.\ \cite[Corollary~6.33]{bauschke2011convex}) that $(\C^*)^* = \C$.

Since a cone can induce an order, we define the following notion of monotonicity.
For $\cD\subset\cH$ and $g:\cD\to (-\infty,\infty]$, $g$ is said to be \textit{$\C^*$-increasing} if $g(x)\geq g(y)$ for all $x,y\in\cD$ satisfying $x- y \in \C^*$.
Notice that if $g$ is a differentiable function on $\C$ and $g$ is $\C^*$-increasing, then $\nabla g(x)\in \C$ for every $x\in\mathring\C$. Indeed, $g(x+\eps y)\geq g(x)$ for every $y\in\C^*$ and $\eps>0$ sufficiently small. Sending $\eps\to0$, we have $\la y, \nabla g(x)\ra\geq 0$ and thus $\nabla g(x)\in(\C^*)^*=\C$.

\subsubsection{Viscosity solutions}
Recall that we have set $R_+=[0,\infty)$. For a subset $\cD\subset \cH$ and a function $\sF:\cH\to\R$, we consider the following Hamilton--Jacobi equation, as a slight generalization of \eqref{e.hj_intro}:
\begin{align*}
    \partial_t f - \sF(\nabla f)=0,\quad\text{on}\ \R_+\times \cD.
\end{align*}
We denote this equation by $\HJ(\cD, \sF)$ and the associated Cauchy problem with initial condition $f(0,\cdot)=\psi$ by $\HJ(\cD,\sF;\psi)$. 
Classical references to viscosity solutions include \cite{crandall1983viscosity,crandall1984some,lions1982generalized,crandall1984developments}.

\begin{definition}[Viscosity solutions]\label{d.vs}
\leavevmode
\begin{enumerate}
    \item An upper semi-continuous function $f:\R_+\times \cD\to \R$ is a \textit{viscosity subsolution} of~$\HJ(\cD, \sF)$ if for every $(t,x) \in (0,\infty)\times \cD$ and every smooth $\phi:(0,\infty)\times \cD\to\R$ such that $f-\phi$ has a local maximum at $(t,x)$, we have
\begin{align*}
\left(\partial_t \phi - \sF(\nabla\phi)\right)(t,x)\leq 0.
\end{align*}

\item A lower semi-continuous function $f:\R_+\times \cD\to \R$ is a \textit{viscosity supersolution} of~$\HJ(\cD, \sF)$ if for every $(t,x) \in (0,\infty)\times \cD$ and every smooth $\phi:(0,\infty)\times \cD\to\R$ such that $f-\phi$ has a local minimum at $(t,x)$, we have
\begin{align*}
\left(\partial_t \phi - \sF(\nabla\phi)\right)(t,x)\geq 0. 
\end{align*}

\item A continuous function $f:\R_+\times \cD\to \R$ is a \textit{viscosity solution} of~$\HJ(\cD, \sF)$ if $f$ is both a viscosity subsolution and supersolution.

\end{enumerate}

\end{definition}

Here, $f-\phi$ is said to have a local extremum at $(t,x)$ if $f-\phi$ achieves an extremum at $(t,x)$ over an open neighborhood of $(t,x)$ (in $\R\times\cH$) intersected with $(0,\infty)\times\cD$. 

We often drop the qualifier ``viscosity'', and simply write that $f$ is a subsolution, supersolution, or solution. We also write that $f$ solves $\HJ(\cD,\sF)$ if $f$ is a solution.
Throughout, the restriction of a function $g$ to a set $\cE$ is denoted by $g \lfloor_\cE$. For $\cD'\supset\cD$, a function $f:\R_+\times \cD'\to\R$ is said to be a subsolution, supersolution, or solution of $\HJ(\cD, \sF)$ if $f\lfloor_{\R_+\times\cD}$ is so. 

The goal of this work is to show that the solution $f:\R_+\times\C\to\R$ of \eqref{e.hj_intro} should be understood to be the solution of $\HJ(\mathring\C,\H)$. In other words, the values on $\partial\C$ are not relevant.

\subsubsection{Classes of functions}

For a metric space $\cX$, we define $\Gamma_\mathrm{cont}(\cX)$, $\Gamma_\mathrm{Lip}(\cX)$, and $\Gamma_\mathrm{locLip}(\cX)$ to be the class of functions $g:\cX\to\R$ that are continuous, Lipschitz, and locally Lipschitz, respectively. For $\cD\subset \cH$, we denote by $\Gamma^\nearrow(\cD)$ the class of $\C^*$-increasing functions on $\cD$. We also set $\Gamma^\nearrow_\square(\cD)= \Gamma^\nearrow(\cD)\cap \Gamma_\square(\cD)$ where $\square$ is a placeholder for subscripts.

We define
\begin{align*}
    \mathfrak{M} = \left\{f:\R_+\times \C\to\R\ \big|\ f(t,\cdot)\in \Gamma^\nearrow(\C),\,\forall t\in\R_+\right\},
\end{align*}
and
\begin{align*}
    \mathfrak{L} =\left\{f:\R_+\times \C\to\R\ \bigg|\ f(0,\cdot)\in \Gamma_\mathrm{Lip}(\C);\quad \sup_{t>0,\,x\in\C}\frac{|f(t,x) - f(0,x)|}{t}<\infty\right\}.
\end{align*}
We define the following subclass
\begin{align*}
    \mathfrak{L}_\mathrm{Lip} = \left\{f\in\mathfrak{L}\ \bigg| \ \sup_{t\in\R_+}\|f(t,\cdot)\|_\mathrm{Lip}<\infty\right\},
\end{align*}
where we denote by $\|\cdot\|_\mathrm{Lip}$ the Lipschitz coefficient of a function. Note that functions in $\mathfrak{M}$, $\mathfrak{L}$ or $\mathfrak{L}_\mathrm{Lip}$ are not required to be continuous.

We are mainly interested in $\HJ(\mathring\C,\H)$, for $\H:\cH\to\R$ satisfying $\H\lfloor_\C\in \Gamma_\mathrm{locLip}^\nearrow(\C)$. Throughout, for $x\in\cH$ and $r>0$, we set $B(x,r)=\{y\in\cH:|y-x|\leq r\}$.

\begin{theorem}\label{t}
Let $\H:\cH\to\R$ satisfy $\H\lfloor_\C\in \Gamma^\nearrow_\mathrm{locLip}(\C)$. Then the following holds:
\begin{enumerate}
  \setlength\itemsep{1em}
    \item \label{i.main_cp} 
    (Comparison principle)
    
        \medskip
        
    \noindent If 
    $u,v:\R_+\times\C\to\R$ are respectively a subsolution and a supersolution of $\HJ(\mathring\C,\H)$ in $\mathfrak{M}\cap\mathfrak{L}_\mathrm{Lip}$ (or in $\mathfrak{M}\cap\mathfrak{L}$ if $\H\lfloor_\C\in \Gamma^\nearrow_\mathrm{Lip}(\C)$), then $\sup_{\R_+\times\C}(u-v)  = \sup_{\{0\}\times \C}(u-v)$.
    
    \item \label{i.main_exist}
    (Existence of solutions)
    
        \medskip
        
    \noindent For every $\psi\in \Gamma^\nearrow_\mathrm{Lip}(\C)$, there is a viscosity solution $f:\R_+\times \C\to\R$ of $\HJ(\mathring\C,\H;\psi)$ unique in $\mathfrak{M}\cap\mathfrak{L}_\mathrm{Lip}$ (or in $\mathfrak{M}\cap\mathfrak{L}$ if $\H\lfloor_\C\in\Gamma^\nearrow_\mathrm{Lip}(\C)$). In addition, $f$ satisfies the following:
    \bigskip
    \begin{enumerate}
    \setlength\itemsep{1em}
        \item  \label{i.main_lip} 
        (Lipschitzness)
        
        \medskip
        
        The solution $f$ is Lipschitz and satisfies
        \begin{gather*}
\sup_{t\in\R_+}\|f(t,\cdot)\|_\mathrm{Lip}=\|\psi\|_\mathrm{Lip},
            \\
            \sup_{x\in\C}\|f(\cdot,x)\|_\mathrm{Lip}\leq \sup_{\C\cap B(0,\|\psi\|_\mathrm{Lip})}|\H|.
        \end{gather*}
        
        \item
        \label{i.main_t_nond}
        (Monotonicity in time)
        
        \medskip
        
        If $\H\lfloor_\C \geq 0$, then $f(t,x)\leq f(t',x)$ for all $t'\geq t\geq 0$ and $x\in\C$.
  
        \item \label{i.main_equiv} 
        (Solving modified equations)
        
        \medskip
        
        For every $\sF\in\Gamma^\nearrow_\mathrm{locLip}(\cH)$ satisfying $\sF\lfloor_{\C\cap B(0,\|\psi\|_\mathrm{Lip})} = \H\lfloor_{\C\cap B(0,\|\psi\|_\mathrm{Lip})}$, $f$ is the solution of $\HJ(\C,\sF;\psi)$ unique in $\mathfrak{L}_\mathrm{Lip}$ (or in $\mathfrak{L}$ if $\sF\in\Gamma^\nearrow_\mathrm{Lip}(\cH)$).
        
        \item
        \label{i.main_var_rep}
        (Variational representations)
        
        \medskip
        
        Under an additional assumption that $\C$ has the {\rm Fenchel--Moreau property}, $f$ can be represented by the Hopf--Lax formula \eqref{e.hopf_lax} if $\H\lfloor_\C$ is convex and bounded below; and by the Hopf formula \eqref{e.hopf} if $\psi$ is convex.
    \end{enumerate}
\end{enumerate}
\end{theorem}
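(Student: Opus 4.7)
The plan is to prove the theorem in the stated order: first the comparison principle, then existence/uniqueness, then each of the four listed properties. Only the comparison principle is non-routine; once it is in hand, the remaining parts will follow by standard viscosity manipulations that exploit the $\C^*$-monotonicity built into $\mathfrak{M}$.

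For part~(1), the subtlety is that $u$ and $v$ satisfy the viscosity inequalities only on $\R_+\times\mathring\C$, so the usual doubling-of-variables argument breaks if the contact point lands on $\partial\C$. Supposing for contradiction that $M:=\sup_{\R_+\times\C}(u-v)>\sup_{\{0\}\times\C}(u-v)$, I would maximize
\begin{align*}
\Phi(t,s,x,y)=u(t,x)-v(s,y)-\frac{|t-s|^2+|x-y|^2}{2\alpha}-\lambda(t+s)-\eta(|x|^2+|y|^2)-\zeta_\delta(x,y)
\end{align*}
over $\R_+\times\R_+\times\C\times\C$. The first three corrections are the standard doubling/penalization, $\eta(|x|^2+|y|^2)$ ensures attainment using the growth of $u,v$ inherited from $\mathfrak{L}$ or $\mathfrak{L}_\mathrm{Lip}$, and $\lambda>0$ small compared with $M-\sup_{\{0\}\times\C}(u-v)$ forces the maximizer to have $t,s>0$. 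The crucial device is the penalty $\zeta_\delta$, designed to forbid contact on $\partial\C$; it is built from the representation
\begin{align*}
\dist(x,\partial\C)=\inf_{e\in\C^*,\,|e|=1}\la x,e\ra,\qquad x\in\C,
\end{align*}
alluded to in the introduction---either as $\zeta_\delta(x,y)=\delta(\dist(x,\partial\C)^{-1}+\dist(y,\partial\C)^{-1})$, or, in a version that leans more heavily on monotonicity, by fixing $e\in\mathring{\C^*}$, adding a linear term $\delta\la e,\cdot\ra$ on the slot one wants to push inward, and shifting the other slot by $\tau e$, using the $\C^*$-monotonicity of $u$ (resp.\ $v$) to control the error. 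Once the maximizer lies in $\mathring\C\times\mathring\C$, Ishii's lemma produces matching subsolution/supersolution jets, and the local Lipschitzness of $\H\lfloor_\C$ on the bounded set in which these gradients live---bounded by the Lipschitz constants from $\mathfrak{L}_\mathrm{Lip}$---lets one pass $\alpha\to 0$ followed by $\eta,\delta,\lambda\to 0$ in the correct order to obtain $\lambda\leq o(1)$, the contradiction. Keeping $\delta$ bounded away from $0$ while $\alpha,\eta\to 0$ is the main technical step, and the principal obstacle of the whole theorem.

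For part~(2), given~(1), I would construct the solution by extending $\H$ and $\psi$ to $\C^*$-monotone Lipschitz functions $\bar\H:\cH\to\R$ and $\bar\psi:\cH\to\R$ that agree with $\H$ on $\C\cap B(0,\|\psi\|_\mathrm{Lip})$ and with $\psi$ on $\C$, solving $\HJ(\cH,\bar\H;\bar\psi)$ on the whole space by classical theory, and setting $f:=\bar f\lfloor_{\R_+\times\C}$. Comparing $\bar f(t,\cdot)$ with its translates $\bar f(t,\cdot+\tau e)$ for $e\in\C^*$ shows that $f\in\mathfrak{M}\cap\mathfrak{L}_\mathrm{Lip}$; uniqueness then follows from~(1). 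Property~(a) comes from comparison against spatial translates of $f$ (for the spatial bound) and against the initial-data barriers $\psi\pm t\sup_{\C\cap B(0,\|\psi\|_\mathrm{Lip})}|\H|$ (for the time bound); (b) from comparison of $f(t,x)$ with $f(t+h,x)$, using $\H\lfloor_\C\geq 0$ to make the time-shift a subsolution; (c) from the observation that~(a) forces the viscosity gradients of $f$ to lie in $\C\cap B(0,\|\psi\|_\mathrm{Lip})$, so that any $\sF$ agreeing with $\H$ on that set must produce the same solution, again by applying~(1) to $\HJ(\C,\sF;\psi)$. Finally, for~(d), I would verify directly that the Hopf--Lax and Hopf formulas produce Lipschitz $\C^*$-monotone viscosity solutions of $\HJ(\mathring\C,\H;\psi)$, the Fenchel--Moreau property of $\C$ being used precisely to perform the Legendre duality within $\C$ rather than on all of $\cH$, after which uniqueness from~(1) identifies them with $f$.
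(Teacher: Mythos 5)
Your overall scaffolding (comparison principle first, then existence by extension and restriction, then the four properties by comparison against translates and barriers) matches the paper's strategy at a high level, but there are two places where the proposal has genuine gaps rather than mere omissions of routine detail.

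First, the boundary-dropping result is missing and cannot be avoided. For part~(c) the theorem asserts that $f$ solves $\HJ(\C,\sF;\psi)$ --- with the equation required to hold in the viscosity sense at every $(t,x)\in(0,\infty)\times\C$, including $x\in\partial\C$ --- and is unique in $\mathfrak{L}_\mathrm{Lip}$ (\emph{not} $\mathfrak{M}\cap\mathfrak{L}_\mathrm{Lip}$). You define $f:=\bar f\lfloor_{\R_+\times\C}$ and treat it as obvious that this restriction solves $\HJ(\C,\sF;\psi)$; it does not. A local extremum of $f-\phi$ over $(0,\infty)\times\C$ at a boundary point $x\in\partial\C$ is taken over a half-space-like set, not an open neighborhood of $x$ in $\cH$, so the subsolution/supersolution inequality for $\bar f$ on $\cH$ gives you nothing. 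The paper handles exactly this in Proposition~2.1 (drop\_bdy), which is a full doubling argument of its own using the auxiliary penalty $\eps/d_\C$ and the superdifferential lemma~2.3; it converts a solution of $\HJ(\mathring\C,\sF)$ into one of $\HJ(\C,\sF)$. Likewise, the uniqueness in $\mathfrak{L}_\mathrm{Lip}$ claimed in~(c) cannot follow from ``applying~(1)'' because~(1) only covers $u,v\in\mathfrak{M}\cap\mathfrak{L}_\mathrm{Lip}$ and the equation on $\mathring\C$ with nonlinearity $\H$; you need a separate comparison statement for $\HJ(\C,\sF)$ in $\mathfrak{L}_\mathrm{Lip}$ (the paper's Corollary~3.2, which holds without any $\mathfrak{M}$ hypothesis because $\sF$ is globally Lipschitz).

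Second, the penalty $\zeta_\delta(x,y)=\delta\big(\dist(x,\partial\C)^{-1}+\dist(y,\partial\C)^{-1}\big)$ inserted directly into the doubled variables is not smooth: $d_\C=\dist(\cdot,\partial\C)$ is concave and in general non-differentiable, so ``Ishii's lemma produces matching jets'' does not apply as stated --- Ishii's lemma requires the coupling/penalty term to be $C^2$, or one must work with superjets of the non-smooth penalty by hand. The paper sidesteps this by introducing a \emph{separate} variable $y$ coupled to $x$ via $\frac{\alpha}{2}|x-y|^2$ and placing the penalty $\delta/d(y)+\eps|y|$ on $y$ alone; the maximality in $y$ then yields information about $D^+d(y_\alpha)$ via Lemma~2.3(iii)--(iv), and this is what ultimately produces the $\C^*$-direction vector $p$ that the monotonicity of the nonlinearity absorbs. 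Your alternative phrasing (a linear term $\delta\la e,\cdot\ra$ with $e\in\mathring{\C^*}$) is also insufficient as written: a linear term does not blow up near $\partial\C$ and so does not by itself force the contact point into the interior. It is worth noting that the paper chooses to prove the comparison principle (Proposition~3.1) for a nonlinearity $\sF$ with good properties on all of $\cH$ and \emph{without} assuming $u,v\in\mathfrak{M}$; the $\mathfrak{M}$ hypothesis and the restriction of $\H$ to $\C$ only enter through the separate modularity lemma~2.6 and Corollary~3.3. Your plan instead tries to prove comparison directly for $\H$ using $\mathfrak{M}$ to constrain gradients, which is a plausible alternate route, but the argument needs to be written carefully because the test-function gradients for $u$ and for $v$ differ by the penalty terms and you must check that both land in a region where $\H$ is controlled.

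The remaining steps (existence on $\cH$ by classical theory plus restriction, monotonicity by comparing with translates, Lipschitz bounds by comparing with barriers and translates, and the Hopf/Hopf--Lax verifications using the Fenchel--Moreau property) are all faithful to the paper's line of reasoning and are reasonable to leave as sketches.
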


Given another norm on $\cH$ comparable to $|\cdot|$, Propositions~\ref{p.lipschitz} and~\ref{p.lip_t} show that the Lipschitz coefficients of $f$ with respect to this norm are also bounded, which is a generalization of \eqref{i.main_lip}. In~\eqref{i.main_equiv}, the definition of the \textit{Fenchel--Moreau property} is given in Definition~\ref{d.fenchel_moreau_prop}, which states that a version of the Fenchel--Moreau biconjugation identity holds for functions defined on $\C$. 

Since one does not need to pay attention to whether a test function touches the solution on the boundary or not, the analysis of
$\HJ(\C,\sF)$ is easier, in particular, in using the trick of doubling variables. Hence, the property \eqref{i.main_equiv} can be very useful in deriving further properties if needed in the future. In fact, many of the parts of the theorem are derived first for $\HJ(\C,\sF)$ and then transferred to $\HJ(\mathring\C,\H)$.

This theorem assembles results from various parts of the paper.

\begin{proof}
Part~\eqref{i.main_cp} follows from Corollary~\ref{c.cp_C} with assumption~\ref{i.case2_C} (or assumption~\ref{i.case1_C} if $\H\lfloor_\C\in \Gamma^\nearrow_\mathrm{Lip}(\C)$).

Let us verify Part~\eqref{i.main_exist}. Applying Lemma~\ref{l.ext_loclip}, we can find $\sF\in \Gamma^\nearrow_\mathrm{Lip}(\cH)$ equal to $\H$ on $B(0,\|\psi\|_\mathrm{Lip})$. Proposition~\ref{p.exist_sol_H} yields a solution $f\in\mathfrak{L}$ of $\HJ(\mathring\C,\sF;\psi)$. Proposition~\ref{p.drop_bdy} implies that $f$ solves $\HJ(\C,\sF;\psi)$. Corollary~\ref{c.lipschitz} implies that $f$ satisfies \eqref{i.main_lip}, and thus $f\in\mathfrak{L}_\mathrm{Lip}$. Then, by Proposition~\ref{p.monotone}, $f$ belongs to $\mathfrak{M}$.
These along with Lemma~\ref{l.equiv} imply that $f$ solves $\HJ(\mathring\C,\H;\psi)$. The uniqueness in the main statement of Part~\eqref{i.main_exist} is ensured by Part~\eqref{i.main_cp}. Hence, the main statement of Part~\eqref{i.main_exist}, and \eqref{i.main_lip} have been verified.

If $\H\lfloor_\C\geq0$, we have $\sF\geq 0$ by Lemma~\ref{l.ext_loclip}. From Proposition~\ref{p.monotone_time}, we can deduce \eqref{i.main_t_nond}. For \eqref{i.main_equiv}, Lemma~\ref{l.equiv} and Proposition~\ref{p.drop_bdy} imply that $f$ solves $\HJ(\C,\sF;\psi)$, and Corollary~\ref{c.cp_interior} with assumption~\ref{i.case2_CF} (or assumption~\ref{i.case1_CF} if $\sF\in\Gamma^\nearrow_\mathrm{Lip}(\cH)$) ensures the uniqueness. Lastly, \eqref{i.main_var_rep} follows from Propositions~\ref{p.hopf-lax} and~\ref{p.hopf}.
\end{proof}

We present a concrete setting, to which Theorem~\ref{t} is applicable.
\begin{example}\label{example}
Let $D\in\N$ and let $\cH$ be the linear space of $D\times D$ real symmetric matrices. We equip $\cH$ with the Frobenius inner product, i.e., $\la a, b\ra = \sum_{k,k'=1}^D a_{kk'}b_{kk'}$ for $a,b\in\cH$. Let $\C$ be the cone of positive semi-definite matrices in $\cH$. Consider $\H(a)=|a|^2$ for all $a\in\cH$, whose gradient at $a$ is $2a$. Hence, on $\C$, the gradient of $\H$ belongs to $\C$ and thus $\H\lfloor_\C$ is $\C^*$-increasing.
\end{example}

\subsection{Organization of the paper}
We collect preliminary results in~Section~\ref{s.prelim} including a simplification of the boundary condition on $\partial \C$ given that the nonlinearity is monotone, and modifications of the nonlinearity. These results allow us to equivalently study $\HJ(\C,\sF)$ for $\sF$, a modification of $\H$, that possesses good properties on the entirety of $\cH$, instead of $\HJ(\mathring\C,\H)$ as concerned in Theorem~\ref{t}. Additionally, since no special treatment is needed for points on $\partial\C$, the argument to analyze $\HJ(\C,\sF)$ via the trick of doubling the constants is simpler. Henceforth, we will focus on $\HJ(\C,\sF)$, and transfer its properties to $\HJ(\mathring\C,\H)$ using results from Section~\ref{s.prelim}, as done in the proof of Theorem~\ref{t}.

For $\HJ(\C,\sF)$, we show its well-posedness in Section~\ref{s.well-posed}; the monotonicity of the solution in Section~\ref{s.monotone}; the Lipschitzness of the solution in Section~\ref{s.Lip}; and variational formulae for the solution in Section~\ref{s.var}.

\subsubsection{Additional notation and convention}
Throughout, for $a,b\in\R$, we write $a\vee b = \max\{a,b\}$. For $(t,x)$ varying in a subset of $\R\times\cH$, we refer to $t$ as the \textit{temporal} variable and $x$ as the \textit{spatial} variable. The derivative in the temporal variable and the derivative in the spatial variable are denoted by $\partial_t$ and $\nabla$, respectively. We often abbreviate the word ``respectively'' as ``resp.'' The symbol $\sF$ is always reserved for a map from $\cH$ to $\R$ that satisfies good properties on $\cH$, and $\H$ is for a map from $\cH$ to $\R$ that enjoys good properties only when restricted to $\C$.

\subsection*{Acknowledgement}
The authors thank Jean-Christophe Mourrat for pointing out important references \cite{crandall1985viscosity,souganidis1986remark} and for helpful comments. The authors thank Tomas Dominguez for finding a mistake in the application of Perron's method.
The authors thank the anonymous referee for comments that helped them greatly improve the manuscript.

\section{Preliminary results}\label{s.prelim}

We first show in Proposition~\ref{p.drop_bdy} that if a nonlinearity $\sF$ is $\C^*$-increasing, then a solution of $\HJ(\mathring\C, \sF)$ is a solution of $\HJ(\C,\sF)$. Note that a solution of $\HJ(\C,\sF)$ is obviously a solution of $\HJ(\mathring\C,\sF)$. Hence, given a monotone nonlinearity, the values on the boundary are not relevant for the study of $\HJ(\C,\sF)$. Hence, to see if $f$ solves $\HJ(\C,\sF)$, we only need to consider $(t,x)\in(0,\infty)\times\mathring\C$ at which $f-\phi$ achieves a local extremum for some smooth $\phi$. The condition that $x\in\mathring\C$ allows us to better control $\nabla \phi(t,x)$ using properties of $f$, which is very useful in other sections.

Next, we record results on modifications of nonlinearities. Note that in Theorem~\ref{t}, the nonlinearity $\H$ is only assumed to have good properties on $\C$. For our analysis in later sections, it is useful to modify $\H$ into $\sF$ possessing these properties on $\cH$. Lemma~\ref{l.ext_loclip} is the main one to be used. Although not to be used here, Lemma~\ref{l.mod_grow} gives a good modification if $\H$ grows to infinity, which is useful in many applications. Lastly, Lemma~\ref{l.equiv} gives conditions under which the solution of the modified equation is the same solution of the original one and vice versa, which is needed to show Theorem~\ref{t}~\eqref{i.main_equiv} (see its proof in Section~\ref{s.intro_main_result}).

\subsection{Simplifying boundary}

\begin{proposition}\label{p.drop_bdy}
Let $\sF\in \Gamma^\nearrow_\mathrm{cont}(\cH)$. If $f:\R_+\times\C\to\R$ is a continuous subsolution (resp., supersolution) of $\HJ(\mathring\C,\sF)$, then $f$ is a subsolution (resp., supersolution) of $\HJ(\C,\sF)$.
\end{proposition}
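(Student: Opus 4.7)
The strategy, as hinted by the reference to Crandall--Souganidis in the introduction, is to use the distance function $d(x) = \dist(x,\partial\C)$ as a penalty that pushes the contact points of test functions into $\mathring\C$, and then to absorb the extra gradient term produced by the penalty using the $\C^*$-monotonicity of $\sF$. I describe the sub-solution case; the super-solution case is entirely symmetric.

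Take $(t_0,x_0)\in(0,\infty)\times\C$ and a test function $\phi$, smooth on $(0,\infty)\times\C$, such that $f-\phi$ attains a local maximum at $(t_0,x_0)$. If $x_0\in\mathring\C$, the assumption that $f$ is a sub-solution of $\HJ(\mathring\C,\sF)$ immediately yields the desired inequality (just shrink the neighborhood to stay in $\mathring\C$). Otherwise $x_0\in\partial\C$; by the standard trick of adding $|t-t_0|^2+|x-x_0|^2$ to $\phi$, we may assume that the local maximum is strict. For $\varepsilon>0$, consider the perturbed test function
\[
    \phi_\varepsilon(t,x)=\phi(t,x)+\frac{\varepsilon}{d(x)},\qquad x\in\mathring\C.
\]
Since $d(x)\to0$ as $x\to\partial\C$, we have $f-\phi_\varepsilon\to-\infty$ there, so on a sufficiently small closed neighborhood of $(t_0,x_0)$ in $\R_+\times\C$ the supremum of $f-\phi_\varepsilon$ is attained at some $(t_\varepsilon,x_\varepsilon)\in(0,\infty)\times\mathring\C$; strictness of the original maximum forces $(t_\varepsilon,x_\varepsilon)\to(t_0,x_0)$ as $\varepsilon\to0$. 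Applying the sub-solution property of $f$ for $\HJ(\mathring\C,\sF)$ at the interior point $(t_\varepsilon,x_\varepsilon)$, and noting $\partial_t\phi_\varepsilon=\partial_t\phi$ since the penalty is spatial only, yields
\[
    \partial_t\phi(t_\varepsilon,x_\varepsilon)\le\sF\bigl(\nabla\phi_\varepsilon(t_\varepsilon,x_\varepsilon)\bigr).
\]
The key geometric input, drawn from the representation~\eqref{e.d_c=dist} of $d$ as an infimum of linear functionals $\la e,\cdot\ra$ over unit $e\in\C^*$, is that $\nabla d$ lies in $\C^*$ at any differentiability point. Hence $\nabla\phi-\nabla\phi_\varepsilon=\varepsilon\nabla d/d^2\in\C^*$ at $(t_\varepsilon,x_\varepsilon)$, and the $\C^*$-monotonicity of $\sF$ gives $\sF(\nabla\phi)\ge\sF(\nabla\phi_\varepsilon)$ there. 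Combining with the previous display gives $\partial_t\phi-\sF(\nabla\phi)\le0$ at $(t_\varepsilon,x_\varepsilon)$, and continuity of $\partial_t\phi$, $\nabla\phi$, and $\sF$ preserves this inequality in the limit $\varepsilon\to0$ at $(t_0,x_0)$. The super-solution case uses $\phi_\varepsilon=\phi-\varepsilon/d$, so that $f-\phi_\varepsilon\to+\infty$ at $\partial\C$ forces local minima into $\mathring\C$; the sign of the gradient perturbation flips, and the $\C^*$-monotonicity of $\sF$ again yields the correct inequality in the limit.

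The main technical obstacle is that $d$ is only Lipschitz and concave on $\mathring\C$, not classically smooth, so $1/d$ is not literally an admissible test function. I would resolve this by replacing $d$ with a smooth approximation $d_\eta$, built as a weighted average (softmin or mollification) of the family $\{\la e,\cdot\ra\}_{e\in\C^*,|e|=1}$; its gradient, as a convex average of elements of $\C^*$, still lies in $\C^*$, and $d_\eta\to d$ locally uniformly on $\mathring\C$. The argument goes through with $d_\eta$ in place of $d$, sending $\eta\to0$ together with (or before) $\varepsilon\to0$ to recover the conclusion.
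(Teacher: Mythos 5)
Your broad strategy is the same as the paper's: penalize the test function by $\eps/d_\C$ to force contact points into $\mathring\C$, then use $\nabla d_\C\in\C^*$ (Lemma~\ref{l.d}~\eqref{i.Dd}) together with the $\C^*$-monotonicity of $\sF$ to absorb the extra gradient term without worsening the sign. You also correctly identify the central technical obstacle, namely that $d_\C$ is only Lipschitz and concave, so $\phi+\eps/d_\C$ is not an admissible test function.

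Where you diverge from the paper is in how that obstacle is resolved, and this is where there is a genuine gap. The paper does \emph{not} smooth $d_\C$; it doubles variables. In the penalized functional
\[
\Phi(t,x,s,y) = f(t,x) - \phi(s,y) - \frac{\eps}{d_\C(y)} - \frac{2M}{\theta^2}|(t,x)-(s,y)|^2 + \delta\zeta_\eps(s,y),
\]
the non-smooth term $\eps/d_\C(y)$ lives only in the $(s,y)$ block, so the map $(t,x)\mapsto\Phi(t,x,s_0,y_0)$ is $f$ minus a genuinely smooth function of $(t,x)$ and the subsolution property may be invoked directly; the $(s,y)$ block involves only the smooth $\phi$ and the concave $d_\C$, so ordinary first-order optimality and the superdifferential bound from Lemma~\ref{l.d}~\eqref{i.g-1/d}--\eqref{i.Dd} apply. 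No regularization of $d_\C$ is needed.

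Your alternative, smoothing $d_\C$ by a softmin $d_\eta$ of the linear functionals $\la e,\cdot\ra$, is plausible but not a routine substitution. A softmin underestimates the infimum, so $d_\eta<d_\C$ and in particular $d_\eta\le 0$ on a full neighborhood of $\partial\C$ inside $\C$ (including at $x_0$ itself, where $d_\C(x_0)=0$). Consequently $1/d_\eta$ is only defined on the sublevel set $\{d_\eta>0\}\subsetneq\mathring\C$, and for \emph{fixed} $\eta$ the maximizer of $f-\phi-\eps/d_\eta$ stays in this set and therefore does not converge to $(t_0,x_0)$ as $\eps\to0$; conversely, sending $\eta\to0$ first just reproduces the non-smooth problem. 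You would need to couple $\eta$ to $\eps$ quantitatively (using the rate of uniform convergence $d_\eta\to d_\C$ on compacts, the strictness modulus of the maximum of $f-\phi$, and the geometry of $\{d_\eta>0\}$) before the claim that ``the argument goes through'' is justified. As written, this is where the proof is incomplete. The doubling of variables in the paper avoids the issue entirely, at the cost of the extra cutoff $\zeta_\eps$ and the $\theta,\delta$ bookkeeping, but it is fully rigorous.

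Two small additional remarks. First, the paper works directly with the formula $d_\C(x)=\inf_{y\in\C^*,|y|=1}\la y,x\ra$; the identification with $\dist(\cdot,\partial\C)$ is incidental and not used in the proof, so you need not rely on it either. Second, the trick of adding $|t-t_0|^2+|x-x_0|^2$ to make the maximum strict is fine; the paper uses the same device (in the form~\eqref{e.bar_t,x_max}).
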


As aforementioned, the distance function $\mathsf{dist} (\cdot,\partial \C)$ from \cite{crandall1985viscosity,souganidis1986remark} has a nice representation in our setting:
\begin{align}\label{e.d}
    d_\C(x) = \inf_{y\in\C^*,\ |y|=1}\la y, x\ra,\quad x \in \C.
\end{align}
We will directly work with this expression.
In an early version, we guessed but did not know how to prove
\begin{align}\label{e.d_c=dist}
    d_\C(x) = \dist (x,\partial \C),\quad  x\in \C.
\end{align}
In the review process, the referee provided a proof.
We briefly present it here.
\begin{proof}[Proof of~\eqref{e.d_c=dist}]
Fix any $x\in\C$. Let $y_0$ be the minimizer in~\eqref{e.d} and let $z$ be the projection of $x$ to $\{z'\in\cH:\la z',y_0\ra=0\}$.
Due to $|y_0|=1$, we can verify $x-z=\la y_0,x\ra y_0$. Now, for every $y\in \C^*$ with $|y|=1$, we have $\la y,z\ra=\la y,x\ra -\la y_0,x\ra\la y,y_0\ra \geq 0$, where we used the minimality of $y_0$ and $\la y,y_0\ra\leq 1$. Hence, $z\in \C$. Due to $\la z, y_0\ra=0$ and $y_0\in\C^*$, we can verify $z\in\partial \C$. Since $|x-z|=\la y_0,x\ra$, we get $\dist(x,\partial\C)\leq d_\C(x)$.
We turn to the other direction. Choose $z_0\in \partial \C$ to satisfy $|x-z_0| = \dist(x,\partial \C)$. There must be some $y\in\C^*$ with $|y|=1$ such that $\la y,z_0\ra=0$ (because otherwise one can show $z_0\in\mathring\C$). Now, $\la y,x\ra=\la y,x-z_0\ra \leq |x-z_0|$, which implies $d_\C(x) \leq \dist(x,\partial\C)$.
\end{proof}

Before stating properties of $d_\C$, we need some definition.
For a function $g:\cD\to\R$ defined on a subset $\cD\subset\cH$, the superdifferential of $g$ at $x\in\mathring\cD$, denoted by $D^+g(x)$, is defined to be the set of all $p\in \cH$ satisfying
\begin{align*}
    g(y) - g(x) \leq \la p, y-x\ra + o(y-x)
\end{align*}
as $y\to x$ within $\cD$.

\begin{lemma}\label{l.d}
The following holds for $d=d_\C$ defined in \eqref{e.d}:
\begin{enumerate}
\item\label{i.d(x)=0} for $x\in\C$, $d(x)=0$ if and only if $x\in \partial \C$;

\item\label{i.d_concave} $d$ is concave and Lipschitz;
\item\label{i.Dd} $D^+d(x)\subset \C^*\cap B(0,1)$ for every $x\in\mathring\C$;

\item\label{i.g-1/d} if $x\mapsto g(x) - \frac{1}{d(x)}$ achieves a local maximum at $x_0\in \mathring \C$ for some smooth function $g$, then $-(d(x_0))^2\nabla g(x_0)\in D^+d(x_0)$.
\end{enumerate}
\end{lemma}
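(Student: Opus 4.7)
The plan is to exploit the representation of $d$ as the pointwise infimum of linear functionals $y\mapsto\la y,x\ra$ over the compact set $K:=\C^*\cap\{y\in\cH:|y|=1\}$. From this alone, $d$ is automatically concave (as an infimum of linear functions), and the estimate $|d(x)-d(x')|\leq |x-x'|$ follows uniformly in $y\in K$ by Cauchy--Schwarz, settling~\eqref{i.d_concave}. For~\eqref{i.d(x)=0}, I would argue the two implications separately: if $x\in\mathring\C$, then $B(x,r)\subset\C$ for some $r>0$ and the vector $x-ry$ lies in $\C$ for each $y\in K$, giving $\la y,x\ra\geq r$ and hence $d(x)\geq r>0$; conversely, if $x\in\partial\C$, the supporting-hyperplane theorem furnishes a unit vector $y_0\in\C^*$ with $\la y_0,x\ra=0$, so $d(x)\leq 0$, and combined with $d(x)\geq 0$ on $\C$ (which holds because $\C=(\C^*)^*$) gives $d(x)=0$.

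For~\eqref{i.Dd}, the key observation is that $d$ is $\C$-increasing: whenever $w\in\C$, $x\in\C$, and $t\geq 0$, one has $\la y,x+tw\ra\geq \la y,x\ra$ for every $y\in\C^*$, so passing to the infimum yields $d(x+tw)\geq d(x)$. Given $p\in D^+d(x_0)$ at an interior point $x_0$, the defining superdifferential inequality applied along the ray $x_0+tw$ for $w\in\C$ and small $t>0$ gives $d(x_0+tw)-d(x_0)\leq t\la p,w\ra+o(t)$; combined with the $\C$-monotonicity just noted, this forces $\la p,w\ra\geq 0$ for every $w\in\C$, and therefore $p\in(\C)^*=\C^*$. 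The bound $|p|\leq 1$ is inherited from the $1$-Lipschitz property of $d$ established in~\eqref{i.d_concave}.

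Finally, for~\eqref{i.g-1/d}, part~\eqref{i.d(x)=0} ensures $d(x_0)>0$, so $1/d$ is well-defined and continuous near $x_0$. The local maximality of $g-1/d$ at $x_0$ rearranges to $\frac{d(x)-d(x_0)}{d(x)\,d(x_0)}\leq g(x_0)-g(x)$ for $x$ near $x_0$; using continuity of $d$ to write $d(x)\,d(x_0)=d(x_0)^2+o(1)$, multiplying through, and Taylor-expanding $g$ at $x_0$ then yields $d(x)-d(x_0)\leq -d(x_0)^2\la\nabla g(x_0),x-x_0\ra + o(|x-x_0|)$, which is exactly the defining inequality for $-d(x_0)^2\nabla g(x_0)\in D^+d(x_0)$. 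The only mildly subtle ingredient in the whole proof is the $\C$-monotonicity of $d$ underlying~\eqref{i.Dd}; everything else reduces to direct manipulation of the infimum formula, continuity, and the supporting-hyperplane theorem.
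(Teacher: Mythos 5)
Your proof is correct and follows essentially the same strategy as the paper's for all four parts: concavity from the infimum-of-linear-functionals representation, $\C$-monotonicity of $d$ to pin $D^+d(x)\subset\C^*$, the $1$-Lipschitz bound for the ball constraint, and rearrangement of the maximality inequality combined with a Taylor expansion for part~\eqref{i.g-1/d}. Two of your sub-arguments are in fact a touch cleaner than the paper's: for the Lipschitz bound in~\eqref{i.d_concave} you pass the estimate $|\la y,x\ra-\la y,x'\ra|\leq|x-x'|$ uniformly through the infimum and so never need the attainment of the infimum (which the paper invokes via finite-dimensionality), and in~\eqref{i.d(x)=0} you use the supporting-hyperplane theorem for the $\partial\C$ direction rather than arguing the contrapositive; both are fine and entirely in the same spirit.
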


\begin{proof}
Part~\eqref{i.d(x)=0}.
Let $x\notin \partial \C$. Then, there is $\eps>0$ such that $x-\epsilon y\in \C$ for all $y\in \C^*$ satisfying $|y|=1$. Since $\C = (\C^*)^*$, we have that, for all $y\in \C^*$ satisfying $|y|=1$,
\begin{align*}
    \la y,x\ra-\eps = \la y, x-\epsilon y\ra \geq 0,
\end{align*}
implying that $d(x)>0$. 

For the other direction, assume $d(x)>\eps$ for some $\eps>0$. Then, for any $y\in\C^*$ satisfying $|y|=1$ and any $y'\in B(0,1)$, we have $\la y, x\ra>\eps$ and then
\begin{align*}
    \la y, x-\eps y'\ra \geq \la y, x\ra -\eps>0.
\end{align*}
This implies that $x-\eps y'\in \C$ for all $y'\in B(0,1)$, and thus $x\notin\partial\C$.

Part \eqref{i.d_concave}.
As an infimum over linear functions, $d$ is concave.
Since $\cH$ is finite-dimensional, the infimum in $d$ is achieved. Let $x_1,x_2\in\C$. Then, there is $y_1\in \C^*$ with $|y_1|=1$ such that
\begin{align*}
    d(x_1)=\la y_1,x_1\ra \leq d(x_2) + \la y_1, x_1-x_2\ra\leq d(x_2)+|x_1-x_2|
\end{align*}
implying that $d$ is Lipschitz with coefficient $1$.

Part \eqref{i.Dd}. 
We first notice that $d(x)$ is $\C$-increasing (meaning that $d(x)\leq d(x')$ if $x'-x\in\C$). Fix any $x\in\mathring\C$. Since $d$ is concave, we know that $D^+d(x)$ is nonempty. Let $p\in D^+ d(x)$ be arbitrary. For any $z\in\C$ and $\epsilon>0$ small enough, we have $x+\epsilon z \in\C$ and
\begin{align*}
    \la p, \epsilon z\ra \geq d(x+\epsilon z)-d(x)-o(\epsilon |z|)\geq -o(\epsilon |z|).
\end{align*}
Therefore, sending $\epsilon\rightarrow 0$, we obtain $\la p,z\ra\geq 0$ for all $z\in \C$, which implies that $p\in \C^*$ by duality and thus $D^+d(x)\subset \C^{*}$. 

Then, we show that $D^+d(x) \subset B(0,1)$. Recall that we have shown in \eqref{i.d_concave} that $d$ is Lipschitz with coefficient $1$, which implies that
\begin{align*}
    \la p, -\epsilon z \ra+o(-\epsilon |z|)&\geq d(x-\epsilon z)-d(x)\geq -\epsilon|z|,
\end{align*}
for all $z\in B(0,1)$, $p\in D^+ d(x)$ and $\epsilon>0$ small.
Sending $\eps\to0$, we have $\la p, z\ra\leq |z|$. Therefore $D^+d(x)\subset B(0,1)$.

Part~\eqref{i.g-1/d}. 
Due to $x_0\in\mathring \C$, we have $x_0+z\in\C$ for sufficiently small $z$.
By maximality at $x_0$, we have
\begin{align*}
    g(x_0)-\frac{1}{d(x_0)} &\geq g(x_0+ z)-\frac{1}{d(x_0+ z)}.
\end{align*}
Rearranging terms, we get
\begin{align*}
    -\left(g(x_0+ z)-g(x_0)\right)d(x_0)d(x_0+ z)&\geq d(x_0+ z)-d(x_0).
\end{align*}
Using the smoothness of $g$ and the Lipschitzness of $d$, the left-hand side is
\begin{align*}
    -\la \nabla g(x_0), z\ra \big(d(x_0)\big)^2 + o(|z|).
\end{align*}
Comparing this with the definition of $D^+ d(x_0)$, we can see that $-\big(d(x_0)\big)^2\nabla g(x_0)\in D^+ d(x_0)$ as desired.
\end{proof}
\begin{proof}[Proof of Proposition~\ref{p.drop_bdy}]
We only prove the proposition for $f$ assumed to be a subsolution. The case for supersolutions is similar.

Let $\phi:(0,\infty)\times\C$ be a smooth function and assume that $f-\phi$ has a local maximum at $(\underline t, \underline x)\in (0,\infty)\times \C$. If $\underline x\in \mathring\C$, there is nothing to prove. Hence, we assume $\underline x \in \partial \C$. For $r\in(0,\underline t)$, we set $A(r) = [\underline t - r,\underline t+r] \times (\C\cap B(\underline x,r))$.

We can further assume that the maximum is strict, namely,
\begin{align}\label{e.bar_t,x_max}
    f(t,x)-\phi(t,x)<f(\underline t,\underline x) - \phi(\underline t,\underline x),\quad\forall (t,x) \in A(r_0)\setminus \{(\underline t, \underline x)\}
\end{align}
for some sufficiently small $r_0$.

\textit{Step~1.}
Let $d=d_\C$ be given in \eqref{e.d}. For each $\eps>0$, we set
\begin{align}\label{e.Psi_eps}
    \Psi_\eps(t,x) = f(t,x) - \phi(t,x) - \frac{\eps}{d(x)},\quad\forall (t,x)\in \R_+\times \C.
\end{align}
We want to show that $\Psi_\eps$ has a local maximum at $(s_\eps, y_\eps)\in (0,\infty)\times \mathring\C$. 
Since $\Psi_\eps$ is upper semi-continuous with values in $\R\cup \{-\infty\}$, there is $(s_\eps,y_\eps)\in A(r_0)$ such that
\begin{align}\label{e.s_eps,y_eps_max}
    \Psi_\eps(s_\eps,y_\eps) = \sup_{A(r_0)} \Psi_\eps.
\end{align}
Due to the presence of $\frac{\eps}{d(\cdot)}$, we must have $y_\eps \in \mathring \C$.
Setting
\begin{align*}
    O = (\underline t - r_0, \underline t+r_0)\times (\C \cap \itr B(\underline x , r_0)),
\end{align*}
we want to show that $(s_\eps,y_\eps) \in O$ for sufficiently small $\eps$. For every $r\in(0,r_0)$, the continuity of $f-\phi$ and \eqref{e.bar_t,x_max} allow us to find $(t_r,x_r) \in A(r)\cap ((0,\infty)\times \mathring \C)$ such that
\begin{align*}
    f(s,y)-\phi(s,y)<f(t_r,x_r) - \phi(t_r,x_r),\quad\forall (s,y)\in A(r_0)\setminus A(r).
\end{align*}
Hence, for each $r\in(0,r_0)$, there is $\eps_r>0$ such that
\begin{align*}
    \Psi_\eps(s,y) < \Psi_\eps(t_r,x_r), \quad\forall (s,y)\in A(r_0)\setminus A(r),\ \forall \eps <\eps_r,
\end{align*}
which implies that
\begin{align}\label{e.(s_eps,y_eps)}
    (s_\eps, y_\eps)\in  A(r) \subset O,\quad\forall \eps <\eps_r.
\end{align}
Moreover, first taking any subsequential limit of $\eps \to 0$, and then sending $r\to0$, we deduce from \eqref{e.(s_eps,y_eps)} that
\begin{align}\label{e.lim(s_eps,y_eps)}
    \lim_{\eps\to 0}(s_\eps, y_\eps) = (\underline t, \underline x).
\end{align}

\textit{Step~2.}
For each $\eps>0$, let $\zeta_\eps:\R\times \cH\to\R$ be a smooth function compactly supported on $O$, satisfying
\begin{gather}
    0\leq \zeta_\eps\leq 1,\qquad\zeta_\eps(s_\eps,y_\eps) =1, \label{e.zeta_eps_prop}
    \\
    \supp\zeta_\eps\subset (0,\infty)\times \mathring\C, \label{e.zeta_eps_supp}
\end{gather}
where $\supp\zeta_\eps$ is the compact support of $\zeta_\eps$.
Then, for each $\delta,\theta>0$, we define
\begin{align*}
    \Phi(t,x,s,y) = f(t,x) - \phi(s,y) - \frac{\eps}{d(y)} - \frac{2M}{\theta^2}|(t,x)-(s,y)|^2+ \delta \zeta_\eps(s,y),\quad\forall (t,x,s,y)\in O\times O,
\end{align*}
where $M= \sup_{(t,x)\in\bar O}|f(t,x)|$.
We show that for $\theta$ sufficiently small, there is $(t_0,x_0,s_0,y_0)\in O\times O$ depending on $\theta,\delta, \eps$ such that
\begin{align}\label{e.Phi(t_0,x_0,s_0,y_0)_max}
    \Phi(t_0,x_0,s_0,y_0) = \max_{O\times O}\Phi(t,x,s,y).
\end{align}
Let $\omega_f$ denote the modulus of continuity for $f$ on $\overline O$. Using the definition of $M$, and considering two cases depending on $|(t,x)-(s,y)|\leq \theta$ or otherwise, we can get
\begin{align*}
    \Phi(t,x,s,y)\leq \omega_f(\theta)+f(s,y)-\phi(s,y)-\frac{\eps}{d(y)}+\delta\zeta_\eps(s,y),\quad\forall (t,x,s,y)\in O\times O.
\end{align*}
Using this, \eqref{e.s_eps,y_eps_max}, \eqref{e.zeta_eps_prop}, and the definition of $\Phi$, we get
\begin{align*}
    \Phi(t,x,s,y)\leq
    \begin{cases}
    \Phi(s_\eps,y_\eps, s_\eps,y_\eps)+\omega_f(\theta) -\delta, \quad &\text{if }(t,x,s,y)\in O\times (O\setminus \supp\zeta_\eps),
    \\
    \Phi(s_\eps,y_\eps, s_\eps,y_\eps)+\omega_f(\theta), \quad &\text{if }(t,x,s,y)\in O\times  \supp\zeta_\eps.
    \end{cases}
\end{align*}

Let $((t_n,x_n,s_n,y_n))_{n\in\N}$ be a sequence maximizing $\Phi$. Henceforth, let $\theta$ be sufficiently small so that $\omega_f(\theta)<\delta$. Then, the above display implies that, for sufficiently large $n$,
\begin{align}\label{e.(s_n,y_n)_in_supp}
    (s_n,y_n)\in \supp\zeta_\eps.
\end{align}
Note that if $|(t_n,x_n) - (s_n,y_n)|>\theta$, then we can verify using the definition of $M$ that
\begin{align*}
    \Phi(t_n,x_n,s_n,y_n)\leq f(t_n,x_n) - \phi(s_n,y_n) - \frac{\eps}{d(y_n)} - 2M+ \delta \zeta_\eps(s_n,y_n)\leq \Phi(s_n,y_n,s_n,y_n).
\end{align*}
Hence, by replacing $(t_n,x_n)$ by $(s_n,y_n)$ if necessary, we may assume that, for sufficiently large $n$,
\begin{align*}
    |(t_n,x_n)-(s_n,y_n)|\leq \theta.
\end{align*}
This along with \eqref{e.(s_n,y_n)_in_supp} allow us to pass to the limit in $n$ to deduce the existence of a maximizer in \eqref{e.Phi(t_0,x_0,s_0,y_0)_max}, which satisfies
\begin{align}\label{e.(t_0,x_0)-(s_0,y_0)}
    |(t_0,x_0)-(s_0,y_0)|\leq \theta.
\end{align}
Since $\supp\zeta_\eps\subset O$ is compact, for sufficiently small $\theta$, we can ensure by \eqref{e.(t_0,x_0)-(s_0,y_0)} that $(t_0,x_0,s_0,y_0)\in O\times O$. Using \eqref{e.zeta_eps_supp}, \eqref{e.(s_n,y_n)_in_supp} (taking $n\to\infty$ therein) and \eqref{e.(t_0,x_0)-(s_0,y_0)}, we have that $(t_0,x_0)\in (0,\infty)\times \mathring\C$ and $(s_0,y_0)\in (0,\infty)\times \mathring\C$ for sufficiently small $\theta$.

\textit{Step~3.}
Since the function $(t,x)\mapsto\Phi(t,x,s_0,y_0)$ achieves its maximum over $O$ at $(t_0,x_0)\in (0,\infty)\times \mathring\C$, the assumption that $f$ is a subsolution to $\HJ(\mathring\C,\sF)$ implies that
\begin{align}\label{e.conseq_subsol}
    \frac{4M}{\theta^2}(t_0-s_0) -\sF\left(\frac{4M}{\theta^2}(x_0-y_0)\right)\leq 0.
\end{align}
On the other hand, since the function $(s,y)\mapsto\Phi(t_0,x_0,s,y)$ achieves its maximum at an interior point $(s_0,y_0)$, we can compute
\begin{gather*}
    \partial_t\phi(s_0,y_0)-\frac{4M}{\theta^2}(t_0-s_0)-\delta\partial_t\zeta_\eps(s_0,y_0)=0,
    \\
    \frac{1}{\eps}\left(d(y_0)\right)^2\left(\nabla \phi(s_0,y_0)-\frac{4M}{\theta^2}(x_0-y_0)-\delta \nabla \zeta_\eps(s_0,y_0)\right)\in D^+d(y_0), \notag
\end{gather*}
where the second relation follows from Lemma~\ref{l.d}~\eqref{i.g-1/d}.
By Lemma~\ref{l.d}~\eqref{i.Dd}, we obtain from the second relation that
\begin{align*}
    \frac{4M}{\theta^2}(x_0-y_0) =\nabla \phi(s_0,y_0)-\delta \nabla \zeta_\eps(s_0,y_0)-p
\end{align*}
for some $p\in \C^*$. Using this, \eqref{e.conseq_subsol} and the assumption that $\sF$ is $\C^*$-increasing, we obtain
\begin{align}\label{e.d_tphi-H...}
    \partial_t\phi(s_0,y_0)-\delta\partial_t\zeta_\eps(s_0,y_0)-\sF\left(\nabla \phi(s_0,y_0)-\delta \nabla \zeta_\eps(s_0,y_0)\right)\leq 0.
\end{align}

\textit{Step~4.}
Recall that $(s_0,y_0)$ depends on $\theta,\delta,\eps$ and $\zeta_\eps$ only depends on $\eps$. We claim that
\begin{align}\label{e.limlimlim}
    \lim_{\eps\to0}\lim_{\delta\to0}\lim_{\theta\to0}(s_0,y_0)=(\underline t,\underline x),
\end{align}
along some subsequence.
Assuming this, and passing \eqref{e.d_tphi-H...} to the limits in the same order as in \eqref{e.limlimlim}, we arrive at
\begin{align*}
    \partial_t\phi(\underline t,\underline x) - \sF\left(\nabla\phi\left(\underline t,\underline x\right)\right)\leq 0,
\end{align*}
which completes our proof that $f$ is a subsolution on $(0,\infty)\times \C$.

It remains to show \eqref{e.limlimlim}. Due to \eqref{e.(t_0,x_0)-(s_0,y_0)}, we have
\begin{align*}
    \lim_{\theta\to0}(t_0,x_0) = \lim_{\theta\to0}(s_0,y_0) = (s_1,y_1)\in \supp\zeta_\eps
\end{align*}
for some $(s_1,y_1)$ depending on $\delta,\eps$. Using \eqref{e.Phi(t_0,x_0,s_0,y_0)_max}, we have $\Phi(t_0,x_0,s_0,y_0)\geq \sup_{(t,x)\in O} \Phi(t,x,t,x)$ and thus
\begin{align*}
    f(t_0,x_0)-\phi(s_0,y_0)-\frac{\eps}{d(y_0)}+\delta\zeta_\eps(s_0,y_0)\geq f(t,x)-\phi(t,x)-\frac{\eps}{d(x)}+\delta\zeta_\eps(t,x),\quad\forall (t,x)\in O.
\end{align*}
Let $(s_2,y_2)\in\supp\zeta_\eps$ be a subsequential limit of $(s_1,y_1)$ as $\delta\to 0$. So, $(s_2,y_2)$ only depends on $\eps$. Hence, sending $\theta\to0$ and then $\delta\to0$, we obtain from the above display that
\begin{align*}
    f(s_2,y_2)-\phi(s_2,y_2)-\frac{\eps}{d(y_2)}\geq f(t,x)-\phi(t,x)-\frac{\eps}{d(x)},\quad\forall (t,x)\in O.
\end{align*}
Hence, $(s_2,y_2)$ maximizes $\Psi_\eps$ introduced in \eqref{e.Psi_eps} over $O$. By the same argument used to derive \eqref{e.lim(s_eps,y_eps)}, we have that $\lim_{\eps\to0}(s_2,y_2)= (\underline t, \underline x)$.
\end{proof}
\subsection{Modifications of the nonlinearity}

Recall that $\C$ is assumed to satisfy \ref{i.assump_C} throughout.
We need the following result for the construction of modifications of the nonlinearity and proofs involving the trick of doubling variables.
\begin{lemma}\label{l.cone_itr}
Under \ref{i.assump_C}, $\itr(\C\cap\C^*)\neq \emptyset$.
\end{lemma}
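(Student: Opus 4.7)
The plan is to prove the slightly stronger statement $\mathring\C \cap \mathring{\C^*} \neq \emptyset$, from which the lemma follows immediately since $\itr(A\cap B) = \mathring A \cap \mathring B$ for arbitrary subsets $A,B\subset\cH$.

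As a preliminary, I would record that $\mathring{\C^*}\neq \emptyset$. If $\C^*$ were contained in a proper linear subspace of $\cH$, there would exist $v\in\cH\setminus\{0\}$ orthogonal to $\C^*$; but then $\la \pm v,y\ra=0\geq 0$ for every $y\in\C^*$ would place both $v$ and $-v$ in $(\C^*)^*=\C$, contradicting the pointedness assumed in~\ref{i.assump_C}. (Here I use the identity $(\C^*)^*=\C$ from Section~\ref{s.intro_main_result}.)

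The main step is a separation argument. With $\mathring\C\neq\emptyset$ granted by~\ref{i.assump_C} and $\mathring{\C^*}\neq\emptyset$ by the previous paragraph, assume for contradiction that $\mathring\C\cap\mathring{\C^*}=\emptyset$. Both sets are non-empty, open, and convex (the interior of a convex set is convex), so a Hahn--Banach separation yields $v\in\cH\setminus\{0\}$ and $c\in\R$ with
\begin{align*}
\la v,x\ra \leq c \leq \la v,y\ra,\quad \forall\, x\in\mathring\C,\ \forall\, y\in\mathring{\C^*}.
\end{align*}
Since $tx\in\mathring\C$ and $ty\in\mathring{\C^*}$ for every $t>0$, replacing $x,y$ by their positive rescalings in the displayed inequality and sending $t\to 0^+$ forces $c=0$, while sending $t\to\infty$ rules out $\la v,x\ra>0$ and $\la v,y\ra<0$. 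Thus $\la v,x\ra\leq 0$ on $\mathring\C$ and $\la v,y\ra\geq 0$ on $\mathring{\C^*}$. Using that $\mathring\C$ and $\mathring{\C^*}$ are dense in $\C$ and $\C^*$ respectively (both being closed convex sets with non-empty interior), these inequalities extend to $\C$ and $\C^*$, yielding $-v\in\C^*$ and $v\in(\C^*)^*=\C$.

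The contradiction now closes: from $v\in\C$ and $-v\in\C^*$, the defining inequality of the dual cone gives $\la v,-v\ra\geq 0$, i.e.\ $|v|^2\leq 0$, forcing $v=0$. I do not anticipate any real obstacle; the only mildly delicate point is arranging the separating hyperplane to pass through the origin, but this is an elementary consequence of the scale-invariance of the open cones, and the rest of the argument is pure duality.
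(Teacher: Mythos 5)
Your proof is correct, but it takes a genuinely different route than the paper's. The paper reduces the claim to showing that $\C+\C^*$ is pointed: it invokes the identity $(\C\cap\C^*)^*=\C+\C^*$ (quoted from Bauschke--Combettes) together with the observation that, in finite dimensions, a closed convex cone is pointed if and only if its dual has nonempty interior, and then verifies pointedness of $\C+\C^*$ by a short algebraic computation (taking the inner product of $a+b=-(x^*+y^*)$ with $a+b$). You instead argue directly: assuming $\mathring\C\cap\mathring{\C^*}=\emptyset$, you separate the two nonempty open convex sets by a hyperplane, exploit the homogeneity of cones to force the separating constant to $0$, pass from the open cones to their closures, and land on a vector $v$ with $v\in\C$ and $-v\in\C^*$, which forces $v=0$ and gives the contradiction. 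Both arguments rely on finite-dimensionality (you use it for ``nonempty relative interior equals nonempty interior once the span is everything'' and for the separation theorem; the paper uses it for the pointed/full-dimensional equivalence). Your version is self-contained and does not need the identity $(\C\cap\C^*)^*=\C+\C^*$, which is appealing; the paper's version stays inside the cone-duality calculus it uses throughout. One small stylistic remark: you could have phrased your preliminary step as the same ``pointed iff dual full-dimensional'' observation the paper records, since your argument for $\mathring{\C^*}\neq\emptyset$ is exactly one direction of it; this would make the parallel between the two proofs clearer.
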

\begin{proof}
It is easy to see that in finite dimensions, a nonempty closed convex cone is pointed if and only if its dual cone has a nonempty interior (equivalently, full-dimensional). Indeed, if the cone is not pointed, then it contains a line through the origin which must be orthogonal to its dual cone, implying that its dual cone is not full-dimensional. The converse is straightforward. An immediate consequence of this is that, under \ref{i.assump_C}, $\C^*$ must be pointed and has a nonempty interior.

By \cite[Proposition~6.26]{bauschke2011convex} (stated for the \textit{polar cone} which differs from the dual cone by a minus sign), we have $(\C\cap\C^*)^* = \C^* + (\C^*)^*= \C^*+\C$. Hence, to show $\itr(\C\cap\C^*)\neq \emptyset$, it suffices to show that $\C^*+\C$ is pointed. Suppose otherwise. Then, there are $a,b\in\C$ and $x^*,y^*\in \C^*$ such that $a+x^* = -(b+y^*)$. Rearranging and multiplying both sides by $a+b$, we get $0\leq |a+b|^2 = -\la a+b, x^*+y^*\ra \leq 0$. So, we must have $a = -b$, and, in a similar way, $x^*=-y^*$. Since $\C$ and $\C^*$ are pointed, we get $a=b=x^*=y^*=0$, which shows that $\C+\C^*$ is pointed.
\end{proof}

We are ready to construct modifications.

\begin{lemma}[Construction of modifications]\label{l.ext_loclip}
Let $\H:\cH\to\R$ be a function.
\begin{enumerate}
    \item \label{i.ext_lip} If $\H\lfloor_{\C}\in\Gamma^\nearrow_\mathrm{Lip}(\C)$, then there is $\sF\in \Gamma^\nearrow_\mathrm{Lip}(\cH)$ satisfying $\sF\lfloor_{\C} = \H\lfloor_{\C}$ and $\|\sF\|_\mathrm{Lip}= \|\H\lfloor_\C\|_\mathrm{Lip}$.
    \item \label{i.ext_loclip} If 
$\H\lfloor_\C\in \Gamma^\nearrow_\mathrm{locLip}(\C)$, then, for every $R>0$, there is $\sF\in \Gamma^\nearrow_\mathrm{Lip}(\cH)$ satisfying $\sF\lfloor_{\C\cap B(0,R)} = \H\lfloor_{\C\cap B(0,R)}$.
\end{enumerate}
Moreover, in both cases, the construction $\sF$ satisfies the following:
\begin{itemize}
    \item if $\H\lfloor_\C$ is convex, then $\sF$ is convex;
    \item if $\H\lfloor_\C\geq 0$, then $\sF\geq 0$.
\end{itemize}
\end{lemma}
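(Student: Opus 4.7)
The strategy is to prove Part~(1) via a two-step construction --- first a standard Lipschitz extension to $\cH$, then a ``$\C^*$-monotone lower envelope'' --- and then derive Part~(2) by running the same construction after restricting to a sufficiently large ball, using the local Lipschitzness of $\H$.

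For Part~(1), let $L = \|\H\lfloor_\C\|_\mathrm{Lip}$. I would first form the McShane--Whitney extension
\[
    \sF_1(x) = \inf_{y \in \C}\{\H(y) + L|y - x|\},\quad x \in \cH,
\]
which is $L$-Lipschitz on $\cH$ and agrees with $\H$ on $\C$ (by the Lipschitzness of $\H\lfloor_\C$), but need not be $\C^*$-increasing. I would then take its $\C^*$-monotone lower envelope
\[
    \sF(x) = \inf_{w \in \C^*}\sF_1(x + w),\quad x \in \cH.
\]
That $\sF$ is $L$-Lipschitz is inherited from $\sF_1$. That $\sF$ is $\C^*$-increasing is a direct translation argument: if $x_1 - x_2 \in \C^*$, then for every $w \in \C^*$ the element $w + (x_1 - x_2) \in \C^*$ is another admissible index in the infimum defining $\sF(x_2)$ and gives the same value of $\sF_1$ as $w$ does in $\sF(x_1)$, yielding $\sF(x_2) \leq \sF(x_1)$.

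\emph{The main obstacle} is verifying $\sF = \H$ on $\C$. The bound $\sF(x) \leq \H(x)$ is immediate (take $w = 0$); for the reverse, I need $\H(y) + L|y - x - w| \geq \H(x)$ for every $x \in \C$, $w \in \C^*$, $y \in \C$. Here I would apply the Moreau decomposition with respect to the dual pair $(\C^*, -\C)$, writing $u := y - x = u_1 - v_2$ with $u_1 \in \C^*$, $v_2 \in \C$, and $\la u_1, v_2\ra = 0$. The key observation is that the auxiliary point $\tilde y := x + u_1 = y + v_2$ lies in $\C$ as a sum of elements of $\C$, so that $\H(\tilde y) \geq \H(x)$ by $\C^*$-monotonicity and $|\H(y) - \H(\tilde y)| \leq L|v_2|$ by Lipschitzness. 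The algebraic identity
\[
    |u - w|^2 = |u_1 - w|^2 + |v_2|^2 + 2\la w, v_2\ra \geq |v_2|^2,
\]
where $\la w, v_2\ra \geq 0$ since $w \in \C^*$ and $v_2 \in \C$, then yields
\[
    \H(y) + L|y - x - w| \geq \H(\tilde y) - L|v_2| + L|v_2| \geq \H(x).
\]

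Preservation of positivity and convexity comes for free: $\H\lfloor_\C \geq 0$ gives $\sF_1 \geq 0$ and hence $\sF \geq 0$; and when $\H\lfloor_\C$ is convex, the integrand $(x, y) \mapsto \H(y) + L|y - x|$ is jointly convex, so $\sF_1$ is a partial infimum of a jointly convex function over the convex set $\C$ and hence convex, and $\sF$ in turn inherits convexity as a partial infimum of the jointly convex $(x, w) \mapsto \sF_1(x + w)$ over the convex set $w \in \C^*$. Part~(2) then follows by applying the same construction with $\C$ replaced by $\C \cap B(0, R_1)$ for a choice of $R_1 \geq R$ and $L$ taken to be the Lipschitz constant of $\H$ on $\C \cap B(0, 2R_1 + R)$; the Moreau auxiliary point $\tilde y$ then stays within this enlarged ball, so the argument for $\sF = \H$ on $\C \cap B(0, R)$ carries over, and all other properties transfer unchanged.
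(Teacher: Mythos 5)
Your proof is correct, but it takes a genuinely different route from the paper's. The paper defines $\sF$ in one shot as $\sF(x) = \inf\{\H(y)\,:\,y\in\C\cap(x+\C^*)\}$; the monotonicity and the identity $\sF\lfloor_\C=\H\lfloor_\C$ are then immediate from the structure of the formula, and the real work is proving Lipschitzness, which the paper does by writing $x_1-x_2 = p + (x_1-x_2-p)$ with $p$ the orthogonal projection onto $\C$ and showing $y_2+p\in\C\cap(x_1+\C^*)$. You instead build $\sF$ in two stages --- the McShane--Whitney extension $\sF_1$, then the $\C^*$-monotone lower envelope --- so that monotonicity and Lipschitzness come for free by construction, and the real work becomes showing $\sF=\H$ on $\C$, which you resolve with the Moreau decomposition $y-x=u_1-v_2$, $u_1=P_{\C^*}(y-x)$, $v_2=P_\C(x-y)$. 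The two technical hearts are really the same fact in different clothing (Moreau's decomposition \emph{is} the statement about orthogonal projections onto a cone and its polar), and in fact one can check your $\sF$ and the paper's $\sF$ are pointwise equal, so the difference is purely in exposition and in which properties one chooses to make automatic versus which one proves by hand. For Part~(2), the paper's approach is more different from yours: it patches $\H$ into a globally Lipschitz $\tilde\H$ on $\C$ by taking a maximum with an affine function supported by a fixed direction $v\in\itr(\C\cap\C^*)$, and then invokes Part~(1) verbatim; you instead keep the Part~(1) machinery and truncate the McShane--Whitney infimum to $y\in\C\cap B(0,R_1)$, controlling the Moreau point $\tilde y=y+v_2$ inside $\C\cap B(0,2R_1+R)$ so the local Lipschitz constant still applies. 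Your version avoids introducing the auxiliary direction $v$ and the computation around Lemma~\ref{l.cone_itr}, at the cost of an $R$-dependent geometric estimate on $\tilde y$; the paper's version reuses Part~(1) as a black box. Both are sound, and both correctly preserve nonnegativity and convexity (your convexity argument --- joint convexity of $(x,y)\mapsto\H(y)+L|y-x|$ followed by partial infimization over a convex set, twice --- is clean).

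One small thing worth making explicit if you write this up in full: that $\sF$ is real-valued (i.e., the double infimum is $>-\infty$) is not automatic for an infimum of Lipschitz functions. It follows once you have shown $\sF=\H$ on $\C$ (which you do), since an inf of $L$-Lipschitz functions that is finite at one point is finite and $L$-Lipschitz everywhere --- but the logical order matters, so you should establish the lower bound $\sF(x)\geq\H(x)$ for some $x\in\C$ before invoking Lipschitzness of $\sF$ globally.
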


\begin{proof}
Part~\eqref{i.ext_lip}.
We define $\sF(x)=\inf\{\H(y)\,|\,y\in \C\cap(x+\C^*)\}$ for $x\in\cH$. Clearly, $\sF$ is $\C^*$-increasing as $\C\cap(x'+\C^*)\subset \C\cap(x+\C^*)$ if $x'-x\in\C^*$. Since $\H\lfloor_\C$ is $\C^*$-increasing, we can easily check that $\sF\lfloor_\C=\H\lfloor_\C$.
Now, we show that $\sF$ is Lipschitz. Fix any $x_1, x_2\in\cH$. Let $ p $ be the projection of $x_1-x_2$ to $\C$. Then
\begin{align}\label{e.proj}
    \la x_1-x_2- p , c- p \ra\leq 0,\quad\forall c\in\C.
\end{align}
By setting $c=s p \in\C$ for $s\geq 0$ and varying $s$, we get 
\begin{align}\label{e.Lip0}
\la x_1-x_2- p ,  p \ra =0. 
\end{align}
By this, \eqref{e.proj} implies that $x_2-x_1+ p  \in\C^*$. Thus, for any $y_2\in \C\cap(x_2+\C^*)$, we have $y_2+ p \in \C\cap(x_1+\C^*)$. Using $\sF\lfloor_\C=\H\lfloor_\C$ and that $\sF$ is $\C^*$-increasing, we get that
\begin{align}\label{e.>F(x_1)}
    \H(y_2+ p )=\sF(y_2+ p )\geq \sF(x_1).
\end{align}
Also, due to the assumption that $\H\lfloor_\C$ is Lipschitz, setting $L = \|\H\lfloor_\C\|_\mathrm{Lip}$, we have that
\begin{align*}
    |\H(y_2+ p )-\H(y_2)|\leq L| p |.
\end{align*}
Combining the two displays above gives us 
\begin{align}\label{e.Liprho}
\sF(x_1)-\H(y_2)\leq L| p |,\quad\forall y_2\in\C\cap(x_2+\C^*).
\end{align}
Note that, by \eqref{e.Lip0}, we have $|x_1-x_2- p |^2=|x_1-x_2|^2-| p |^2$ and thus
\begin{align}\label{e.x_1-x_2}
    |x_1-x_2|\geq | p |.
\end{align}
Using this and taking the supremum over $y_2\in\C\cap(x_2+\C^*)$ in \eqref{e.Liprho}, we achieve that
\begin{align*}
    \sF(x_1)-\sF(x_2)\leq L |x_1-x_2|,
\end{align*}
proving the Lipschitzness of $\sF$, which also implies $\|\sF\|_\mathrm{Lip}= L$. This completes the proof of \eqref{i.ext_lip}.

Let us verify additional claims.
By the construction of $\sF$, it clearly satisfies that and if $\H\geq 0$, then $\sF\geq 0$. We now assume that $\H\lfloor_\C$ is convex.
Fix any $x,y\in\cH$, and let $x'\in \C\cap (x+\C^*), y'\in \C\cap(y+\C^*)$. Then, for any $\lambda\in [0,1]$, we have $\lambda x'+(1-\lambda)y' \in \C\cap (\lambda x+(1-\lambda)y+\C^*)$. Due to the convexity of $\H\lfloor_\C$, 
\begin{align*}
    \sF(\lambda x+(1-\lambda)y) \leq \H(\lambda x'+(1-\lambda)y') \leq \lambda \H (x') + (1-\lambda)\H(y').
\end{align*}
Optimizing over $x',y'$, we can deduce that $\sF$ is convex.

Part~\eqref{i.ext_loclip}. Fix any $v\in \itr(\C\cap\C^*)$ which is allowed by Lemma~\ref{l.cone_itr}. Then, there is $\delta>0$ such that $v+B(0,\delta)\subset \C^*$. Hence, for each $x\in\C$, we have $\la v - \delta y, x\ra \geq 0$ for all $y\in B(0,1)$ implying that $\la v, x\ra \geq \delta |x|$ by choosing $y=\frac{x}{|x|}$. On the other hand, the Cauchy-Schwarz inequality gives $\la v,x\ra \leq |v||x|$. Therefore, there is a constant $C>0$ such that
\begin{align}\label{e.<v,x>}
    C^{-1}|x|\leq \la v, x\ra \leq C|x|,\quad\forall x \in \C.
\end{align}

For $l>0$, we set
\begin{align*}
    A(l) = \{x\in\C:\la v,x\ra \leq l\}.
\end{align*}
For every $R>0$, we choose $r$ sufficiently large to ensure that $A(2r)\supset \C\cap B(0,R)$ and
\begin{align}\label{e.r>R}
    (2C^2+1)R<2Cr.
\end{align}
We also set $L = \|\H\lfloor_{A(2r)}\|_\mathrm{Lip}$. We define
\begin{align*}
    \tilde \H =
    \begin{cases}
    \H(x)\vee\left(\H(0)+2LC(\la v, x\ra -r)\right),\quad & x\in A(2r),
    \\
    \H(0)+2LC(\la v, x\ra -r)\quad & x\in \C\setminus A(2r),
    \end{cases}
\end{align*}
For $x\in\C$ satisfying $\la v, x\ra =2r$, by \eqref{e.<v,x>}, we have that
\begin{align*}
    \H(0) + 2LC(\la v, x\ra -r) =\H(0)+LC\la v, x\ra \geq \H(0)+L|x|\geq \H(x).
\end{align*}
Hence, $\tilde\H$ is continuous at points on $\{x\in\C:\la v,x\ra=2r\}$. Using this, $v\in\C$, and $\H\lfloor_\C\in \Gamma^\nearrow_\mathrm{locLip}(\C)$, we can verify that $\tilde\H\lfloor_\C\in \Gamma^\nearrow_\mathrm{Lip}(\C)$.

Using \eqref{e.r>R}, we have
\begin{align*}
    2LC(C|x|-r)\leq -L|x|, \quad\forall x\in B(0,R),
\end{align*}
which along with \eqref{e.<v,x>} implies that, for $x\in \C\cap B(0,R)$,
\begin{align*}
    \H(0) + 2LC(\la v, x\ra -r)&\leq \H(0) + 2LC(C|x| -r)\\
    &\leq \H(0)-L|x|\\
    &\leq \H(x).
\end{align*}
Hence, recalling that $\C\cap B(0,R)\subset A(2r)$, we obtain that $\tilde\H\lfloor_{\C\cap B(0,R)} = \H\lfloor_{\C\cap B(0,R)}$.

We set $\sF(x)=\inf\{\tilde\H(y)\,|\,y\in \C\cap(x+\C^*)\}$. Then, $\sF$ satisfies the desired conditions, as a result of the first part applied to $\sF$. 

Then, we verify additional conditions.
By construction, it is clear that if $\H\lfloor_\C\geq 0$, then $\tilde \H\geq 0$; and if $\H\lfloor_\C$ is convex, then $\tilde\H$ is convex. Thus, by the arguments in the first part, we know that if $\H\lfloor_\C\geq 0$, then $\sF\geq 0$; and if $\H\lfloor_\C$ is convex, then $\sF$ is convex.
\end{proof}

\begin{lemma}[Modification for a growing nonlinearity]\label{l.mod_grow}
If $\H:\cH\to\R$ satisfies $\H\lfloor_\C\in \Gamma^\nearrow_\mathrm{locLip}(\C)$ and
\begin{align}\label{e.liminfH}
    \liminf_{\substack{x\to\infty \\ x\in \C}}\H(x) =\infty,
\end{align}
then, there is $\sF\in\Gamma^\nearrow_\mathrm{locLip}(\cH)$ satisfying $\sF\lfloor_\C=\H\lfloor_\C$.
\end{lemma}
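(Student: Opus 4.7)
The plan is to define the infimal convolution
\[
\sF(x) \,=\, \inf\bigl\{\H(y) : y \in \C \cap (x + \C^*)\bigr\}, \qquad x \in \cH,
\]
exactly as in Lemma~\ref{l.ext_loclip}~\eqref{i.ext_lip}, and to use the growth hypothesis \eqref{e.liminfH} to prevent the minimizing sequences from escaping to infinity when arguing local Lipschitzness.

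First come the straightforward verifications. Choosing $v \in \itr(\C \cap \C^*)$ via Lemma~\ref{l.cone_itr} and noting that $v \in \itr\C$ implies $x + \lambda v \in \C$ once $\lambda$ is large enough (depending on $|x|$), one gets $\C \cap (x + \C^*) \neq \emptyset$ for every $x \in \cH$. The growth condition \eqref{e.liminfH} combined with continuity of $\H\lfloor_\C$ makes $\H\lfloor_\C$ bounded below on $\C$, so $\sF$ is finite. For $x \in \C$, the point $x$ itself lies in the admissible set (taking $0 \in \C^*$), and the $\C^*$-monotonicity of $\H\lfloor_\C$ makes $\H$ no smaller at any other admissible point, so $\sF\lfloor_\C = \H\lfloor_\C$. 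Monotonicity of $\sF$ is immediate from the inclusion $\C \cap (x' + \C^*) \subset \C \cap (x + \C^*)$ whenever $x' - x \in \C^*$.

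The main obstacle, and the only place where \eqref{e.liminfH} is genuinely used, is local Lipschitzness on an arbitrary ball $B(0,R)$. The plan here has two stages. Stage one: show $\sF$ is uniformly bounded above on $B(0,R)$ by producing some $\lambda_R > 0$ with $x + \lambda_R v \in \C$ for every $x \in B(0,R)$ (possible because $v \in \itr\C$), which yields $\sF(x) \leq M_R := \sup_{x \in B(0,R)} \H(x + \lambda_R v) < \infty$. Stage two: feed $M_R$ into \eqref{e.liminfH} to obtain $S_R > 0$ such that $\H(y) > M_R + 1$ on $\C \setminus B(0, S_R)$; then for $x \in B(0,R)$ the infimum defining $\sF(x)$ is unchanged if we further intersect the admissible set with $B(0, S_R)$.

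With the minimizing range trapped in a fixed compact region, the projection argument from the proof of Lemma~\ref{l.ext_loclip}~\eqref{i.ext_lip} transfers mechanically: for $x_1, x_2 \in B(0,R)$, letting $p$ be the projection of $x_1 - x_2$ onto $\C$ gives $y_2 + p \in \C \cap (x_1 + \C^*)$ for every $y_2 \in \C \cap (x_2 + \C^*)$, together with $|p| \leq |x_1 - x_2|$. Restricting to $y_2 \in \C \cap B(0, S_R)$ puts $y_2 + p$ in $\C \cap B(0, S_R + 2R)$, on which $\H\lfloor_\C$ admits a finite Lipschitz coefficient $L_R$; the bound $\H(y_2 + p) \leq \H(y_2) + L_R|x_1 - x_2|$, followed by taking the infimum over $y_2$ and interchanging $x_1, x_2$, delivers the desired local Lipschitz estimate.
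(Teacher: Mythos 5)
Your proposal is correct and follows essentially the same route as the paper: bound $\sF$ above on $B(0,R)$ by exhibiting an admissible point in $\C \cap (x + \C^*)$, use the growth condition \eqref{e.liminfH} to localize the (near-)minimizers to a fixed compact set, and then run the projection argument from Lemma~\ref{l.ext_loclip}~\eqref{i.ext_lip} verbatim on that set. The only cosmetic differences are that the paper produces a single admissible point $v$ (a rescaled element of $\itr(\C\cap\C^*)$) valid for all $x \in B(0,R)$ simultaneously and then extracts an actual minimizer, whereas you take an $x$-dependent admissible point $x+\lambda_R v$ and finish with an infimum over the restricted set rather than a minimizer; both are equally valid.
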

\begin{proof}
We define
\begin{align*}
    \sF(x) = \inf\{\H(y)\,|\,y\in \C\cap(x+\C^*)\},\quad\forall x \in \cH.
\end{align*}
We first prove the following claim: \textit{for every $R>0$, there is $r>0$ such that, for all $x\in B(0,R)$, there is $y\in \C\cap (x+\C^*)\cap B(0,r)$ satisfying $\sF(x) = \H(y)$.}
Fix any $u\in \itr(\C\cap\C^*)$ which is allowed by Lemma~\ref{l.cone_itr}. Then, there is $\delta>0$ such that $u +  B(0,\delta R) \subset \C^*$. Setting $v =\delta^{-1} u$, we have $v - x \in\C^*$ for all $x\in B(0,R)$. By the definition of $\sF$, we have that $\sF(x)\leq \H(v)$ for all $x\in B(0,R)$. By the assumption \eqref{e.liminfH}, there is $r>0$ such that $\inf_{\C\setminus B(0,r)} \H >\H(v)$. Hence, the infimum in $\sF(x)$ must be achieved at some $y\in \C\cap B(0,r)$, proving the claim.

By the same argument in the proof of Lemma~\ref{l.ext_loclip}~\eqref{i.ext_lip}, $\sF$ is $\C^*$-increasing and satisfies $\sF\lfloor_\C=\H\lfloor_\C$. Now, we show that $\sF$ is locally Lipschitz. Fix any $R>0$ and let $r>0$ be given by the claim proved above. Let $x_1,x_2\in \C\cap B(0,R)$ and $p$ be the projection of $x_1-x_2$ to $\C$. Proceed as in the proof of Lemma~\ref{l.ext_loclip}~\eqref{i.ext_lip}, we arrive at (see \eqref{e.>F(x_1)})
\begin{align*}
    \H(y_2+p)=\sF(y_2+p)\geq \sF(x_1)
\end{align*}
where, allowed by the above claim, we can choose $y_2\in \C\cap (x_2+\C^*)$ to satisfy $|y_2|\leq r$ and $\sF(x_2) = \H(y_2)$. Using \eqref{e.x_1-x_2}, we have $y_2,\, y_2+p  \in \C\cap B(0,2R+r)$. Setting $L$ to be Lipschitz coefficient of $\H\lfloor_{\C\cap B(0,2R+r)}$. This along with the above display, the fact that $\sF(x_2) = \H(y_2)$, and \eqref{e.x_1-x_2}, implies that
\begin{align*}
    \sF(x_1)\leq \H(y_2) + L|p|\leq \sF(x_2)+ L|x_1-x_2|.
\end{align*}
Therefore, $\sF$ is Lipschitz on $B(0,R)$, completing the proof.
\end{proof}

\begin{lemma}[Invariance of solutions under modification]\label{l.equiv}
Let $\sF:\cH\to\R$, $\H:\cH\to\R$ and $f:\R_+\times \C\to\R$. Assume one of the following:
\begin{enumerate}
    \item $\sF\lfloor_\C = \H\lfloor_\C$, and $f\in \mathfrak{M}$.
    
    \item $\sF\lfloor_{\C\cap B(0,R)} = \H\lfloor_{\C\cap B(0,R)}$ for some $R>0$, and $f\in\mathfrak{M}$ satisfies $\sup_{t\in\R_+}\|f(t,\cdot)\|_\mathrm{Lip}\leq R$. 
    
\end{enumerate}
Then, $f$ is a subsolution (resp., supersolution) of $\HJ(\mathring\C,\sF)$ if and only if $f$ is a subsolution (resp., supersolution) of $\HJ(\mathring\C,\H)$.

\end{lemma}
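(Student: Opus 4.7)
The plan is to verify the sub- and supersolution conditions directly from the definitions and to show that they translate identically between $\HJ(\mathring\C,\sF)$ and $\HJ(\mathring\C,\H)$. Both equations are only imposed at points $(t,x)\in(0,\infty)\times\mathring\C$, and the relevant inequality has the form $\partial_t\phi(t,x)-\sF(\nabla\phi(t,x))\leq 0$ (subsolution) or $\geq 0$ (supersolution), with the analogous inequality for $\H$. So everything reduces to checking $\sF(\nabla\phi(t,x))=\H(\nabla\phi(t,x))$ at every contact point $(t,x)\in(0,\infty)\times\mathring\C$. Under assumption~(1) this amounts to showing $\nabla\phi(t,x)\in\C$, and under assumption~(2) it additionally requires $|\nabla\phi(t,x)|\leq R$.

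For the inclusion $\nabla\phi(t,x)\in\C$, I will use that $f(t,\cdot)\in\Gamma^\nearrow(\C)$, which is the content of $f\in\mathfrak{M}$. Fix any $z\in\C^*$. Since $x\in\mathring\C$, for $\eps>0$ small we have $x+\eps z\in\C$ and $x-\eps z\in\C$, and $\C^*$-monotonicity yields $f(t,x+\eps z)\geq f(t,x)\geq f(t,x-\eps z)$. If $f-\phi$ has a local maximum at $(t,x)$, then $f(t,x+\eps z)-f(t,x)\leq\phi(t,x+\eps z)-\phi(t,x)=\eps\la\nabla\phi(t,x),z\ra+o(\eps)$; the left-hand side is non-negative, so dividing by $\eps$ and sending $\eps\to0$ yields $\la\nabla\phi(t,x),z\ra\geq 0$. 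If instead $f-\phi$ has a local minimum at $(t,x)$, testing at $x-\eps z$ gives $\phi(t,x-\eps z)-\phi(t,x)\leq f(t,x-\eps z)-f(t,x)\leq 0$, which after expansion again yields $\la\nabla\phi(t,x),z\ra\geq 0$. Since $z\in\C^*$ is arbitrary and $(\C^*)^*=\C$, we obtain $\nabla\phi(t,x)\in\C$. Combined with $\sF\lfloor_\C=\H\lfloor_\C$, this disposes of case~(1).

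For case~(2), the extra hypothesis $\sup_{t\in\R_+}\|f(t,\cdot)\|_\mathrm{Lip}\leq R$ forces $|\nabla\phi(t,x)|\leq R$. In the local-maximum case, combining $f(t,y)-f(t,x)\leq\la\nabla\phi(t,x),y-x\ra+o(|y-x|)$ with the Lipschitz lower bound $f(t,y)-f(t,x)\geq -R|y-x|$ and specializing to $y-x=-\eps\,\nabla\phi(t,x)/|\nabla\phi(t,x)|$ (the bound being trivial if the gradient vanishes) gives $|\nabla\phi(t,x)|\leq R$ upon letting $\eps\to 0^+$. The local-minimum case is symmetric, using the Lipschitz upper bound. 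Together with $\nabla\phi(t,x)\in\C$ from the previous paragraph, this places the gradient in $\C\cap B(0,R)$, on which $\sF$ and $\H$ coincide by hypothesis.

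I do not foresee a serious obstacle: the whole argument hinges on the observation, already recorded for differentiable functions after the definition of $\Gamma^\nearrow(\cD)$ in Section~\ref{s.intro_main_result}, that $\C^*$-monotonicity of $f(t,\cdot)$ forces the spatial gradient of any touching test function at an interior contact point to lie in $\C$. The restriction to $(0,\infty)\times\mathring\C$ in the definition of viscosity solution is essential, since it is what allows the perturbations $x\pm\eps z$ to remain in $\C$.
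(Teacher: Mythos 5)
Your proposal is correct and follows essentially the same approach as the paper: reduce to showing $\sF(\nabla\phi(t,x))=\H(\nabla\phi(t,x))$ at interior contact points, then use $\C^*$-monotonicity of $f(t,\cdot)$ to place $\nabla\phi(t,x)$ in $\C$ and, under assumption~(2), the Lipschitz bound to place it in $B(0,R)$. The only cosmetic difference is that the paper derives $|\nabla\phi(t,x)|\leq R$ by showing $\la\nabla\phi(t,x),z\ra\leq R|z|$ for all $z\in\cH$ rather than testing along the single direction $\pm\nabla\phi(t,x)/|\nabla\phi(t,x)|$; both are valid.
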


\begin{proof}
Let us verify the equivalence for subsolutions. The argument for supersolutions is the same. Suppose that $f-\phi$ achieves a local maximum at $(t,x)\in(0,\infty)\times\mathring\C$ for a smooth function $\phi$. It suffices to show that
\begin{align}\label{e.F(.)=H(.)}
    \sF(\nabla\phi(t,x)) = \H(\nabla \phi(t,x)).
\end{align}

Let us verify \eqref{e.F(.)=H(.)} under the first assumption. For any $y\in\C^*$ and $\epsilon>0$ small enough, we have
\begin{align*}
    \phi(t,x+\epsilon y)-\phi(t,x)\geq  f(t,x+\epsilon y)-f(t,x)\geq 0,
\end{align*}
where in the last inequality we used $f(t,\cdot)\in\Gamma^\nearrow(\C)$ due to $f\in\mathfrak{M}$. Sending $\epsilon$ to $0$, we obtain that
\begin{align*}
    \la \nabla \phi(t,x), y\ra\geq 0,\quad\forall y \in \C^*,
\end{align*}
implying that $\nabla \phi(t,x)\in(\C^*)^*=\C$. Hence, \eqref{e.F(.)=H(.)} follows from the condition that $\sF\lfloor_\C = \H\lfloor_\C$.

Under the second assumption, we also have $\nabla\phi(t,x)\in\C$. Now, since $\|f(t,\cdot)\|_\mathrm{Lip}\leq R$,  we have that, for every $z\in\cH$ and sufficiently small $\delta>0$,
\begin{align*}
    \phi(t,x-\delta z)-\phi(t,x)\geq  f(t,x-\delta z)-f(t,x)\geq-R\delta|z|,
\end{align*}
which implies $\la \nabla\phi(t,x),z\ra\leq R|z|$ for every $z\in\cH$ and thus $\nabla\phi(t,x)\in B(0,R)$. Hence, \eqref{e.F(.)=H(.)} follows from $\sF\lfloor_{\C\cap B(0,R)} = \H\lfloor_{\C\cap B(0,R)}$.
\end{proof}

\section{Well-posedness}
\label{s.well-posed}
Recall that, in this work, we interpret the well-posedness as the comparison principle together with the existence of solutions.

We first prove the comparison principle in Proposition~\ref{p.cp_H} for $\HJ(\mathring\C,\sF)$, which immediately yields the comparison principle, Corollary~\ref{c.cp_interior}, for $\HJ(\C,\sF)$. Then, we use results from Section~\ref{s.prelim} to obtain the comparison principle, Corollary~\ref{c.cp_C}, for $\HJ(\mathring\C,\H)$. Finally, using Perron's method in Proposition~\ref{p.exist_sol_H}, we show the existence of solutions of $\HJ(\mathring\C,\sF;\psi)$ for Lipschitz $\psi,\sF$.

We emphasize that Proposition~\ref{p.exist_sol_H} alone is not enough to prove the existence of solutions in Theorem~\ref{t}~\eqref{i.main_exist}. With results in this section and Section~\ref{s.prelim}, we are only able to show the well-posedness of $\HJ(\C,\sF)$ (and $\HJ(\mathring\C,\sF)$) for $\sF\in\Gamma^\nearrow_\mathrm{Lip}(\cH)$, as commented in Remark~\ref{r.well-posed}. For Theorem~\ref{t}~\eqref{i.main_exist}, we need more properties on the solution to be proved in ensuing sections (see the proof of Theorem~\ref{t}~\eqref{i.main_exist} in Section~\ref{s.intro_main_result}).

\subsection{Comparison principles}

\begin{proposition}[Comparison principle]\label{p.cp_H}
Assume one of the following:
\begin{enumerate}[label=\rm{(\roman*)}]
    \item \label{i.case1} $\sF\in\Gamma^\nearrow_\mathrm{Lip}(\cH)$, and $u,v\in\mathfrak{L}$;

    \item \label{i.case2} $\sF\in\Gamma^\nearrow_\mathrm{locLip}(\cH)$, and $u,v\in\mathfrak{L}_\mathrm{Lip}$;
\end{enumerate}
If $u,v$ are a subsolution and a supersolution of $\HJ(\mathring\C,\sF)$, respectively, then $\sup_{\R_+\times\C}(u-v)=\sup_{\{0\}\times \C}(u-v)$.
\end{proposition}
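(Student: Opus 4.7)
The plan is the classical doubling-of-variables argument, adapted to the fact that the equation holds only on the open interior $\mathring\C$ (so no boundary condition is available) and that $\mathfrak{L}, \mathfrak{L}_\mathrm{Lip}$ permit linear growth in $t$ and $x$. Suppose for contradiction that $\sup_{\R_+\times\C}(u-v)>\sup_{\{0\}\times\C}(u-v)$; by continuity and density of $\mathring\C$ in $\C$, pick $(t^\circ,x^\circ)\in(0,\infty)\times\mathring\C$ at which $u-v$ strictly exceeds the boundary supremum. For parameters $\eps,\sigma,\eta,\lambda>0$, introduce on $\R_+^2\times\C^2$
\begin{align*}
\Phi(t,x,s,y) = u(t,x)-v(s,y)-\frac{(t-s)^2+|x-y|^2}{2\eps^2}-\sigma\bigl(|x|^2+|y|^2\bigr)-\eta(t+s)-\lambda\bigl(d_\C(x)^{-1}+d_\C(y)^{-1}\bigr),
\end{align*}
with $d_\C$ as in~\eqref{e.d}. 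The linear growth of $u,v$ coming from $\mathfrak{L}$ under~\ref{i.case1} (respectively from $\mathfrak{L}_\mathrm{Lip}$ under~\ref{i.case2}), together with the coercive $\sigma$ and $\eta$ penalties and the boundary-barrier term $\lambda/d_\C$, sends $\Phi\to-\infty$ at infinity and forces the maximizer $(\hat t,\hat x,\hat s,\hat y)$ into $(0,\infty)^2\times\mathring\C^2$; the inequality $\Phi(\hat t,\hat x,\hat s,\hat y)\geq\Phi(t^\circ,x^\circ,t^\circ,x^\circ)>\sup_{\{0\}\times\C}(u-v)$ coming from the contradiction hypothesis is the lower bound that will eventually produce the final contradiction.

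At the interior maximizer the viscosity sub- and supersolution inequalities for $u$ and $v$ produce opposing bounds on $\partial_t\phi-\sF(\nabla\phi)$ with test functions encoding the penalties. Because $1/d_\C$ is only Lipschitz and concave, not smooth, its derivative contribution is handled exactly as in the proof of Proposition~\ref{p.drop_bdy}: introduce an auxiliary doubling with a smooth compactly-supported cutoff $\zeta$ near each of $(\hat t,\hat x)$ and $(\hat s,\hat y)$, and apply Lemma~\ref{l.d}~\eqref{i.Dd} and~\eqref{i.g-1/d} to identify the non-smooth derivative as an element of $\lambda\,d_\C(\cdot)^{-2}(\C^*\cap B(0,1))$. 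After sending the secondary-doubling parameters to zero, subtracting the two viscosity inequalities and invoking the Lipschitzness of $\sF$ — globally under~\ref{i.case1}, and on the bounded ball of gradients guaranteed by the uniform spatial Lipschitz control from $\mathfrak{L}_\mathrm{Lip}$ under~\ref{i.case2} — yields an estimate of the form
\begin{align*}
2\eta\,\leq\,C\bigl(\sigma(|\hat x|+|\hat y|)+\lambda\bigl(d_\C(\hat x)^{-2}+d_\C(\hat y)^{-2}\bigr)\bigr)+o(1).
\end{align*}

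The principal obstacle is arranging the remaining limits so that every term on the right vanishes; the boundary-barrier correction $\lambda\,d_\C^{-2}$ is the most delicate, since a priori $d_\C(\hat x)$ could be as small as $O(\lambda)$. The natural order is $\lambda\to0$ first, during which the $\lambda\,d_\C^{-2}(\C^*\cap B(0,1))$ correction is absorbed through the same $\C^*$-monotonicity mechanism used in the proof of Proposition~\ref{p.drop_bdy}; then $\eps\to0$, which forces $|\hat x-\hat y|\to0$ by the standard lemma; then $\sigma\to0$, exploiting $\sigma|\hat x|=O(\sqrt\sigma)$ from the a priori bound $\sigma|\hat x|^2\leq\Phi_{\max}\leq\text{const}$; and finally $\eta\to0$, yielding $2\eta\leq0$, against $\eta>0$. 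The main bookkeeping task under~\ref{i.case2} is verifying that throughout these limits the arguments of $\sF$ remain inside a ball whose radius is controlled by $\sup_t\|u(t,\cdot)\|_\mathrm{Lip}+\sup_t\|v(t,\cdot)\|_\mathrm{Lip}$ plus vanishing error, so that the local Lipschitz constant of $\sF$ invoked is uniform throughout.
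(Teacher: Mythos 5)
Your high-level scheme -- contradict, double variables, pin contact points into $\mathring\C$ with a barrier $\lambda/d_\C$, add a slope in $t$, and absorb the barrier's derivative through the $\C^*$-monotonicity of $\sF$ -- is the right spirit, but as written it has a coercivity gap that breaks the argument before it starts. Membership in $\mathfrak{L}$ (or $\mathfrak{L}_\mathrm{Lip}$) allows $u(t,x)$ and $v(t,x)$ to grow linearly in $t$ at a rate $L$ we do not control. Your only time penalty is $\eta(t+s)$, which dominates that growth only if $\eta>L$; yet the add-a-slope mechanism and the inequality $\Phi(\hat t,\hat x,\hat s,\hat y)\geq\Phi(t^\circ,x^\circ,t^\circ,x^\circ)>\sup_{\{0\}\times\C}(u-v)$ both require $\eta$ small (so the penalty $-2\eta t^\circ$ does not destroy the strict excess). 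Consequently, along $t=s\to\infty$ with $x=y$ fixed one gets $\Phi(t,x,t,x)\geq u(t,x)-v(t,x)-2\sigma|x|^2-2\eta t-2\lambda/d_\C(x)\to+\infty$ when $\eta<L$, so the supremum of $\Phi$ is not attained and there is no maximizer to work with. The paper repairs exactly this by adding $\frac{1}{T-t}+\frac{1}{T-t'}$ and restricting to $[0,T)^2$ with $T$ fixed large enough that the contradiction hypothesis survives (see~\eqref{e.u-v} and~\eqref{e.cp2.u-v-chi}); you need an analogous time confinement, since $\sigma(|x|^2+|y|^2)$ handles only the spatial direction.

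There is also a structural difference that makes your version substantially more cumbersome, and that leaks into an internal inconsistency. By attaching $\lambda/d_\C(x)+\lambda/d_\C(y)$ directly to both doubled spatial variables, the test functions appearing in \emph{both} the subsolution inequality in $(t,x)$ and the supersolution inequality in $(s,y)$ are non-smooth, so the secondary-doubling cutoff device of Proposition~\ref{p.drop_bdy} would have to be run twice, once at each contact point. The paper's proof sidesteps this entirely by introducing a fifth variable $y$ that carries only $\frac{\delta}{d(y)}+\eps|y|$ and is coupled to $x$ through $\frac{\alpha}{2}|x-y|^2$: the test functions in $(t,x)$ and $(t',x')$ are then genuinely smooth, and the non-smoothness of $d$ enters only through the first-order condition in $y$ (Lemma~\ref{l.d}~\eqref{i.g-1/d} and~\eqref{i.Dd}), which yields a perturbation $\alpha(y_\alpha-x_\alpha)+p\in\C^*$ with $|p|=\eps$ that the $\C^*$-monotonicity of $\sF$ absorbs cleanly. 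Relatedly, your displayed estimate $2\eta\leq C\bigl(\sigma(|\hat x|+|\hat y|)+\lambda(d_\C(\hat x)^{-2}+d_\C(\hat y)^{-2})\bigr)+o(1)$ is inconsistent with the absorption you describe afterwards: if the $\lambda\,d_\C^{-2}(\C^*\cap B(0,1))$ correction is removed via monotonicity (as it must be, since $\lambda\,d_\C(\hat x)^{-2}$ need not be $o(1)$ as $\lambda\to 0$), it should not appear on the right-hand side through a Lipschitz bound on $\sF$.
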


\begin{proof}
We treat both cases simultaneously.
Setting $C_0 = \sup_{\{0\}\times \C}(u-v)$, we can assume that $C_0$ is finite, otherwise there is nothing to show.
We argue by contradiction and assume that $\sup_{\R_+\times\C}(u-v)>\sup_{\{0\}\times\C}(u-v)$. Then, we can find $T>0$ sufficiently large so that
\begin{align}\label{e.u-v}
    \sup_{[0,T)\times\C}(u-v)>\sup_{\{0\}\times\C}(u-v).
\end{align}

\textit{Step~1.}
We fix some constants and introduce auxiliary functions.
Due to $u,v\in\mathfrak{L}$ in both cases, we can fix a constant $L>1$ to satisfy
\begin{gather*}
    L>\|v(0,\cdot)\|_\mathrm{Lip},
    \\
    |u(t,x)-u(0,x)|\vee |v(t,x)-v(0,x)|\leq Lt, \quad\forall (t,x)\in\R_+\times\C.
\end{gather*}
If necessary, we make $L$ larger to ensure
\begin{align}\label{e.L_case_ii}
    L > \sup_{t\in\R_+}\|v(t,\cdot)\|_\mathrm{Lip},\quad\text{in case~\ref{i.case2}}.
\end{align}
We set
\begin{align}\label{e.V=}
    V=
    \begin{cases}
    \|\sF\|_\mathrm{Lip},\quad &\text{in case~\ref{i.case1}},
    \\
    \|\sF\lfloor_{B(0,2L+2)}\|_\mathrm{Lip}, &\text{in case~\ref{i.case2}},
    \end{cases}
\end{align}
Let $\theta:\R\to\R_+$ be a smooth function satisfying
\begin{align}\label{e.theta}
    |\theta'(r)|\leq 1, \qquad\text{and}\qquad r\vee0 \leq \theta(r)\leq (r+1)\vee0,\quad\forall r\in\R,
\end{align}
where $\theta'$ is the derivative of $\theta$.
For $R>0$ to be chosen, we define
\begin{align*}
    \chi(t,x) = \theta\left(\left(1+|x|^2\right)^\frac{1}{2}+Vt-R\right),\quad\forall (t,x)\in \R_+\times \C.
\end{align*}
It is immediate that
\begin{gather}
    \sup_{(t,x)\in\R_+\times\C}|\nabla\chi(t,x)|\leq 1,\label{e.cp2.|nabla_Phi|}
    \\
    \partial_t\chi\geq V|\nabla \chi|,\label{e.cp2.d_tPhi>}
    \\
    \chi(t,x)\geq |x|-R, \quad\forall (t,x)\in\R_+\times\C. \label{e.cp2.Phi(t,x)>M|x|}
\end{gather}
For $\delta\in(0,2V)$ to be determined, we set
\begin{align}\label{e.eps=delta/2V}
    \eps = \frac{\delta}{2V}\in(0,1)
\end{align}
and define
\begin{align*}
    \zeta_1(t,t',x)=\chi(t,x) + \delta t+\frac{1}{T-t}+\frac{1}{T-t'},\quad \forall (t,t',x) \in [0,T)^2\times\C.
\end{align*}
Let $d=d_\C$ in \eqref{e.d}.
For each $\alpha>1$, we introduce
\begin{align*}
    \zeta_2(x,y) = \frac{\alpha}{2}|x-y|^2 + \frac{\delta}{d(y)}+\eps|y|,\quad\forall (x,y)\in \C^2.
\end{align*}
In view of \eqref{e.u-v}, we fix $\delta>0$ sufficiently small and $R,T$ sufficiently large so that
\begin{gather}
    \sup_{(t,x)\in[0,T)\times\C}(u(t,x)-v(t,x)-\zeta_1(t,t,x)-\zeta_2(x,x))\notag
    \\
    >\sup_{x\in\C}(u(0,x)-v(0,x)-\zeta_1(0,0,x)-\zeta_2(x,x)), \label{e.cp2.u-v-chi}
\end{gather}
Note that $\zeta_2(x,x)$ does not dependent on $\alpha$.
We introduce
\begin{align*}
    \Psi_\alpha(t,t',x,x',y)= u(t,x)-v(t',x')-\frac{\alpha}{2}(|t-t'|^2+|x-x'|^2)-\zeta_1(t,t',x)-\zeta_2(x,y),
    \\
    \quad\forall (t,t',x,x',y)\in[0,T)^2\times\C^3. 
\end{align*}
By the semi-continuity of $u$ and $v$, we have that $\Psi_\alpha$ is upper semicontinuous.
Throughout, we fix any $y_0\in \mathring\C$.

\textit{Step~2.}
We show the existence of a maximizer of $\Psi_\alpha$ and derive estimates on this maximizer.
By the definition of $L$ and $C_0$, we can see that, for all $t,t'\in[0,T)$ and $x,x'\in\C$,
\begin{align}\label{e.u-v<2LT...}
    u(t,x)-v(t',x')\leq 2LT+u(0,x)-v(0,x)+L|x-x'|\leq 2LT+C_0 + L|x-x'|,
\end{align}
which along with $\alpha>1$ and \eqref{e.cp2.Phi(t,x)>M|x|} implies that
\begin{align*}
    \Psi_\alpha(t,t',x,x',y)\leq 2LT +C_0+L|x-x'|-\frac{1}{2}|x-x'|^2-(|x|-R)-\frac{1}{T-t}-\frac{1}{T-t'}-\eps|y|.
\end{align*}
Hence, $\Psi_\alpha$ is bounded from above uniformly in $\alpha>1$ and decays as $|x|,|x'|,|y|\to\infty$ or $t,t'\to T$, which implies that $\Psi_\alpha$ achieves its supremum at some $(t_\alpha,t'_\alpha,x_\alpha,x'_\alpha,y_\alpha)$.
Setting $C_1=\Psi_\alpha(0,0,y_0,y_0,y_0)$ which is independent of $\alpha$, we have $C_1\leq \Psi_\alpha(t_\alpha,t'_\alpha,x_\alpha,x'_\alpha,y_\alpha)$ which along with the above display implies that there is a constant $C_2>0$ such that
\begin{align}\label{e.xx'yy'<C_1}
    |x_\alpha|,\ |x'_\alpha|,\ |y_\alpha|\leq C_2,\quad\forall \alpha>1.
\end{align}
Since we also have
\begin{align*}
    C_1\leq \Psi_\alpha(t_\alpha,t'_\alpha,x_\alpha,x'_\alpha,y_\alpha)\leq 2LT+C_0+2LC_2-\frac{\alpha}{2}(|t_\alpha-t'_\alpha|^2+|x_\alpha-x'_\alpha|^2),
\end{align*}
we can see that, as $\alpha\to\infty$,
\begin{align}\label{e.|t-t'|,|x-x'|}
    |t_\alpha-t'_\alpha|,\ |x_\alpha-x'_\alpha| = O(\alpha^{-\frac{1}{2}}).
\end{align}
In case~\ref{i.case2}, we can obtain a better bound on $|x_\alpha-x'_\alpha|$.
Using
\begin{align*}
    0\geq \Psi_\alpha(t_\alpha,t'_\alpha,x_\alpha,x_\alpha,y_\alpha)-\Psi_\alpha(t_\alpha,t'_\alpha,x_\alpha,x'_\alpha,y_\alpha) = v(t'_\alpha,x'_\alpha)-v(t'_\alpha,x_\alpha)+\frac{\alpha}{2}|x_\alpha-x'_\alpha|^2,
\end{align*}
and the property of $L$ in \eqref{e.L_case_ii}, we get
\begin{align}\label{e.cp2.|x_alpha-x'_alpha|}
    \alpha|x_\alpha-x'_\alpha|\leq  2L,\quad\text{in case~\ref{i.case2}}.
\end{align}

Using $\Psi_\alpha(t_\alpha,t'_\alpha,x_\alpha,x'_\alpha,y_\alpha)\geq C_1$ and \eqref{e.u-v<2LT...}, we have
\begin{align*}
    2LT+C_0 + L|x_\alpha-x'_\alpha|-\frac{\delta}{d(y_\alpha)} \geq C_1,
\end{align*}
which along with \eqref{e.xx'yy'<C_1} implies that there is a constant $C_3>0$ such that
\begin{align}\label{e.d(y),d(y')>C_3}
    d(y_\alpha)>C_3,\quad\forall \alpha>1.
\end{align}
Hence, $y_\alpha\in\mathring\C$ for all $\alpha$. Since $y\mapsto \Psi_\alpha(t_\alpha,t'_\alpha,x_\alpha,x'_\alpha,y)$ has a local maximum at $y_\alpha$, by Lemma~\ref{l.d}~\eqref{i.g-1/d}, we have that
\begin{align*}
    \frac{1}{\delta}((d(y_\alpha))^2\left(\alpha(y_\alpha-x_\alpha)+\eps\frac{ y_\alpha}{|y_\alpha|}\right)\in D^+d(y_\alpha).
\end{align*}
Using \eqref{e.d(y),d(y')>C_3} and Lemma~\ref{l.d}~\eqref{i.Dd}, we have that
\begin{gather}
    |x_\alpha-y_\alpha|=O(\alpha^{-1}) \quad\text{as $\alpha\to\infty$}, \label{e.|x-y|,|x'-y'|}
    \\
    \alpha (y_\alpha-x_\alpha)+p\in\C^*\quad\text{for some $p$ satisfying $|p|=\eps $}. \label{e.y-x_in_C^*}
\end{gather}

\textit{Step~3.}
We want to fix an appropriate value of $\alpha$.
In view of \eqref{e.xx'yy'<C_1}, \eqref{e.|t-t'|,|x-x'|}, and \eqref{e.|x-y|,|x'-y'|}, passing to a subsequence if necessary, we may assume $t_\alpha,t'_\alpha\to t_0$ and $x_\alpha,x'_\alpha,y_\alpha\to x_0$ for some $(t_0,x_0)\in [0,T]\times \C$ as $\alpha\to\infty$.
Then, we show $t_0\in(0,T)$.
Since \eqref{e.u-v<2LT...}, \eqref{e.xx'yy'<C_1}, and the definition of $C_1$ imply that
\begin{align*}
    C_1\leq \Psi_\alpha(t_\alpha,t'_\alpha,x_\alpha,x'_\alpha,y_\alpha)\leq 2LT+C_0+2LC_2-\frac{1}{T-t_\alpha},
\end{align*}
we must have that $t_\alpha$ is bounded away from $T$ uniformly in $\alpha$, which implies $t_0<T$. Since
\begin{gather*}
    u(t_\alpha,x_\alpha) - v(t'_\alpha,x'_\alpha)-\zeta_1(t_\alpha,t'_\alpha,x_\alpha)-\zeta_2(x_\alpha,y_\alpha)\geq \Psi_\alpha(t_\alpha,t'_\alpha,x_\alpha,x'_\alpha,y_\alpha) \\
    \geq \sup_{(t,x)\in[0,T)\times\C}\Psi_\alpha(t,t,x,x,x)= \sup_{(t,x)\in[0,T)\times\C}(u(t,x)-v(t,x)-\zeta_1(t,t,x)-\zeta_2(x,x)),
\end{gather*}
sending $\alpha\to\infty$, we get
\begin{gather*}
    u(t_0,x_0)-v(t_0,x_0)-\zeta_1 (t_0,t_0,x_0) - \zeta_2(x_0,x_0)
    \\
    \geq \sup_{(t,x)\in[0,T)\times\C}(u(t,x)-v(t,x)-\zeta_1(t,t,x)-\zeta_2(x,x)).
\end{gather*}
This along with \eqref{e.cp2.u-v-chi} implies that $t_0>0$. In conclusion, we have $t_0\in(0,T)$, and thus $t_\alpha,t'_\alpha\in(0,T)$ for sufficiently large $\alpha$. In addition, due to \eqref{e.|t-t'|,|x-x'|}, \eqref{e.|x-y|,|x'-y'|}, and \eqref{e.d(y),d(y')>C_3}, we can see that $x_\alpha,x'_\alpha\in\mathring\C$ for sufficiently large $\alpha$. Henceforth, we fix any such $\alpha$.

\textit{Step~4.}
We conclude the proof. Since the function
\begin{align*}
    (t,x)\mapsto \Psi_\alpha(t,t'_\alpha,x,x'_\alpha,y_\alpha)
\end{align*}
achieves its maximum at $(t_\alpha,x_\alpha)\in (0,T)\times \mathring\C$, by the assumption that $u$ is a subsolution, we have
\begin{align}\label{e.cp2.u_sub_Phi_alpha}
    \alpha(t_\alpha-t'_\alpha) + \delta +(T-t_\alpha)^{-2}+\partial_t\chi(t_\alpha,x_\alpha)-\sF\left(\alpha(x_\alpha - x'_\alpha)+\nabla\chi(t_\alpha,x_\alpha)-\alpha(y_\alpha-x_\alpha)\right)\leq 0
\end{align}
On the other hand, since the function
\begin{align*}
    (t',x')\mapsto \Psi_\alpha(t_\alpha,t',x_\alpha,x',y_\alpha)
\end{align*}
achieves its minimum at $(t'_\alpha,x'_\alpha)\in (0,T)\times \mathring\C$, by the assumption that $v$ is a supersolution, we have
\begin{align}\label{e.cp2.v_super_Phi_alpha}
    \alpha(t_\alpha-t'_\alpha) -(T-t'_\alpha)^{-2} -\sF\left(\alpha(x_\alpha - x'_\alpha)\right)\geq 0.
\end{align}
Taking the difference of \eqref{e.cp2.u_sub_Phi_alpha} and \eqref{e.cp2.v_super_Phi_alpha} and using the positivity of $(T-t_\alpha)^{-2}$ and $(T-t'_\alpha)^{-2}$, we have
\begin{align*}
    \delta \leq \sF\left(\alpha(x_\alpha - x'_\alpha)+\nabla\chi(t_\alpha,x_\alpha)-\alpha(y_\alpha-x_\alpha)\right)- \sF\left(\alpha(x_\alpha - x'_\alpha)\right) - \partial_t\chi(t_\alpha,x_\alpha).
\end{align*}
Since $\sF$ is assumed to be $\C^*$-increasing, by \eqref{e.y-x_in_C^*}, we obtain that
\begin{align*}
    \delta \leq \sF\left(\alpha(x_\alpha - x'_\alpha)+\nabla\chi(t_\alpha,x_\alpha)+p\right)- \sF\left(\alpha(x_\alpha - x'_\alpha)\right) - \partial_t\chi(t_\alpha,x_\alpha)
\end{align*}
for some $p$ satisfying $|p|=\eps<1$.
In case~\ref{i.case2}, by \eqref{e.cp2.|nabla_Phi|} and \eqref{e.cp2.|x_alpha-x'_alpha|}, the arguments inside $\sF$ have norms bounded by $2L+2$. In case~\ref{i.case1}, we will simply use the Lipschitzness of $\sF$. Using the definition of $V$ in \eqref{e.V=}, \eqref{e.cp2.d_tPhi>}, and \eqref{e.eps=delta/2V}, we obtain that, in both cases,
\begin{align*}
    \delta\leq V|\nabla \chi(t_\alpha,x_\alpha)|+V\eps-\partial_t\chi(t_\alpha,x_\alpha)\leq \frac{\delta}{2},
\end{align*}
reaching a contradiction. Therefore, the desired result must hold.
\end{proof}

\begin{corollary}\label{c.cp_interior}
Assume one of the following:
\begin{enumerate}[label=\rm{(\roman*)}]
    \item \label{i.case1_CF} $\sF\in\Gamma^\nearrow_\mathrm{Lip}(\cH)$, and $u,v\in\mathfrak{L}$;

    \item \label{i.case2_CF} $\sF\in\Gamma^\nearrow_\mathrm{locLip}(\cH)$, and $u,v\in\mathfrak{L}_\mathrm{Lip}$;
\end{enumerate}
If $u,v$ are a subsolution and a supersolution of $\HJ(\C,\sF)$, respectively, then $\sup_{\R_+\times\C}(u-v)=\sup_{\{0\}\times \C}(u-v)$.
\end{corollary}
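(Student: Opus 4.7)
The plan is to reduce the corollary to Proposition~\ref{p.cp_H} by the observation that being a sub/supersolution of $\HJ(\C,\sF)$ automatically yields the analogous property for $\HJ(\mathring\C,\sF)$. First I would note that if $u:\R_+\times\C\to\R$ is upper semi-continuous, then its restriction $u\lfloor_{\R_+\times\mathring\C}$ remains upper semi-continuous, and likewise for the lower semi-continuous $v$. Moreover, since $\mathfrak{L}$ and $\mathfrak{L}_\mathrm{Lip}$ are defined in terms of values on $\R_+\times\C$, the hypotheses in \ref{i.case1_CF} and \ref{i.case2_CF} coincide exactly with those in \ref{i.case1} and \ref{i.case2} of Proposition~\ref{p.cp_H}.

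Second, I would check that the viscosity test at an interior point $(t,x)\in(0,\infty)\times\mathring\C$ is the same for both equations. Given any open neighborhood $N\subset\R\times\cH$ of $(t,x)$, we may shrink $N$ so that $N\subset(0,\infty)\times\mathring\C$; for such $N$ we have
\begin{align*}
N\cap\bigl((0,\infty)\times\C\bigr)=N\cap\bigl((0,\infty)\times\mathring\C\bigr)=N,
\end{align*}
so $f-\phi$ achieves a local maximum (resp.\ minimum) at $(t,x)$ in the sense used for $\HJ(\C,\sF)$ if and only if it does so in the sense used for $\HJ(\mathring\C,\sF)$. The differential inequalities required at interior points therefore coincide, while $\HJ(\C,\sF)$ simply demands additional inequalities at points of $(0,\infty)\times\partial\C$.

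Consequently, the subsolution $u$ and supersolution $v$ of $\HJ(\C,\sF)$ are, respectively, a subsolution and a supersolution of $\HJ(\mathring\C,\sF)$ in exactly the sense of Definition~\ref{d.vs} applied with $\cD=\mathring\C$. Proposition~\ref{p.cp_H}, invoked with hypothesis \ref{i.case1} in case~\ref{i.case1_CF} and with \ref{i.case2} in case~\ref{i.case2_CF}, then immediately delivers $\sup_{\R_+\times\C}(u-v)=\sup_{\{0\}\times\C}(u-v)$, as required. I do not expect any genuine obstacle here: the entire content of the corollary is this direct transfer, and the only small point to keep straight is that the extra boundary constraints imposed by $\HJ(\C,\sF)$ are never needed in the argument, only discarded.
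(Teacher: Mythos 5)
Your proposal is correct and takes essentially the same route as the paper: the paper simply observes that, by Definition~\ref{d.vs}, a sub/supersolution of $\HJ(\C,\sF)$ is automatically a sub/supersolution of $\HJ(\mathring\C,\sF)$, and then invokes Proposition~\ref{p.cp_H}. You spell out in more detail why the restriction preserves the viscosity tests at interior points, but the content is the same.
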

\begin{proof}
Under the assumption on $u,v$, by Definition~\ref{d.vs} of viscosity solutions, we have that $u,v$ are respectively a subsolution and a supersolution of $\HJ(\mathring\C,\sF)$. This corollary immediately follows from Proposition~\ref{p.cp_H}.
\end{proof}

\begin{corollary}\label{c.cp_C}
Let $\H:\cH\to\R$ be a function.
Assume one of the following:
\begin{enumerate}[label=\rm{(\roman*)}]
    \item \label{i.case1_C} $\H\lfloor_\C\in\Gamma^\nearrow_\mathrm{Lip}(\C)$, and $u,v\in\mathfrak{M}\cap\mathfrak{L}$;

    \item \label{i.case2_C} $\H\lfloor_\C\in\Gamma^\nearrow_\mathrm{locLip}(\C)$, and $u,v\in\mathfrak{M}\cap\mathfrak{L}_\mathrm{Lip}$;
\end{enumerate}
If $u,v$ are a subsolution and a supersolution of $\HJ(\mathring\C,\H)$, respectively, then $\sup_{\R_+\times\C}(u-v)=\sup_{\{0\}\times \C}(u-v)$.
\end{corollary}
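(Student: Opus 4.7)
The plan is to reduce both cases of Corollary~\ref{c.cp_C} to Proposition~\ref{p.cp_H} by modifying $\H$ into a nonlinearity $\sF$ with good properties on all of $\cH$, then invoking the invariance result Lemma~\ref{l.equiv} to transfer the sub/supersolution properties from $\HJ(\mathring\C,\H)$ to $\HJ(\mathring\C,\sF)$.

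In case~\ref{i.case1_C}, where $\H\lfloor_\C\in\Gamma^\nearrow_\mathrm{Lip}(\C)$, I would apply Lemma~\ref{l.ext_loclip}~\eqref{i.ext_lip} to obtain $\sF\in\Gamma^\nearrow_\mathrm{Lip}(\cH)$ with $\sF\lfloor_\C=\H\lfloor_\C$. Since $u,v\in\mathfrak{M}$, the first alternative of Lemma~\ref{l.equiv} applies, giving that $u$ (resp.\ $v$) is a subsolution (resp.\ supersolution) of $\HJ(\mathring\C,\sF)$. Combined with $u,v\in\mathfrak{L}$, this puts us in case~\ref{i.case1} of Proposition~\ref{p.cp_H}, which yields $\sup_{\R_+\times\C}(u-v)=\sup_{\{0\}\times\C}(u-v)$.

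In case~\ref{i.case2_C}, where $\H\lfloor_\C\in\Gamma^\nearrow_\mathrm{locLip}(\C)$ and $u,v\in\mathfrak{M}\cap\mathfrak{L}_\mathrm{Lip}$, I would set
\begin{align*}
    R = \sup_{t\in\R_+}\|u(t,\cdot)\|_\mathrm{Lip}\vee\sup_{t\in\R_+}\|v(t,\cdot)\|_\mathrm{Lip},
\end{align*}
which is finite by the definition of $\mathfrak{L}_\mathrm{Lip}$. By Lemma~\ref{l.ext_loclip}~\eqref{i.ext_loclip}, there exists $\sF\in\Gamma^\nearrow_\mathrm{Lip}(\cH)$ with $\sF\lfloor_{\C\cap B(0,R)}=\H\lfloor_{\C\cap B(0,R)}$. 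The second alternative of Lemma~\ref{l.equiv} then shows that $u$ and $v$ are respectively a subsolution and a supersolution of $\HJ(\mathring\C,\sF)$. Since $\sF\in\Gamma_\mathrm{Lip}(\cH)$ and $u,v\in\mathfrak{L}_\mathrm{Lip}\subset\mathfrak{L}$, Proposition~\ref{p.cp_H} (in either case) delivers the desired equality.

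There is no substantial obstacle here: all the heavy lifting was done in Proposition~\ref{p.cp_H}, and the remaining work is a bookkeeping exercise consisting of choosing the correct radius $R$ so that the modification $\sF$ agrees with $\H$ on the range of gradients that can appear as touching slopes of the $\C^*$-increasing solutions. The monotonicity hypothesis $u,v\in\mathfrak{M}$ is precisely what makes Lemma~\ref{l.equiv} applicable, so the irrelevance of values of $\H$ outside $\C$ (resp.\ outside $\C\cap B(0,R)$) is already built into the framework.
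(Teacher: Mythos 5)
Your proof is correct and follows essentially the same route as the paper's: modify $\H$ into $\sF\in\Gamma^\nearrow_\mathrm{Lip}(\cH)$ via Lemma~\ref{l.ext_loclip}, transfer the sub/supersolution properties with Lemma~\ref{l.equiv}, and conclude by Proposition~\ref{p.cp_H}. The only cosmetic difference is that the paper fixes $R$ strictly greater than the Lipschitz supremum (which also covers the degenerate case of a zero supremum, since Lemma~\ref{l.ext_loclip}~\eqref{i.ext_loclip} requires $R>0$), but this is immaterial.
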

\begin{proof}
For the second case (due to $u,v\in\mathfrak{L}_\mathrm{Lip}$), we fix some $R>\sup_{t\in\R_+}\|u(t,\cdot)\|_\mathrm{Lip}\vee \|v(t,\cdot)\|_\mathrm{Lip}$. We treat both cases simultaneously. 
Applying Lemma~\ref{l.ext_loclip}, we can find $\sF\in\Gamma^\nearrow_\mathrm{Lip}(\cH)$ equal to $\H$ on $\C$ in the first case, and on $\C\cap B(0,R)$ in the second case. By Lemma~\ref{l.equiv}, $u,v$ are respectively a subsolution and a supersolution of $\HJ(\mathring\C,\sF)$ in both cases. Then, the desired result follows from Proposition~\ref{p.cp_H}.
\end{proof}
\subsection{Existence of solutions}

\begin{proposition}[Existence of solutions]\label{p.exist_sol_H}
For any $\psi\in\Gamma_\mathrm{Lip}(\C)$ and $\sF\in\Gamma_\mathrm{Lip}(\cH)$, there is a viscosity solution $f:\R_+\times\C\to\R$ of $\HJ(\mathring\C,\sF;\psi)$ in $\mathfrak{L}$.
\end{proposition}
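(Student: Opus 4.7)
The plan is to invoke Perron's method, using the comparison principle in Proposition~\ref{p.cp_H} to collapse the upper and lower envelopes of the Perron candidate. The first step is to build barriers. Set $L = \|\psi\|_\mathrm{Lip}$ and $K = \sup_{p\in B(0,L)}|\sF(p)|$, which is finite since $\sF$ is Lipschitz on $\cH$. I would show that $w^+(t,x) := \psi(x) + Kt$ and $w^-(t,x) := \psi(x) - Kt$ are respectively a supersolution and a subsolution of $\HJ(\mathring\C, \sF)$, each lying in $\mathfrak{L}$ and agreeing with $\psi$ at $t=0$. To verify the supersolution property of $w^+$, suppose a smooth $\phi$ has $w^+ - \phi$ locally minimized at $(t_0, x_0) \in (0,\infty) \times \mathring\C$. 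Freezing $x = x_0$ and varying $t$ in an open interval around $t_0$ gives $\partial_t\phi(t_0,x_0) = K$; freezing $t = t_0$ and using $x_0 \in \mathring\C$ together with the $L$-Lipschitzness of $\psi$ forces $|\nabla\phi(t_0,x_0)| \leq L$; the choice of $K$ then yields $(\partial_t\phi - \sF(\nabla\phi))(t_0,x_0) \geq K - K = 0$. The subsolution property of $w^-$ is symmetric.

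Next, define the Perron class
\begin{align*}
\mathcal{S} = \left\{w:\R_+\times\C\to\R \ \big| \ w \text{ is a subsolution of } \HJ(\mathring\C,\sF),\ w^-\leq w\leq w^+\right\}
\end{align*}
and set $f(t,x) = \sup_{w\in\mathcal{S}} w(t,x)$; the class is nonempty since $w^- \in \mathcal{S}$. The classical Perron argument, whose proof is purely local and transfers verbatim to the open set $(0,\infty)\times\mathring\C$, shows that the upper semi-continuous envelope $f^*$ is a subsolution and the lower semi-continuous envelope $f_*$ is a supersolution: if the supersolution property of $f_*$ were to fail at some interior point, a smooth bump perturbation would produce a strictly larger member of $\mathcal{S}$, contradicting maximality. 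The sandwich $w^- \leq f_* \leq f^* \leq w^+$ together with continuity of $w^\pm$ forces $f^*(0,\cdot) = f_*(0,\cdot) = \psi$ and gives the uniform bound $|f^*(t,x) - \psi(x)|,\ |f_*(t,x) - \psi(x)| \leq Kt$, so both envelopes lie in $\mathfrak{L}$. Applying Proposition~\ref{p.cp_H} to the pair $(f^*, f_*)$ yields $f^* \leq f_*$, while $f_* \leq f^*$ is automatic; hence $f = f^* = f_*$ is continuous, solves $\HJ(\mathring\C,\sF;\psi)$, and belongs to $\mathfrak{L}$.

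I expect the main obstacle to be the verification of the barrier properties, which depends crucially on the interior condition $x_0 \in \mathring\C$: without it, one cannot freely vary $x$ to extract $|\nabla\phi(t_0,x_0)|\leq L$ from the Lipschitzness of $\psi$. A secondary delicate point is ensuring in Perron's bump step that the perturbed function remains sandwiched between $w^-$ and $w^+$ so that it truly lies in $\mathcal{S}$; this is handled by localizing the bump to a neighborhood where $f_* < w^+$ strictly and taking the perturbation amplitude sufficiently small. All other ingredients — the comparison principle, local boundedness of $\sF$, and Lipschitz initial data — are supplied directly by the hypotheses, so no further modification of the classical Perron scheme should be needed.
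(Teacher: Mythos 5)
Your proof is correct and follows essentially the same route as the paper: Perron's method with the affine barriers $\psi(x)\pm Kt$, verification of the barrier sub/supersolution properties using $x_0\in\mathring\C$ to bound $|\nabla\phi(t_0,x_0)|$ by $\|\psi\|_\mathrm{Lip}$, and an appeal to Proposition~\ref{p.cp_H} to collapse the envelopes. The only cosmetic difference is in how one certifies that the envelopes and bump modifications stay in $\mathfrak{L}$ so the comparison principle applies: you derive it directly from the sandwich $w^-\leq f_*\leq f^*\leq w^+$ (which is clean), while the paper verifies it abstractly for any $g\in\mathfrak{L}$; both are sound.
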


As commented at the beginning of this section, up to this stage, we can only prove the well-posedness of $\HJ(\mathring\C,\sF)$ and $\HJ(\C,\sF)$ for $\sF\in\Gamma^\nearrow_\mathrm{Lip}(\cH)$ (see the remark below), not the desired one of $\HJ(\mathring\C,\H)$ for $\H\lfloor_\C\in\Gamma^\nearrow_\mathrm{locLip}(\C)$ in Theorem~\ref{t}. We need more properties of the solution to be proved later.

\begin{remark}\label{r.well-posed}
Let $\sF\in \Gamma^\nearrow_\mathrm{Lip}(\cH)$.
\begin{itemize}
    
    \item {\rm (Well-posedness of $\HJ(\mathring{\C},\sF)$)}
    Proposition~\ref{p.cp_H} yields the comparison principle for $\HJ(\mathring{\C},\sF)$, and Proposition~\ref{p.exist_sol_H} gives the existence of solutions of $\HJ(\mathring{\C},\sF)$ for Lipschitz $\psi$.
    
    \item {\rm (Well-posedness of $\HJ(\C,\sF)$)} Corollary~\ref{c.cp_interior} gives the comparison principle. For Lipschitz $\psi$, Proposition~\ref{p.exist_sol_H} yields a solution of $\HJ(\mathring\C,\sF;\psi)$, which, by Proposition~\ref{p.drop_bdy}, also solves $\HJ(\C,\sF;\psi)$.
\end{itemize}

\end{remark}

\begin{proof}[Proof of Proposition~\ref{p.exist_sol_H}]
We plan to produce a subsolution and a supersolution and apply Perron's method.

Fixing a positive constant $K$ satisfying $K> \sup_{B(0,\|\psi\|_\mathrm{Lip})}|\sF|$, we define $\underline u, \bar u:\R_+\times \C\to\R$ by
\begin{align*}
    \underline u (t,x) &= -Kt +\psi(x),
    \\
    \bar u (t,x) &= Kt +\psi(x),
\end{align*}
for $(t,x)\in \R_+\times \C$. Using the choice of $K$, we can verify that $\underline u$ is a subsolution and $\bar u$ is a supersolution of $\HJ(\mathring\C,\sF)$ (it is important here that the equation is over $\mathring\C$ instead of $\C$), both with initial condition $\psi$. Setting
\begin{align*}
    f(t,x)=\sup\left\{u(t,x)\ \big| \ \underline u \leq u \leq \bar u,\ \text{and $u$ is a subsolution of $\HJ(\mathring\C,\sF)$}\right\},\quad\forall (t,x)\in\R_+\times \C,
\end{align*}
we can easily verify that $f(0,\cdot) = \psi$ and $f\in\mathfrak{L}$.

We have the comparison principle for solutions of $\HJ(\mathring\C,\sF)$ in $\mathfrak{L}$ supplied by Proposition~\ref{p.cp_H} with assumption~\ref{i.case1} therein.
To apply Perron's method with this comparison principle, we need to make sure that the following holds for any function $g\in\mathfrak{L}$:
\begin{itemize}
    \item both the upper semi-continuous envelope and the lower semi-continuous envelope of $g$ belong to $\mathfrak{L}$;
    \item a ``local bump'' modification $\hat g$ of $g$ still lies in $\mathfrak{L}$.
\end{itemize}

For the first requirement, we recall that the upper semi-continuous envelop of $g$ is defined by
\begin{align*}
    \bar g(t,x) = \lim_{r\searrow 0 }\sup_{\substack{(s,y)\in\R_+\times\C \\ |(s,y)-(t,x)|\leq r }}g(s,y),\quad\forall(t,x)\in\R_+\times\C,
\end{align*}
and the lower semi-continuous envelop $\underline g$ is defined with $\sup$ in the above replaced by $\inf$.
Since $g\in\mathfrak{L}$, it is clear that $g$ is continuous at points on $\{0\}\times \C$, which implies that $\bar g(0,\cdot)  = \underline g(0,\cdot) = g(0,\cdot)$ is Lipschitz. Setting $L = \sup_{t>0}\frac{|g(t,x) -g(0,x)|}{t}$, we have, for every $(t,x) \in (0,\infty)\times\C$,
\begin{align*}
    -Lt + g(0,x) \leq g(t,x)\leq \bar g(t,x) \leq \lim_{r\searrow0}\sup_{\substack{(s,y)\in\R_+\times\C \\ |(s,y)-(t,x)|\leq r}} \left(g(0,y)+Ls\right) = g(0,x)+Lt
\end{align*}
where we used the continuity of $g(0,\cdot)$ in the last inequality.
This verifies that $\bar g\in\mathfrak{L}$. Similarly, we have $\underline g\in\mathfrak{L}$. 

For the second requirement, the local pump modification needed in Perron's method is of the following form:
\begin{align*}
    \hat g(t,x) = 
    \begin{cases}
    g(t,x) \vee v(t,x), & \quad (t,x) \in [t_0 -r, t_0+r]\times B(x_0, r),
    \\
    g(t,x), & \quad \text{otherwise},
    \end{cases}
\end{align*}
where $(t_0,x_0)\in (0,\infty) \times \mathring \C$; $r>0$ is sufficiently small so that $[t_0 -r, t_0+r]\times B(x_0, r) \subset (0,\infty) \times \mathring \C$; and $v$ is smooth on $[t_0 -r, t_0+r]\times B(x_0, r)$. 
Note that $\hat g(0,\cdot) = g(0,\cdot)$. Using $t_0-r>0$, the boundedness of $v$ over $[t_0-r,t_0+r]\times B(x_0,r)$, and $g\in\mathfrak{L}$, we can verify $\hat g\in \mathfrak{L}$.

Then, we conclude by Perron's method (\cite[Theorem~4.1]{crandall1992user} with a slight but obvious variation to adapt to the setting of a Cauchy problem) that $f$ is a viscosity solution of $\HJ(\mathring{\C},\sF;\psi)$. We have also shown that $f \in \mathfrak{L}$.
\end{proof}

\section{Monotonicity}
\label{s.monotone}

We show the monotonicity of the solution in both the spatial variable (Proposition~\ref{p.monotone}) and the temporal variable (Proposition~\ref{p.monotone_time}).

\begin{proposition}[Spatial monotonicity]\label{p.monotone}
If $f\in\mathfrak{L}_\mathrm{Lip}$ solves $\HJ(\mathring\C,\sF;\psi)$ for some  $\psi\in\Gamma^\nearrow_\mathrm{Lip}(\C)$ and some $\sF\in\Gamma^\nearrow_\mathrm{locLip}(\cH)$, then $f\in\mathfrak{M}$.
\end{proposition}

\begin{proof}
Let $\bar\psi: \cH\to\R$ be the extension of $\psi$ given in Lemma~\ref{l.ext_loclip}~\eqref{i.ext_lip}. Since $\bar \psi$ is Lipschitz, the classical result gives that there exists a Lipschitz solution $\bar f$ of $\HJ(\cH,\sF;\bar\psi)$. Straightforwardly $\bar f\lfloor_{\R_+\times \C}$ solves $\HJ(\mathring\C,\sF)$, so Proposition~\ref{p.cp_H}~\ref{i.case2} gives $\bar f\lfloor_{\R_+\times \C} =f$. Fix any $t_0\in\R_+$ and $x_0,y_0\in\C$ satisfying $x_0-y_0\in\C^*$. Set $z=x_0-y_0$ and $\bar g(t,x) = \bar f(t,x+z)$ for all $(t,x)\in\R_+\times \cH$. Then, $\bar g$ solves $\HJ(\cH,\sF)$. We set $g = \bar g\lfloor_{\R_+\times\C}$ and it is clear that $g \in \mathfrak{L}_\mathrm{Lip}$ and $g$ solves $\HJ(\mathring\C,\sF)$. Since $\bar \psi\in \Gamma^\nearrow(\cH)$ due to Lemma~\ref{l.ext_loclip}~\eqref{i.ext_lip}, we have $g(0,x) \geq f(0,x)$ for all $x\in\C$. Hence, Proposition~\ref{p.cp_H}~\ref{i.case2} yields $g\geq f$. Evaluation at $(t_0,y_0)$ gives $f(t_0,x_0)\geq f(t_0,y_0)$, which implies $f\in \mathfrak{M}$.
\end{proof}

\begin{proposition}[Temporal monotonicity]\label{p.monotone_time}
If $f \in \mathfrak{L}$ solves $\HJ(\C,\sF;\psi)$ for some $\psi\in\Gamma_\mathrm{Lip}(\C)$ and some function $\sF:\cH\to\R$ satisfying $\sF\geq 0$, then $f(t,x)\leq f(t',x)$ for all $t'\geq t\geq 0$ and $x\in\C$.
\end{proposition}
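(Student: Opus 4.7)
The plan is to show $\psi(x) \leq f(t, x)$ for all $(t, x) \in \R_+ \times \C$ by recognizing $\psi$ as a viscosity subsolution and invoking a comparison-type argument, then to extend to general $0 \leq t \leq t'$ by time translation.

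First, I would verify that $\underline u(t, x) := \psi(x)$ is a viscosity subsolution of $\HJ(\C, \sF)$. If $\phi$ is smooth and $\underline u - \phi$ has a local maximum at $(t_0, x_0) \in (0, \infty) \times \C$, then restricting the inequality to $x = x_0$ shows that $t \mapsto -\phi(t, x_0)$ has a local maximum at $t_0$, so $\partial_t \phi(t_0, x_0) = 0$. Combined with $\sF \geq 0$, this yields
\[
(\partial_t \phi - \sF(\nabla \phi))(t_0, x_0) = -\sF(\nabla \phi(t_0, x_0)) \leq 0,
\]
confirming the subsolution property.

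Next, I would deduce $\underline u \leq f$ by adapting the doubling-variable scheme of Proposition~\ref{p.cp_H}. Arguing by contradiction, suppose $\sup_{\R_+ \times \C}(\psi - f) > 0$, choose $T > 0$ sufficiently large, and consider
\[
\Psi_\alpha(t, s, x, y) = \psi(x) - f(s, y) - \tfrac{\alpha}{2}\bigl(|t-s|^2 + |x-y|^2\bigr) - \delta(t+s) - (T-t)^{-1} - (T-s)^{-1},
\]
augmented by a spatial penalty to ensure the sup is attained at an interior point $(t_\alpha, s_\alpha, x_\alpha, y_\alpha)$ for large $\alpha$. The crucial simplification is that $\underline u$ is constant in $t$, so the subsolution property at $(t_\alpha, x_\alpha)$ and the supersolution property at $(s_\alpha, y_\alpha)$ both produce $\sF$ evaluated at essentially the same spatial gradient $\alpha(x_\alpha - y_\alpha)$; subtracting the two inequalities cancels the $\sF$ terms and leaves $2\delta + (T - t_\alpha)^{-2} + (T - s_\alpha)^{-2} \leq 0$, the desired contradiction.

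Finally, for general $0 \leq t \leq t'$, the shifted function $g(s, x) := f(t+s, x)$ is a viscosity solution of $\HJ(\C, \sF)$ on $\R_+ \times \C$ with initial datum $g(0, \cdot) = f(t, \cdot)$. Applying the first two steps to $g$ in place of $f$ (with $f(t, \cdot)$ playing the role of $\psi$) yields $f(t, x) = g(0, x) \leq g(t' - t, x) = f(t', x)$. The main obstacle is the comparison in the second step: since $\sF$ is not assumed Lipschitz, Proposition~\ref{p.cp_H} cannot be invoked as a black box. The trick is to exploit the $t$-independence of $\underline u$ so that the $\sF$ terms from the sub/super inequalities coincide and cancel, obviating any bound on $\sF(p) - \sF(q)$ for distinct $p, q$; arranging the spatial penalty so that this cancellation is preserved while still providing compactness for the maximizer---relying on the Lipschitzness of $\psi$ to control the gradients of test functions at the contact point---is the delicate bookkeeping.
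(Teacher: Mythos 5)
Your argument has two genuine gaps, both of which the paper sidesteps with a different design of the doubling-variables functional.

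\textbf{The cancellation of $\sF$ terms cannot actually be arranged.} For the two $\sF$-evaluations in your sub- and super-solution inequalities to coincide, the spatial gradients of the respective test functions at the two contact points must be \emph{exactly} equal. This forces the spatial penalty to depend only on $x-y$. But a penalty depending only on $x-y$ does not keep a maximizing sequence from escaping to infinity along directions where $x_n - y_n$ stays bounded while $|x_n|,|y_n|\to\infty$, so the supremum is not known to be attained. Conversely, any penalty providing genuine compactness (e.g.\ $\chi(x)=\theta((1+|x|^2)^{1/2}-R)$ as in the rest of the paper) depends on $x$ individually; it contributes $\nabla\chi(x_\alpha)$ to the subsolution gradient but not to the supersolution gradient, so the $\sF$ arguments no longer match. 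Since $\sF$ is assumed only to satisfy $\sF\geq 0$ --- there is no continuity hypothesis, let alone Lipschitzness --- you cannot control $\sF(p)-\sF(q)$ for nearby $p\neq q$, and the ``delicate bookkeeping'' you flag at the end cannot be made to work. The Lipschitzness of $\psi$ bounds the magnitude of the contact gradients but does not make the two arguments equal.

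\textbf{The time-translation step is unsupported.} Applying the first two steps to $g(s,x):=f(t+s,x)$ with ``initial datum'' $f(t,\cdot)$ requires $g\in\mathfrak{L}$, hence $f(t,\cdot)\in\Gamma_{\mathrm{Lip}}(\C)$ and $\sup_{s>0,x}|f(t+s,x)-f(t,x)|/s<\infty$. The hypothesis $f\in\mathfrak{L}$ gives Lipschitzness of $f(0,\cdot)$ and a bound on $|f(s,\cdot)-f(0,\cdot)|/s$ only; it does not yield Lipschitzness of $f(t,\cdot)$ for $t>0$, nor the required bound on increments of $f$ away from $t=0$.

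The paper's proof avoids both problems. It runs the doubling-variables argument directly on $\Psi_\alpha(t,t',x,x') = f(t,x)-f(t',x')-\alpha|x-x'|^2-\zeta(t,t',x)$ over the \emph{constrained} set $A\times\C^2$ with $A=\{(t,t'):0\leq t\leq t'\leq T\}$, so all pairs $t\leq t'$ are handled at once (no translation) and there is no $|t-t'|^2$ penalty. After showing the maximizer has $t'_\alpha>t_\alpha$, it invokes only the \emph{super}solution property at $(t'_\alpha,x'_\alpha)$: the test function there has $\partial_{t'}\phi = -(T-t'_\alpha)^{-2}<0$, and since $\sF\geq 0$ the inequality $\partial_{t'}\phi-\sF(\nabla_{x'}\phi)\geq 0$ is immediately contradicted. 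The subsolution inequality, and hence any cancellation of $\sF$, is never used; consequently the spatial penalty $\chi(x)$ can be placed on $x$ alone to gain compactness without cost.
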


\begin{proof}
We argue by contradiction and suppose that
\begin{align*}
    \sup_{\substack{0\leq t\leq t' \\ x\in \C}}f(t,x) - f(t',x)>0.
\end{align*}
Due to $f\in\mathfrak{L}$, $\psi\in \Gamma_\mathrm{Lip}(\C)$, and $f(0,\cdot) =\psi$, the constant given by
\begin{gather*}
    K = \|\psi\|_\mathrm{Lip}\vee \left(\sup_{t>0,\,x\in\C}\frac{|f(t,x)-\psi(0,x)|}{t}\right)
\end{gather*}
is finite.
Let $\theta:\R\to\R_+$ satisfy \eqref{e.theta}.
For $T>0, R>2$ to be chosen, we set
\begin{gather*}
    \chi(x) = \theta\left(\left(1+|x|^2\right)^\frac{1}{2}-R\right),\quad\forall x\in  \C,
    \\
    \zeta(t,t',x) = \chi(x) + \frac{1}{T-t} + \frac{1}{T-t'},\quad\forall (t,t',x)\in [0,T)^2\times \C.
\end{gather*}
Choosing $T,R$ sufficiently large, we can ensure that
\begin{align}\label{e.mt.f-f-zeta>0}
    \sup_{\substack{t,t'\in A \\ x\in \C}}f(t,x) - f(t',x) - \zeta(t,t',x)>0,
\end{align}
where $A = \{(t,t'):0\leq t\leq t'\leq T\}$.
Note that, due to $R>2$, we have $\chi(0) =0$.

For each $\alpha>1$, we set
\begin{align*}
    \Psi_\alpha(t,t',x,x')= f(t,x)-f(t',x')-\alpha|x-x'|^2-\zeta(t,t',x),
    \quad\forall (t,t',x,x')\in A\times\C^2. 
\end{align*}
Note that $\Psi_\alpha(0,0,0,0)=-\frac{2}{T}$.

We show the existence of a maximizer of $\Psi_\alpha$ for each $\alpha>1$. Let $((t_{\alpha,n}, t'_{\alpha,n}, x_{\alpha,n}, x'_{\alpha,n}))_{n\in\N}$ be a maximizing sequence. Using $\Psi_\alpha(t_{\alpha,n}, t'_{\alpha,n}, x_{\alpha,n}, x'_{\alpha,n})\geq\Psi_\alpha(0,0,0,0)-1$, the definition of $K$, and the fact that $\chi(x)\geq |x|-R$, we get that
\begin{align*}
2KT + K|x_{\alpha,n} - x'_{\alpha,n}|-|x_{\alpha,n} - x'_{\alpha,n}|^2- (|x_{\alpha,n}|-R) - \frac{1}{T-t_{\alpha,n}}- \frac{1}{T-t'_{\alpha,n}} \geq -\frac{2}{T}-1.
\end{align*}
Hence, there are constants $T'<T$ and $C_1>0$ such that $(t_{\alpha,n}, t'_{\alpha,n}, x_{\alpha,n}, x'_{\alpha,n}) \in [0,T']^2\times (B(0,C_1))^2$ for all $\alpha, n$. Passing to the limit, we get a maximizer $(t_\alpha,t'_\alpha,x_\alpha,x'_\alpha)$ of $\Psi_\alpha$ which satisfies
\begin{align}\label{e.mt.bdd_x}
t_\alpha,\,t'_\alpha\in[0,T'] \quad\text{and}\quad|x_\alpha|,\, |x'_\alpha|\leq C_1,\quad\forall \alpha>1.
\end{align}

Using $\Psi_\alpha(t_\alpha, t'_\alpha, x_\alpha, x'_\alpha)\geq\Psi_\alpha(0,0,0,0)$ and the definition of $K$, we have
\begin{align*}
    2KT + K|x_\alpha - x'_\alpha|-\alpha|x_\alpha - x'_\alpha|^2 \geq -\frac{2}{T},
\end{align*}
which along with \eqref{e.mt.bdd_x} implies that
\begin{align}\label{e.mt.bdd|x-x'|}
|x_\alpha- x'_\alpha| \leq C_2 \alpha^{-\frac{1}{2}},\quad\forall \alpha>1,
\end{align}
for some constant $C_2>0$.

Due to \eqref{e.mt.bdd_x} and \eqref{e.mt.bdd|x-x'|}, up to subsequence, we have that $(t_\alpha, t'_\alpha, x_\alpha, x'_\alpha)$ converges to some $(t_\infty, t'_\infty, x_\infty, x_\infty) \in A\times \C^2$ as $\alpha\to\infty$. Since
\begin{align*}
\Psi_\alpha(t_\alpha, t'_\alpha, x_\alpha, x'_\alpha) \leq f(t_\alpha, x_\alpha) - f(t'_\alpha,x'_\alpha)-\zeta(t_\alpha,t'_\alpha,x_\alpha)
\end{align*}
and
\begin{align*}
\Psi_\alpha(t_\alpha, t'_\alpha, x_\alpha, x'_\alpha) \geq \Psi_\alpha(t,t',x,x)= f(t, x) - f(t',x)-\zeta(t,t',x),\quad\forall (t,t',x) \in A.
\end{align*}
Sending $\alpha$ to infinity and using \eqref{e.mt.f-f-zeta>0}, we obtain that
\begin{align*}
f(t_\infty, x_\infty) - f(t'_\infty,x_\infty)-\zeta(t_\infty,t'_\infty,x_\infty) >0.
\end{align*}
Since $\zeta \geq 0$ and $(t_\infty,t'_\infty)\in A$, we must have $t'_\infty > t_\infty$. Henceforth, we fix some large $\alpha$ such that $t'_\alpha > t_\alpha$. In particular $(t_\alpha,t'_\alpha)$ is in the interior of $A$ and $t'_\alpha>0$. 

Defining $\phi$ by $\Psi_\alpha(t_\alpha, t',x_\alpha,x') = \phi(t',x') - f(t',x')$, we conclude that $f-\phi$ achieves a local minimum at $(t'_\alpha,x'_\alpha)\in (0,\infty)\times \C$. Since $f$ solves $\HJ(\C,\sF)$, this implies that
\begin{align*}
    \partial_t\phi(t'_\alpha,x'_\alpha) - \sF \left(\nabla \phi(t'_\alpha,x'_\alpha) \right) \geq 0.
\end{align*}
We can compute that $\partial_t\phi(t'_\alpha,x'_\alpha) = - (T-t'_\alpha)^{-2}\leq -  T^{-2}$. Due to $\sF\geq 0$, the above display implies that $-T^{-2}\geq 0$, reaching a contradiction.
\end{proof}

\section{Lipschitzness of solutions}
\label{s.Lip}
We show that the solution of $\HJ(\C,\sF)$ is Lipschitz and obtain bounds on its Lipschitzness coefficient  (Proposition~\ref{p.lipschitz} for the spatial Lipschitzness and Proposition~\ref{p.lip_t} for the temporal Lipschitzness). In this section, we work with a Banach norm on $\cH$ comparable to the original Hilbert norm. This allows us to get results on not only the Lipschitzness measured in the Hilbert norm (Corollary~\ref{c.lipschitz}), but also the Lipschitzness measured in $l^p$ norms (Corollary~\ref{c.lip_l^p}). Recall that we are mainly interested in $\HJ(\mathring{\C},\H)$ in the statement of Theorem~\ref{t} where $\H$ possesses good properties only on $\C$. However, in this section, for convenience, we work with nonlinearities $\sF$ possessing good properties on $\cH$. Using results from Section~\ref{s.prelim}, we can later deduce the Lipschitzness of solutions of $\HJ(\mathring{\C},\H)$ as done in the proof of Theorem~\ref{t}~\eqref{i.main_lip} in Section~\ref{s.intro_main_result}.

A norm $\|\cdot\|$ on $\cH$ is said to be \textit{comparable} (to $|\cdot|$) if there is $C>0$ such that
\begin{align*}
    C^{-1}\|x\| \leq |x|\leq C\|x\|,\quad\forall x\in\cH.
\end{align*}
If $\|\cdot\|$ is differentiable at $x\in\cH$, we denote its differential at $x$ by $\nabla \|x\|$. For the Lipschitzness in the spatial variable with respect to $\|\cdot\|$, we need to impose the following condition:
\begin{align}\label{e.||||_diff}
     \text{$\|\cdot\|$ is differentiable at every $x\in\cH\setminus\{0\}$, and } \sup_{x\in \cH\setminus\{0\}}|\nabla\|x\||<\infty.
\end{align}
For any $g:\C\to\R$, define
\begin{align*}
    \|g\|_{\mathrm{Lip}\|\cdot\|} = \sup_{\substack{y,y'\in\C\\ y\neq y'}}\frac{|g(y)-g(y')|}{\|y-y'\|}.
\end{align*}
For consistency with other parts of the paper, we write $\|g\|_\mathrm{Lip} = \|g\|_{\mathrm{Lip}\,|\cdot|}$.

We denote by $\|\cdot\|_*$ the norm dual to $\|\cdot\|$ with respect to the inner product, which is given by
\begin{align}\label{e.dual}
    \|x\|_* = \sup_{y\in\cH,\,\|y\|\leq 1}\la y,x\ra,\quad\forall x\in\cH.
\end{align}

\subsection{Lipschitzness in the spatial variable}

\begin{proposition}[Spatial Lipschitzness]\label{p.lipschitz}
Let $\|\cdot\|$ be comparable and satisfy \eqref{e.||||_diff}.
If $f\in \mathfrak{L}$ solves $\HJ(\C,\sF;\psi)$ for some $\psi\in\Gamma_\mathrm{Lip}(\C)$ and $\sF\in\Gamma_\mathrm{locLip}(\cH)$, then $\sup_{t\in\R_+}\|f(t,\cdot)\|_{\mathrm{Lip}\|\cdot\|} =\|\psi \|_{\mathrm{Lip}\|\cdot\|}$.
\end{proposition}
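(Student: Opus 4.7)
The trivial lower bound $\sup_{t\in\R_+}\|f(t,\cdot)\|_{\mathrm{Lip}\|\cdot\|}\geq\|\psi\|_{\mathrm{Lip}\|\cdot\|}$ follows from $f(0,\cdot)=\psi$, so the content is the reverse inequality, amounting to $f(t,x)-f(t,y)\leq L\|x-y\|$ for all $(t,x,y)\in\R_+\times\C^2$, where $L:=\|\psi\|_{\mathrm{Lip}\|\cdot\|}$. The plan is to argue by contradiction and double the spatial variables, in parallel with the penalization scheme of Proposition~\ref{p.cp_H}, with $f$ simultaneously acting as a subsolution in the first pair of variables and as a supersolution in the second pair.

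Suppose the upper bound fails. To deal with the non-differentiability of $\|\cdot\|$ at the origin, I would first smooth it: set $N_\eta(z):=\sqrt{\|z\|^2+\eta^2}-\eta$. Condition~\eqref{e.||||_diff} implies that $\|\cdot\|^2$ is of class $C^1$ on $\cH$, so $N_\eta\in C^1(\cH)$ with $\sup_{z,\eta}|\nabla N_\eta(z)|<\infty$, and $N_\eta\leq\|\cdot\|$ with $N_\eta\nearrow\|\cdot\|$ as $\eta\downarrow 0$. Fix some $v\in\itr(\C\cap\C^*)$ (allowed by Lemma~\ref{l.cone_itr}); by the argument in the proof of Lemma~\ref{l.ext_loclip}~\eqref{i.ext_loclip}, $\la v,\cdot\ra$ is coercive on $\C$. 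For $T,R$ large, $\alpha>1$, and $\eta,\delta>0$ small, I would consider
\begin{align*}
\Psi_\alpha(t,s,x,y)=f(t,x)-f(s,y)-LN_\eta(x-y)-\tfrac{\alpha}{2}(t-s)^2-\chi(t,x)-\chi(s,y)-\delta\la v,x+y\ra
\end{align*}
on $[0,T)^2\times\C^2$, where $\chi$ combines the barrier $(T-t)^{-1}$ with a radial cutoff as in the proof of Proposition~\ref{p.cp_H} that vanishes on $\{|x|\leq R\}$. Since $N_\eta\leq\|\cdot\|$, the failure of the desired upper bound still forces $\sup\Psi_\alpha>0$ for appropriate parameters; the temporal growth bound from $f\in\mathfrak{L}$ together with the coercivity of $\la v,\cdot\ra$ produce a maximizer $(t_\alpha,s_\alpha,x_\alpha,y_\alpha)$ with $(t_\alpha,s_\alpha)\in(0,T')^2$ for some $T'<T$ independent of $\alpha$ and with $|x_\alpha|,|y_\alpha|$ uniformly bounded, so that $R$ may be chosen once and for all to make the radial part of $\chi$ (and hence its gradient) vanish at the maximizer.

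Applying the subsolution inequality at $(t_\alpha,x_\alpha)$ and the supersolution inequality at $(s_\alpha,y_\alpha)$---both valid because the associated test functions are smooth thanks to $N_\eta$---and subtracting, the shared gradient $Lp_\alpha:=L\nabla N_\eta(x_\alpha-y_\alpha)$ enters both invocations of $\sF$, producing
\begin{align*}
(T-t_\alpha)^{-2}+(T-s_\alpha)^{-2}\leq \sF(Lp_\alpha+\delta v)-\sF(Lp_\alpha-\delta v).
\end{align*}
Because $|Lp_\alpha|$ is uniformly bounded via~\eqref{e.||||_diff}, the arguments of $\sF$ stay in a fixed ball on which $\sF$ has finite Lipschitz coefficient $V$, so the right-hand side is at most $2V\delta|v|$. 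Choosing $\delta$ sufficiently small contradicts the uniform positive lower bound $2(T-T')^{-2}$ of the left-hand side. The chief difficulty is the careful sequencing of the parameters $\eta,\delta,R,T,\alpha$ so that the positivity of $\sup\Psi_\alpha$ survives and the maximizer stays in the interior of the time interval while the spatial-gradient perturbations of $\sF$ are driven to zero; the exact cancellation of the norm-gradient term on both sides of the sub/super inequality is the mechanism that singles out $L$ as the sharp Lipschitz bound.
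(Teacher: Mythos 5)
Your proposal is essentially correct and, while it follows the same global strategy as the paper (contradiction, doubling only the temporal variable, pairing the spatial variables via the target norm, then applying the sub/super inequalities at a maximizer), it differs in two substantive mechanisms. First, you regularize the norm to a $C^1$ function $N_\eta$; the paper instead keeps $L\|x-x'\|$ and argues (via comparison with \eqref{e.pre_lip.contr_lip_3} along a subsequential limit $\alpha\to\infty$) that any maximizer eventually satisfies $x_\alpha\neq x'_\alpha$, so $\|\cdot\|$ is differentiable at $x_\alpha-x'_\alpha$ by \eqref{e.||||_diff}. Your smoothing trades that nondegeneracy argument for a new small parameter; since $N_\eta\leq\|\cdot\|$ it does not threaten the positivity of the sup, but you do need $\eta$ small enough ($L\eta<2/T$, say) before you can rule out maximizers with $t_\alpha=0$ or $s_\alpha=0$, which is a constraint worth making explicit. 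Second, you use the penalization $\delta\la v,x+y\ra$ with $v\in\itr(\C\cap\C^*)$ to make the two viscosity inequalities see the perturbed gradients $Lp_\alpha\pm\delta v$, and the contradiction is $(T-t_\alpha)^{-2}+(T-s_\alpha)^{-2}\leq 2V\delta|v|$ for $\delta$ small. The paper instead subtracts $\delta t$ and builds the radial cutoff as $\theta((1+|x|^2)^{1/2}+Vt-R)$ precisely so that $\partial_t\chi\geq V|\nabla\chi|$ (see \eqref{e.pre_lip.dt_chi>V|grad_chi|}), which absorbs the $V|\nabla\chi|$ contributions and yields the contradiction $\delta\leq 0$ directly, without any constraint linking $\delta$ to $T$. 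Your variant is arguably cleaner once the radial gradient is known to vanish at the maximizer, but note that making this precise requires a bit of care: the maximizer bound coming from $\delta\la v,\cdot\ra$ scales like $1/\delta$, so $R$ must be chosen after $\delta$ (and in fact, given $\delta\la v,x+y\ra$, the radial cutoff is redundant for coercivity and could be dropped, at the cost of no longer being able to literally reuse the barrier from Proposition~\ref{p.cp_H}). Finally, both your construction and the paper's produce test functions that are only $C^1$ rather than smooth near the maximizer; this is standard for first-order equations but is an implicit appeal to the $C^1$/smooth equivalence of test functions, not something either argument spells out.
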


We comment that, if the norm is $|\cdot|$, \rv{under the additional assumption $\sF\in\Gamma^\nearrow_\mathrm{locLip}(\cH)$,} there is a simpler proof than the one below. We can use Lemma~\ref{l.ext_loclip}~\eqref{i.ext_lip} to find an extension $\bar \psi$ of $\psi$ to $\cH$ with $\|\bar \psi\|_\mathrm{Lip}=\|\psi\|_\mathrm{Lip}$. Let $\bar f$ solve $\HJ(\cH,\sF;\bar\psi)$. It is classical that $\sup_{t\in\R_+}\|\bar f(t,\cdot)\|_\mathrm{Lip}= \|\bar \psi\|_\mathrm{Lip}$. Since both $\bar f$ and $f$ solves $\HJ(\mathring\C,\sF;\psi)$, the comparison principle (Proposition~\ref{p.cp_H}~\ref{i.case2}) implies $\bar f\lfloor_{\R_+\times\C}=f$. Then, for any $z\in\cH$, we consider translations $\bar g^\pm = \bar f(\cdot,\cdot+z)\pm \|\psi\|_\mathrm{Lip}|z|$ and the restrictions $g^\pm$ of $\bar g^\pm$ to $\R_+\times \C$ which solve $\HJ(\mathring\C,\sF)$. Proposition~\ref{p.cp_H}~\ref{i.case2} yields $g^-\leq f\leq g^+$, from which we can deduce the announced result.

If the norm is more general, we do not know the existence of an extension of $\psi$ that preserves the Lipschitz coefficient. Hence, we need a more convoluted argument.

\begin{proof}
Since $\psi$ is Lipschitz (with respect to $|\cdot|$) and $\|\cdot\|$ is comparable, we have $\|\psi\|_{\mathrm{Lip}\|\cdot\|}<\infty$.
Set $L = \|\psi\|_{\mathrm{Lip}\|\cdot\|}$. We argue by contradiction and assume that there is $(\underline t, \underline x, \underline x')\in (0,\infty)\times \C\times\C$ such that
\begin{align*}
    f\left(\underline t,\underline x\right) - f\left(\underline t,\underline x'\right) > L\|\underline x-\underline x'\|.
\end{align*}
Since $f(0,\cdot) = \psi$, we have
\begin{align}\label{e.pre_lip.contr_lip_1}
    \sup_{\substack{t\in\R_+ \\ x,x'\in\C}}f\left(t,x\right) - f\left(t,x'\right) - L\|x-x'\|> 0\geq \sup_{x,x'\in \C} f\left(0,x\right) - f\left(0,x'\right) - L\|x-x'\|.
\end{align}
Due to $f\in \mathfrak{L}$ and $f(0,\cdot)=\psi$, there is a constant $K>0$ such that
\begin{align}\label{e.pre_lip.|f-psi|<Kt}
    \sup_{x\in\C}|f(t,x) - \psi(x)|\leq Kt, \quad\forall t\in\R_+.
\end{align}

Choosing $T>0$ sufficiently large, we can modify \eqref{e.pre_lip.contr_lip_1} into
\begin{align}
    \sup_{\substack{t\in [0,T) \\ x,x'\in\C}}f\left(t,x\right) - f\left(t,x'\right)  - L\|x-x'\|-\frac{2}{T-t} \notag
    \\
    > 0 > \sup_{x,x'\in \C} f\left(0,x\right) - f\left(0,x'\right) - L\|x-x'\|-\frac{2}{T}. \label{e.pre_lip.contr_lip_2}
\end{align}
We set
\begin{gather}
    C = \sup_{x\in \cH\setminus\{0\}}|\nabla\|x\||<\infty, \notag
    \\
    V=\sup_{\substack{y,y'\in B(0,LC+1) \\ y\neq y'}}\frac{|\sF(y)-\sF(y')|}{|y-y'|}. \label{e.pre_lip.V>|F|}
\end{gather}
Let $\theta:\R\to\R_+$ satisfy \eqref{e.theta}.
For $\delta,R>0$ to be chosen, we set, for $(t,t',x,x')\in \R_+^2\times\C^2$,
\begin{align*}
    \zeta_1(t,t') & = \frac{1}{T-t} + \frac{1}{T-t'},
    \\
    \chi(t,x) & = \theta\left((1+|x|^2)^\frac{1}{2}+Vt-R\right)
    \\
    \zeta_2(t,t',x,x') & = \chi(t,x) +\chi(t',x').
\end{align*}
Building on \eqref{e.pre_lip.contr_lip_2}, we choose $\delta\in(0,1)$ sufficiently small and $R>2$ sufficiently large so that
\begin{align}
    \sup_{\substack{t\in [0,T) \\ x,x'\in\C}}f\left(t,x\right) - f\left(t,x'\right)  - L\|x-x'\|-\delta t-\zeta_1(t,t)-\zeta_2(t,t,x,x') \notag
    \\
    > 0 > \sup_{x,x'\in \C} f\left(0,x\right) - f\left(0,x'\right) - L\|x-x'\|-\zeta_1(0,0)-\zeta_2(0,0,x,x'). \label{e.pre_lip.contr_lip_3}
\end{align}
Before proceeding, we record the following useful results: for all $t,t',x,x'$,
\begin{align}
    \zeta_2(t,t',x,x') & \geq |x|+|x'|-2R, \label{e.pre_lip.zeta_2>|x|+|x'|-2R}
    \\
    |\nabla \chi(t,x)| & \leq 1, \label{e.pre_lip.nabla_chi<1}
    \\
    \partial_t\chi(t,x) & \geq V |\nabla \chi(t,x)|. \label{e.pre_lip.dt_chi>V|grad_chi|}
\end{align}
For each $\alpha>1$, we define, for $(t,t',x,x') \in [0,T)^2\times \C^2$,
\begin{align*}
    \Psi_\alpha(t,t',x,x') = f(t,x) - f(t',x') - L\|x-x'\|-\delta t- \alpha|t-t'|^2
    -\zeta_1(t,t')-\zeta_2(t,t',x,x').
\end{align*}

We show the existence of a maximizer of $\Psi_\alpha$ for each $\alpha>1$.
Let $((t_{\alpha,n},t'_{\alpha,n},x_{\alpha,n},x'_{\alpha,n}))_{n=1}^\infty$ be a maximizing sequence. Note that, due to $R>2$, $\zeta_2(0,0,0,0)=0$. Hence,
it is clear from the definition that
\begin{align*}
    \Psi_\alpha(0,0,0,0) = -2T^{-1},\quad\forall \alpha>1.
\end{align*}
Also, by \eqref{e.pre_lip.|f-psi|<Kt} and the definition of $L$, we have
\begin{align}\label{e.pre_lip.f-f-L}
    f(t,x) - f(t',x') -L\|x-x'\|  \leq 2KT,\quad\forall (t,t',x,x')\in[0,T)^2\times \C^2.
\end{align}
Using these together with \eqref{e.pre_lip.zeta_2>|x|+|x'|-2R} and $\Psi_\alpha (t_{\alpha,n},t'_{\alpha,n},x_{\alpha,n},x'_{\alpha,n})\geq \Psi_\alpha(0,0,0,0)-1$ for sufficiently large $n$, we get
\begin{align*}
    2KT -\zeta_1(t_{\alpha,n},t'_{\alpha,n})- (|x_{\alpha,n}|+|x'_{\alpha,n}|-2R)  \geq - 2T^{-1}-1.
\end{align*}
Hence, there are constants $T'\in(0,T)$ and $C_1>0$ such that
\begin{align*}
    t_{\alpha,n},\,t'_{\alpha,n}\in [0,T'],\qquad |x_{\alpha,n}|,\,|x'_{\alpha,n}|\leq C_1
\end{align*}
for sufficiently large $n$ and every $\alpha$. Hence, we can conclude the existence of a maximizer of $\Psi_\alpha$ denoted by $(t_\alpha, t'_\alpha, x_\alpha,x'_\alpha)$, which also satisfies
\begin{align*}
    t_\alpha,\,t'_\alpha\in [0,T'],\qquad |x_\alpha|,\,|x'_\alpha|\leq C_1.
\end{align*}

Using $\Psi_\alpha(t_\alpha, t'_\alpha, x_\alpha,x'_\alpha)\geq \Psi_\alpha(0,0,0,0)= -2T^{-1}$ and \eqref{e.pre_lip.f-f-L}, we get
\begin{align*}
    2KT - \alpha|t_\alpha-t'_\alpha|^2 \geq -2T^{-1},
\end{align*}
which implies that $\lim_{\alpha\to\infty}|t_\alpha - t'_\alpha| =0$. Hence, up to a subsequence, we have that $(t_\alpha,t'_\alpha,x_\alpha,x'_\alpha)$ converges to some $(t_\infty,t_\infty,x_\infty,x'_\infty)$ as $\alpha\to\infty$. Since
\begin{align*}
    \Psi_\alpha(t_\alpha,t'_\alpha,x_\alpha,x'_\alpha)\leq f(t_\alpha,x_\alpha) - f(t'_\alpha,x'_\alpha)-L\|x_\alpha-x'_\alpha\|-\delta t_\alpha
    - \zeta_1(t_\alpha,t'_\alpha)-\zeta_2(t_\alpha,t'_\alpha,x_\alpha,x'_\alpha),
\end{align*}
and
\begin{align*}
    \Psi_\alpha(t_\alpha,t'_\alpha,x_\alpha,x'_\alpha) \geq \Psi_\alpha(t,t,x,x')
    = f(t,x)-f(t,x')-L\|x-x'\|-\delta t
    \\
    -\zeta_1(t,t)-\zeta_2(t,t,x,x')
\end{align*}
for every $(t,x,x')\in [0,T)\times\C\times\C$, by sending $\alpha\to\infty$, we get that
\begin{align*}
    f(t_\infty,x_\infty) - f(t_\infty,x'_\infty)-L\|x_\infty-x'_\infty\|-\delta t_\infty
    - \zeta_1(t_\infty,t_\infty)-\zeta_2(t_\infty,t_\infty,x_\infty,x'_\infty) 
\end{align*}
is greater than or equal to the left-hand side in \eqref{e.pre_lip.contr_lip_3}. Therefore, by \eqref{e.pre_lip.contr_lip_3}, we must have $t_\infty>0$. Also, the above is strictly negative if $x_\infty = x'_\infty$, while the left-hand side in \eqref{e.pre_lip.contr_lip_3} is strictly positive. Hence, we must have $x_\infty\neq x_\infty$. In conclusion, for sufficiently large $\alpha$, we have $t_\alpha,\,t'_\alpha>0$ and $x_\alpha \neq x'_\alpha$.

Fix any such $\alpha$. Using that $(t,x)\mapsto \Psi_\alpha(t,t'_\alpha,x,x'_\alpha)$ and $(t',x')\mapsto \Psi_\alpha(t_\alpha,t',x_\alpha,x')$ achieve local maximums at $(t_\alpha,x_\alpha)$ and $(t'_\alpha,x'_\alpha)$, respectively, and using that $f$ solves $\HJ(\C,\sF)$, we have
\begin{align*}
    \delta + 2\alpha(t_\alpha-t'_\alpha) + \frac{1}{(T-t_\alpha)^2}+\partial_t\chi(t_\alpha,x_\alpha) - \sF\left(L\nabla\|x_\alpha-x'_\alpha\|+\nabla\chi(t_\alpha,x_\alpha)\right)\leq 0,
    \\
    2\alpha(t_\alpha-t'_\alpha) - \frac{1}{(T-t'_\alpha)^2}-\partial_t\chi(t'_\alpha,x'_\alpha) - \sF\left(L\nabla\|x_\alpha-x'_\alpha\|-\nabla\chi(t'_\alpha,x'_\alpha)\right)\geq 0.
\end{align*}
By assumption on the differential of $\|\cdot\|$ and \eqref{e.pre_lip.nabla_chi<1}, the norms (in $|\cdot|$) of terms inside $\sF$ in the above are bounded by $LC+1$. Taking the difference of the above relations, and using \eqref{e.pre_lip.V>|F|} and \eqref{e.pre_lip.dt_chi>V|grad_chi|}, we obtain $\delta\leq 0$, which is absurd. Therefore, the desired result must hold.
\end{proof}

\subsection{Lipschitzness in the temporal variable}

Recall the notation for the norm dual to $\|\cdot\|$ in~\eqref{e.dual}. For the supremum on the right-hand side of \eqref{e.sup_lip_t} to be taken over a subset of $\C$, we need to restrict $f-\phi$ to achieve a local extremum only in the interior for a smooth test function $\phi$. Hence, the following proposition is stated for $\HJ(\mathring{\C},\sF)$. Since a solution of $\HJ(\C,\sF)$ is obviously a solution of $\HJ(\mathring{\C},\sF)$ (see Definition~\ref{d.vs}), the following result holds for solutions of $\HJ(\C,\sF)$.

\begin{proposition}[Temporal Lipschitzness]\label{p.lip_t}
Let $\|\cdot\|$ be comparable.
If $f\in \mathfrak{M}\cap \mathfrak{L}$ solves $\HJ(\mathring\C,\sF;\psi)$ for some $\psi\in \Gamma_\mathrm{Lip}(\C)$ and some locally bounded $\sF:\cH\to\R$, and $\sup_{t\in\R_+}\|f(t,\cdot)\|_{\mathrm{Lip}\|\cdot\|} = \|\psi\|_{\mathrm{Lip}\|\cdot\|}$, then
 \begin{align}\label{e.sup_lip_t}
     \sup_{x\in\C}\|f(\cdot,x)\|_\mathrm{Lip} \leq \sup_{\substack{v\in\C\\ \|v\|_* \leq \|\psi\|_{\mathrm{Lip}\|\cdot\|}}}|\sF(v)|
 \end{align}
\end{proposition}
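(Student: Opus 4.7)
I will prove the upper direction $f(t,x)-f(s,x)\leq M(t-s)$ for all $t\geq s\geq 0$ and $x\in\C$, where $L=\|\psi\|_{\mathrm{Lip}\|\cdot\|}$ and $M=\sup\{|\sF(v)|:v\in\C,\,\|v\|_*\leq L\}$ (which is finite because $\{v\in\C:\|v\|_*\leq L\}$ is bounded by comparability of $\|\cdot\|_*$ with $|\cdot|$, and $\sF$ is locally bounded). The lower direction $f(s,x)-f(t,x)\leq M(t-s)$ then follows by an analogous argument using the supersolution property in place of the subsolution. I argue by contradiction: assume $\sup_{t\geq s\geq 0,\,x\in\C}(f(t,x)-f(s,x)-M(t-s))>2\eta>0$.

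The main device is ``double doubling'' in the spirit of the proof of Proposition~\ref{p.drop_bdy}: place the non-smooth $\eps/d$ penalty (which forces interior maximizers) on auxiliary primed variables, while keeping the unprimed variables, on which viscosity will be applied, controlled by smooth penalties only. With $\chi$ smooth and growing at infinity, and $d=d_\C$, consider
\begin{align*}
\Phi(t,s,x,y,x',y') &= f(t,x)-f(s,y)-M(t-s)-\alpha|x-y|^2-\tfrac{1}{\theta^2}\bigl(|x-x'|^2+|y-y'|^2\bigr)
\\
&\quad -\tfrac{1}{T-t}-\tfrac{1}{T-s}-\mu\bigl(\chi(x')+\chi(y')\bigr)-\tfrac{\eps}{d(x')}-\tfrac{\eps}{d(y')}
\end{align*}
on $\{(t,s,x,y,x',y'):t\geq s\geq 0,\,x,y\in\C,\,x',y'\in\mathring\C\}$. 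Choosing $T$ large and $\mu,\eps>0$ small ensures $\sup\Phi>\eta$; on the boundary $\{t=s\}$, the spatial Lipschitz bound $f(t,x)-f(t,y)\leq L\|x-y\|\leq LC|x-y|$ together with the $-\alpha|x-y|^2$ term gives $\Phi|_{\{t=s\}}\leq L^2C^2/(4\alpha)-2/T<\eta$ for $\alpha$ large (where $C$ is the comparability constant), so the maximum is interior in that component. Hence $\Phi$ achieves its maximum at some $(t_0,s_0,x_0,y_0,x'_0,y'_0)$ with $t_0>s_0\geq 0$, $t_0,s_0<T$, and $x'_0,y'_0\in\mathring\C$; the $\theta^{-2}$ penalty forces $|x_0-x'_0|,|y_0-y'_0|=O(\theta)$, so $x_0,y_0\in\mathring\C$ for $\theta$ small.

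Now $(t,x)\mapsto\Phi(t,s_0,x,y_0,x'_0,y'_0)$ attains its maximum at $(t_0,x_0)\in(0,T)\times\mathring\C$, and the dependence on $(t,x)$ apart from $f(t,x)$ is the smooth function $\phi(t,x)=Mt+\alpha|x-y_0|^2+\theta^{-2}|x-x'_0|^2+(T-t)^{-1}+\mathrm{const}$. The subsolution inequality gives $M+(T-t_0)^{-2}\leq\sF(\nabla\phi(t_0,x_0))$. To bound $\nabla\phi(t_0,x_0)$, I test the local-max condition $f(t_0,x_0+hw)-f(t_0,x_0)\leq\phi(t_0,x_0+hw)-\phi(t_0,x_0)$ along directions $w\in\C^*$ (using $f(t_0,\cdot)\in\Gamma^\nearrow(\C)$ from $f\in\mathfrak{M}$) and let $h\to 0^+$ to get $\langle\nabla\phi(t_0,x_0),w\rangle\geq 0$, so $\nabla\phi(t_0,x_0)\in(\C^*)^*=\C$; testing along arbitrary $z\in\cH$ (using $x_0\in\mathring\C$ and the $L$-Lipschitzness of $f(t_0,\cdot)$ in $\|\cdot\|$) and then replacing $z$ by $-z$ yields $|\langle\nabla\phi(t_0,x_0),z\rangle|\leq L\|z\|$, i.e., $\|\nabla\phi(t_0,x_0)\|_*\leq L$. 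By the definition of $M$, $\sF(\nabla\phi(t_0,x_0))\leq M$, contradicting the subsolution inequality. The lower direction is handled by replacing $\Phi$ with $f(s,y)-f(t,x)-M(t-s)-\cdots$: there $(t,x)\mapsto\Phi'$ has a local max in $-f(t,x)+\text{smooth}(t,x)$, so $f-\phi'$ has a local \emph{minimum} at $(t_0,x_0)$, the supersolution inequality forces $\sF(\nabla\phi'(t_0,x_0))\leq -M-(T-t_0)^{-2}<-M$, and the same one-sided probing (now using $x_0-hw$ for $w\in\C^*$ and the $\C^*$-monotonicity $f(t_0,x_0-hw)\leq f(t_0,x_0)$) still gives $\nabla\phi'(t_0,x_0)\in\C\cap\{\|\cdot\|_*\leq L\}$, contradicting $|\sF(\nabla\phi'(t_0,x_0))|\leq M$. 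The main obstacle is the interior-forcing bookkeeping itself: we cannot simply insert $\eps/d(x)$ into the test function (it is not smooth), and we cannot pass to $\HJ(\C,\sF)$ via Proposition~\ref{p.drop_bdy} because $\sF$ here is only assumed to be locally bounded and not $\C^*$-increasing.
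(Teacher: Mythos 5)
Your proof is correct and relies on the same core ideas as the paper's own proof of Proposition~\ref{p.lip_t}: a doubling-of-variables comparison function with the interior-forcing penalty $\eps/d_\C(\cdot)$ placed on an auxiliary slot (so it never enters the smooth test function), followed by exploiting $f\in\mathfrak{M}$ and the spatial Lipschitz bound at the interior touching point to conclude $\nabla\phi(t_0,x_0)\in\C$ and $\|\nabla\phi(t_0,x_0)\|_*\leq\|\psi\|_{\mathrm{Lip}\|\cdot\|}$, so that $|\sF(\nabla\phi(t_0,x_0))|$ is bounded by the right-hand side of~\eqref{e.sup_lip_t} and the temporal derivative forces a contradiction. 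The tactical differences from the paper are worth noting. First, you split into the two time directions and use the smooth one-sided penalty $M(t-s)$, whereas the paper uses $L|t-t'|$ and must verify $t_\alpha\neq t'_\alpha$ (via the $\alpha\to\infty$ analysis of the maximizer) before the temporal penalty is smooth near the touching point; your version therefore applies only the subsolution (resp., supersolution) inequality in each half. Second, you introduce two auxiliary variables $x',y'$ each carrying an $\eps/d$ penalty and tied to $x,y$ by the stiff $\theta^{-2}$ quadratic; only the $x'$ slot is actually used (since viscosity is only invoked at $(t_0,x_0)$), so the $y'$ doubling is harmless but unnecessary. The paper instead uses a single auxiliary $y$ tied to $x$ with the same coefficient $\alpha$ as the $x$--$x'$ doubling, and obtains $x_\alpha, x'_\alpha, y_\alpha\in\mathring\C$ simultaneously by letting $\alpha\to\infty$ collapse them onto a common interior point before fixing $\alpha$; your version reaches the contradiction directly once the parameters are fixed. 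Finally, your closing remark is exactly the reason the paper itself states this proposition for $\HJ(\mathring\C,\sF)$: with $\sF$ merely locally bounded, neither Lemma~\ref{l.equiv} nor Proposition~\ref{p.drop_bdy} is available, so the $1/d$ bookkeeping cannot be avoided.
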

\begin{proof}
We set $L$ to be the right-hand side of \eqref{e.sup_lip_t}.
We argue by contradiction and assume that there is $(\underline t, \underline t', \underline x)\in \R_+\times\R_+\times\C$ such that
\begin{align}\label{e.lt.contr_lip_1}
    f\left(\underline t,\underline x\right) - f\left(\underline t',\underline x\right) - L|\underline t-\underline t'| > 0.
\end{align}

By the assumption on $f$, we have that
\begin{align*}
    K = \left(\sup_{t\in\R_+}\|f(t,\cdot)\|_{\mathrm{Lip}}\right)\vee \left(\sup_{t>0,\ x\in\C}\frac{|f(t,x)-f(0,x)|}{t}\right)
\end{align*}
is finite, where we used the comparability of $\|\cdot\|$ to ensure the finiteness of the first term on the right.

Choosing $T>0$ sufficiently large and $\delta>0$ sufficiently small, we can obtain from \eqref{e.lt.contr_lip_1} that
\begin{align*}
    \sup_{\substack{t,t'\in [0,T) \\ x\in\C}}f\left(t,x\right) - f\left(t',x\right)  - L|t-t'|-\zeta_1(t,t') -\frac{\delta}{d(x)}-\delta|x| >0 
\end{align*}
where $d = d_\C$ defined in \eqref{e.d} and
\begin{gather*}  
\zeta_1(t,t')  = \frac{1}{T-t} + \frac{1}{T-t'},\quad\forall (t,t')\in [0,T)^2,
\end{gather*}

Let $\theta:\R\to\R_+$ satisfy \eqref{e.theta}.
For $R>0$ to be chosen, we set
\begin{align*}
    \zeta_2(x) & = \theta\left((1+|x|^2)^\frac{1}{2}-R\right),\quad x\in\C.
\end{align*}
Fix any $y_0\in \mathring \C$, and set
\begin{align*}
    C_0 = \frac{2}{T}+\frac{\delta}{d(y_0)} +  |y_0|.
\end{align*}
We choose $R>0$ sufficiently large so that
\begin{align}\label{e.lt.zeta_2(y_0)=0}
    \zeta_2(y_0) =0
\end{align}
and
\begin{align}
    \sup_{\substack{t,t'\in [0,T) \\ x\in\C}}f\left(t,x\right) - f\left(t',x\right)  - L|t-t'|-\zeta_1(t,t')-\zeta_2(x)-\frac{\delta}{d(x)}-\delta |x| >0. \label{e.lt.contr_lip_3}
\end{align}
Also, note that
\begin{align}
    \zeta_2(x) & \geq |x|-R,\quad\forall x\in\C. \label{e.lt.zeta_2>|x|+|x'|-2R}
\end{align}

For each $\alpha>1$, we define, for $(x,y)\in\C^2$,
\begin{align*}
    \zeta_3(x,y) = \alpha|x-y|^2 +\frac{\delta}{d(y)}+\delta |y|,
\end{align*}
and, for $(t,t',x,x',y) \in [0,T)^2\times \C^3$,
\begin{align*}
    \Psi_\alpha(t,t',x,x',y) = f(t,x) - f(t',x') - L|t-t'|- \alpha|x-x'|^2
    -\zeta_1(t,t')-\zeta_2(x)-\zeta_3(x,y).
\end{align*}

We show the existence of a maximizer of $\Psi_\alpha$ for each $\alpha>1$.
Let $((t_{\alpha,n},t'_{\alpha,n},x_{\alpha,n},x'_{\alpha,n},y_{\alpha,n}))_{n=1}^\infty$ be a maximizing sequence.
Using \eqref{e.lt.zeta_2(y_0)=0}, we have
\begin{align*}
    \Psi_\alpha(0,0,y_0,y_0,y_0) =-C_0,\quad\forall \alpha>1.
\end{align*}
Also, by the definitions of $K$, we have
\begin{align}\label{e.lt.f-f-L}
    f(t,x) - f(t',x')  \leq 2KT+K|x-x'|,\quad\forall (t,t',x,x')\in[0,T)^2\times \C^2.
\end{align}
Using these together with $\alpha>1$, \eqref{e.lt.zeta_2>|x|+|x'|-2R}, and $\Psi_\alpha (t_{\alpha,n},t'_{\alpha,n},x_{\alpha,n},x'_{\alpha,n},y_{\alpha,n})\geq \Psi_\alpha(0,0,y_0,y_0,y_0)-1$ for sufficiently large $n$, we get
\begin{align*}
    2KT+K|x_{\alpha,n}-x'_{\alpha,n}|-|x_{\alpha,n}-x'_{\alpha,n}|^2 -\zeta_1(t_{\alpha,n},t'_{\alpha,n})- (|x_{\alpha,n}|-R) - \delta|y_{\alpha,n}| \geq - C_0-1.
\end{align*}
Hence, there are constants $T'\in(0,T)$ and $C_1>0$ such that
\begin{align*}
    t_{\alpha,n},\,t'_{\alpha,n}\in [0,T'],\qquad |x_{\alpha,n}|,\,|x'_{\alpha,n}|,\,|y_{\alpha,n}|\leq C_1
\end{align*}
for sufficiently large $n$ and every $\alpha$. So, we can conclude the existence of a maximizer of $\Psi_\alpha$ denoted by $(t_\alpha, t'_\alpha, x_\alpha,x'_\alpha, y_\alpha)$, which also satisfies
\begin{align}\label{e.lt.tt,xx'yy'_bdd}
    t_\alpha,\,t'_\alpha\in [0,T'],\qquad |x_\alpha|,\,|x'_\alpha|,\,|y_\alpha|\leq C_1.
\end{align}

Using $\Psi_\alpha(t_\alpha, t'_\alpha, x_\alpha,x'_\alpha, y_\alpha)\geq \Psi_\alpha(t_\alpha, t'_\alpha, x_\alpha,x_\alpha, y_\alpha)$, we get
\begin{align*}
    -f(t'_\alpha,x'_\alpha) - \alpha|x_\alpha-x'_\alpha|^2 \geq -f(t'_\alpha,x_\alpha),
\end{align*}
which along with the definition of $K$ implies that
\begin{align}\label{e.lt.|x-x'|}
    |x_\alpha-x'_\alpha|\leq K\alpha^{-1},\quad\forall \alpha>1.
\end{align}
Using $\Psi_\alpha(t_\alpha, t'_\alpha, x_\alpha,x'_\alpha, y_\alpha)\geq \Psi_\alpha(0,0,y_0,y_0,y_0) =-C_0$ and \eqref{e.lt.f-f-L}, we have
\begin{align*}
    2KT + K |x_\alpha-x'_\alpha|-\frac{\delta}{d(y_\alpha)}\geq -C_0
\end{align*}
which implies that, for some constant $C_2>0$,
\begin{align}\label{e.lt.d(y),d(y')>}
    d(y_\alpha) >C_2,\quad\forall \alpha>1.
\end{align}
Therefore, $y_\alpha\in\mathring \C$ for all $\alpha>1$. In particular, $y_\alpha\neq 0$.

Since $y\mapsto \Psi_\alpha(t_\alpha, t'_\alpha,x_\alpha,x'_\alpha,y)$ has a local maximum at $y_\alpha$, we can verify using Lemma~\ref{l.d}~\eqref{i.g-1/d} that
\begin{align*}
    \left(d(y_\alpha)\right)^2\left(2\delta^{-1}\alpha(y_\alpha-x_\alpha) + \frac{y_\alpha}{|y_\alpha|}\right) \in D^+d(y_\alpha).
\end{align*}
Hence, due to \eqref{e.lt.d(y),d(y')>} and Lemma~\ref{l.d}~\eqref{i.Dd}, there is a constant $C_3>0$ such that
\begin{align}
    |x_\alpha-y_\alpha|\leq C_3\alpha^{-1},\quad\forall \alpha>1.  \label{e.lt.2alpha|x-y|}
\end{align}

Using \eqref{e.lt.tt,xx'yy'_bdd}, \eqref{e.lt.|x-x'|}, and \eqref{e.lt.2alpha|x-y|}, we have that $(t_\alpha,t'_\alpha,x_\alpha,x'_\alpha, y_\alpha)$ converges along some subsequence to some $(t_\infty,t'_\infty,x_\infty,x_\infty, x_\infty)$ as $\alpha\to\infty$. Since
\begin{align*}
    \Psi_\alpha(t_\alpha,t'_\alpha,x_\alpha,x'_\alpha, y_\alpha)\leq f(t_\alpha,x_\alpha) - f(t'_\alpha,x'_\alpha)-L|t_\alpha-t'_\alpha|
    - \zeta_1(t_\alpha,t'_\alpha)-\zeta_2(x_\alpha) -\frac{\delta}{d(y_\alpha)}-\delta|y_\alpha|,
\end{align*}
and
\begin{align*}
    \Psi_\alpha(t_\alpha,t'_\alpha,x_\alpha,x'_\alpha, y_\alpha) &\geq \Psi_\alpha(t,t',x,x,x)
    \\
    &= f(t,x)-f(t',x)-L|t-t'|
    -\zeta_1(t,t')-\zeta_2(x)-\frac{\delta}{d(x)}-\delta|x|
\end{align*}
for every $(t,t',x)\in [0,T)^2\times\C$, by sending $\alpha\to\infty$ and using \eqref{e.lt.contr_lip_3}, we get that
\begin{align*}
    f(t_\infty,x_\infty) - f(t'_\infty,x_\infty)-L|t_\infty-t'_\infty|
    - \zeta_1(t_\infty,t'_\infty)-\zeta_2(x_\infty) -\frac{\delta}{d(x_\infty)}-\delta|x_\infty| >0.
\end{align*}
Hence, we must have $t_\infty\neq t'_\infty$ because otherwise, the left-hand side of the above display will be nonpositive. In view of this, \eqref{e.lt.|x-x'|}, \eqref{e.lt.d(y),d(y')>} and \eqref{e.lt.2alpha|x-y|}, we can fix some large $\alpha>0$ so that $t_\alpha\neq t'_\alpha$ and $x_\alpha,x'_\alpha,y_\alpha \in \mathring \C$. 

If $t_\alpha > t'_\alpha$ which implies that $t_\alpha>0$, since $f$ solves $\HJ(\mathring\C,\sF)$ and since $(t,x)\mapsto\Psi_\alpha(t,t'_\alpha,x,x'_\alpha,y_\alpha)$ achieves a local maximum at $(t_\alpha,x_\alpha)$, we can get
\begin{align}\label{e.dt-H<0}
    \partial_t\phi(t_\alpha,x_\alpha) - \sF\left(\nabla\phi(t_\alpha,x_\alpha)\right)\leq 0,
\end{align}
where $\phi$ is given by $\Psi_\alpha(t,t'_\alpha,x,x'_\alpha,y_\alpha)=f(t,x)-\phi(t,x)$. 
Then, we show that
\begin{align}\label{e.|grad_phi|<}
    \nabla\phi(t_\alpha,x_\alpha) \in\C,\qquad \| \nabla\phi(t_\alpha,x_\alpha)\|_* \leq  \|\psi\|_{\mathrm{Lip}\|\cdot\|}.
\end{align}
Since $f-\phi$ achieves a local maximum at $(t_\alpha,x_\alpha)$ and $x_\alpha\in\mathring\C$, we have that
\begin{align*}
    \phi(t_\alpha,x)-\phi(t_\alpha,x_\alpha)\geq f(t_\alpha,x)-f(t_\alpha,x_\alpha),
\end{align*}
for all $x\in B(x_\alpha,r)$ for some sufficiently small $r>0$. Since $f\in \mathfrak{M}$ implies that $f(t_\alpha,\cdot)$ is $\C^*$-increasing, replacing $x$ by $x_\alpha + \eps y$ for $y\in\C^*$ and sufficiently small $\eps$, and sending $\eps\to0$, we obtain $\la y ,\nabla\phi(t_\alpha,x_\alpha)\ra\geq 0$ for all $y\in\C^*$, which implies the first part of \eqref{e.|grad_phi|<} by duality. By the assumption on $f$, the right-hand side of the above display is greater or equal to $-\|\psi\|_{\mathrm{Lip}\|\cdot\|}\|x-x_\alpha\|$. Varying $x \in B(x_\alpha, r)$ and using the comparability of $\|\cdot\|$ to see the second half of \eqref{e.|grad_phi|<} (where the comparability is needed because $B(x_\alpha,r)$ is a ball with respect to $|\cdot|$).

Now, let us conclude. Due to $t_\alpha>0$, we can compute that
\begin{align*}
    \partial_t\phi(t_\alpha,x_\alpha) = L+(T-t_\alpha)^{-2}\geq L+ T^{-2}.
\end{align*}
Form this, \eqref{e.dt-H<0},~\eqref{e.|grad_phi|<}, and the definition of $L$, we can deduce $T^{-2}\leq 0$, reaching a contradiction.

If $t'_\alpha>t_\alpha$ which implies that $t'_\alpha>0$, since $(t',x')\mapsto\Psi_\alpha(t_\alpha,t',x_\alpha,x',y_\alpha)$ achieves a local maximum at $(t'_\alpha,x'_\alpha)$ and since $f$ is a supersolution, we have
\begin{align}\label{e.-(L+delta)-H}
    \partial_t\tilde\phi(t'_\alpha,x'_\alpha)-\sF\left(\nabla\tilde\phi(t'_\alpha,x'_\alpha)\right)\geq 0,
\end{align}
for $\tilde\phi$ given by $\Psi_\alpha(t_\alpha,t',x_\alpha,x',y_\alpha)=\tilde\phi(t',x')-f(t',x')$. Now, since $f-\tilde\phi$ achieves a local minimum at $(t'_\alpha,x'_\alpha)$ and $x'_\alpha\in\mathring\C$, we can derive that $\nabla\tilde\phi(t'_\alpha,x'_\alpha)$ satisfies the same relations as in \eqref{e.|grad_phi|<}. Since $\partial_t\tilde\phi(t'_\alpha,x'_\alpha)= -L-(T-t'_\alpha)^{-2}\leq -L- T^{-2}$, this along with~\eqref{e.-(L+delta)-H} and the definition of $L$ yields $- T^{-2}\geq 0$, reaching a contradiction again.
\end{proof}

\subsection{Corollaries}

An immediate corollary is the application of Propositions~\ref{p.lipschitz} and~\ref{p.lip_t} to the original Hilbert norm $|\cdot|$:
\begin{corollary}[Lipschitzness in the Hilbert norm]\label{c.lipschitz}
If $f\in \mathfrak{M}\cap\mathfrak{L}$ solves $\HJ(\C,\sF;\psi)$ for some $\psi\in\Gamma_\mathrm{Lip}(\C)$ and $\sF\in\Gamma_\mathrm{locLip}(\cH)$, then $\sup_{t\in\R_+}\|f(t,\cdot)\|_{\mathrm{Lip}} =\|\psi \|_{\mathrm{Lip}}$ and $\sup_{x\in\C}\|f(\cdot,x)\|_\mathrm{Lip} \leq \sup_{\C\cap B(0,\|\psi\|_\mathrm{Lip})}|\sF|$.
\end{corollary}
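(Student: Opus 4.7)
The plan is to directly apply Propositions~\ref{p.lipschitz} and~\ref{p.lip_t} with the choice $\|\cdot\|=|\cdot|$, the original Hilbert norm, and then verify that their hypotheses are met and that their conclusions specialize to the stated ones.

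First I would verify that $|\cdot|$ qualifies as a comparable norm satisfying~\eqref{e.||||_diff}. Comparability is trivial (take $C=1$). For differentiability, at any $x\in\cH\setminus\{0\}$ one has $\nabla|x|=x/|x|$, so $|\cdot|$ is smooth away from the origin and $|\nabla|x||=1$ for all $x\neq 0$, which gives the required uniform bound. Next, since a viscosity solution of $\HJ(\C,\sF)$ is automatically a viscosity solution of $\HJ(\mathring\C,\sF)$ by Definition~\ref{d.vs}, the hypotheses of both Proposition~\ref{p.lipschitz} and Proposition~\ref{p.lip_t} are satisfied by $f$.

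Applying Proposition~\ref{p.lipschitz} with $\|\cdot\|=|\cdot|$ gives immediately
\[
\sup_{t\in\R_+}\|f(t,\cdot)\|_\mathrm{Lip} = \|\psi\|_\mathrm{Lip}.
\]
This identity is exactly what is needed to feed into the temporal estimate. Indeed, since $\sF\in\Gamma_\mathrm{locLip}(\cH)$ is in particular locally bounded, and since $f\in\mathfrak{M}\cap\mathfrak{L}$ with the spatial Lipschitz bound just established, all hypotheses of Proposition~\ref{p.lip_t} (again with $\|\cdot\|=|\cdot|$) hold. Its conclusion reads
\[
\sup_{x\in\C}\|f(\cdot,x)\|_\mathrm{Lip}\leq \sup_{\substack{v\in\C\\ \|v\|_*\leq\|\psi\|_{\mathrm{Lip}\|\cdot\|}}}|\sF(v)|.
\]
Since the Hilbert norm is self-dual, $\|\cdot\|_*=|\cdot|$, so the constraint becomes $v\in\C\cap B(0,\|\psi\|_\mathrm{Lip})$, giving exactly the second bound in the statement.

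There is no real obstacle here; the only things to check are the formal hypotheses of the two propositions with the specific choice $\|\cdot\|=|\cdot|$, namely the trivial comparability, the smoothness of the Hilbert norm away from $0$ with bounded gradient, self-duality, and the observation that a solution of $\HJ(\C,\sF)$ is a solution of $\HJ(\mathring\C,\sF)$. Once these are noted, the corollary follows by direct invocation of Propositions~\ref{p.lipschitz} and~\ref{p.lip_t}.
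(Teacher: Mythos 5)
Your proof is correct and takes essentially the same route as the paper: the corollary is obtained by specializing Propositions~\ref{p.lipschitz} and~\ref{p.lip_t} to $\|\cdot\|=|\cdot|$, noting that the Hilbert norm is comparable, smooth with unit gradient away from the origin, and self-dual, and using that a solution of $\HJ(\C,\sF)$ is automatically a solution of $\HJ(\mathring\C,\sF)$ so Proposition~\ref{p.lip_t} applies.
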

\noindent Note that if $f$ solves $\HJ(\C,\sF;\psi)$ then obviously $f$ solves $\HJ(\mathring\C,\sF;\psi)$ and thus Proposition~\ref{p.lip_t} is applicable here.

Another corollary concerns the setting where $\cH$ is a product space and $\|\cdot\|$ is an $l^p$ norm:
\begin{enumerate}[start=1,label={\rm{(P)}}]
    \item \label{i.H_prod} Let $\cH = \times_{i=1}^k\cH_i$ where each $\cH_i$ is a Hilbert space with inner product $\la\cdot,\cdot\ra_{\cH_i}$ and the induced norm $|\cdot|_{\cH_i}$. 
    Let $a_1,a_2,\ldots, a_k>0$ satisfy $\sum_{i=1}^ka_i=1$. We set $\la x,x'\ra_\cH= \sum_{i=1}^k a_i \la x_i,x'_i\ra_{\cH_i}$ for $x,x'\in\cH$.
    We define
    \begin{align*}
        \|x\|_p = \left(\sum_{i=1}^ka_i|x_i|^p_{\cH_i}\right)^\frac{1}{p},
    \end{align*}
    for $p\in[1,\infty)$,
    and $\|x\|_\infty = \sup_{i=1,2,\ldots,k}|x_i|_{\cH_i}$ for all $x\in \cH$. As usual, we set $p^* = \frac{p}{p-1}$.
\end{enumerate}
It is clear that $\|\cdot\|_{p^*}$ is dual to $\|\cdot\|_p$.

\begin{corollary}[Lipschitzness in $l^p$ norms]\label{c.lip_l^p}
Under~\ref{i.H_prod}, if $f\in\mathfrak{M}\cap\mathfrak{L}$ solves $\HJ(\C,\sF;\psi)$ for some $\psi\in\Gamma_\mathrm{Lip}(\C)$ and $\sF\in\Gamma_\mathrm{locLip}(\cH)$, then $\sup_{t\in\R_+}\|f(t,\cdot)\|_{\mathrm{Lip}\|\cdot\|_p} = \|\psi \|_{\mathrm{Lip}\|\cdot\|_p}$ and $\sup_{x\in\C}\|f(\cdot,x)\|_\mathrm{Lip}\leq\sup_{v\in\C,\,\|v\|_{p^*}\leq \|\psi\|_{\mathrm{Lip}\|\cdot\|_p}}|F(v)|$ for all $p\in[1,\infty]$.
\end{corollary}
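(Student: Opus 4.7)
The plan is to apply Propositions~\ref{p.lipschitz} and~\ref{p.lip_t} with $\|\cdot\|=\|\cdot\|_p$, after verifying the required preliminaries. First, since $(a_i)_{i=1}^k$ is a probability vector, H\"older's inequality on the probability space $(\{1,\ldots,k\},(a_i))$ gives that the dual norm of $\|\cdot\|_p$ with respect to $\la\cdot,\cdot\ra_\cH$ is exactly $\|\cdot\|_{p^*}$; the endpoint cases $p\in\{1,\infty\}$ are verified by choosing explicit maximizers $y\in\cH$. Comparability of $\|\cdot\|_p$ and $|\cdot|=\|\cdot\|_2$ is automatic in finite dimension, and can be made quantitative through $(\min_i a_i)^{1/p}\|x\|_\infty\leq \|x\|_p\leq \|x\|_\infty$ combined with the equivalence of $\|\cdot\|_\infty$ and $\|\cdot\|_2$.

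For $p\in(1,\infty)$, I would then check condition~\eqref{e.||||_diff}. Away from the origin, $x\mapsto \|x\|_p^p=\sum_i a_i|x_i|_{\cH_i}^p$ is continuously differentiable (the component of the gradient in $\cH_i$ being $pa_i|x_i|_{\cH_i}^{p-2}x_i$, which vanishes continuously at $x_i=0$ since $p>1$), and $\|x\|_p>0$ for $x\neq 0$, so $\|\cdot\|_p=(\|\cdot\|_p^p)^{1/p}$ is differentiable on $\cH\setminus\{0\}$. The bound on $|\nabla\|x\|_p|$ follows automatically from the fact that $\|\cdot\|_p$ is Lipschitz with respect to $|\cdot|$, which is a consequence of comparability. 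Proposition~\ref{p.lipschitz} then yields the spatial identity $\sup_{t\in\R_+}\|f(t,\cdot)\|_{\mathrm{Lip}\|\cdot\|_p}=\|\psi\|_{\mathrm{Lip}\|\cdot\|_p}$, and Proposition~\ref{p.lip_t} combined with the dual norm identification yields the temporal bound.

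The endpoint cases $p\in\{1,\infty\}$ require an approximation argument. By Jensen's inequality applied to the probability measure $(a_i)$, the map $q\mapsto \|x\|_q$ is nondecreasing on $[1,\infty]$, with $\|x\|_q\searrow \|x\|_1$ as $q\to 1^+$ and $\|x\|_q\nearrow\|x\|_\infty$ as $q\to\infty$; the latter limit is clear from the sandwich $(\min_i a_i)^{1/q}\|x\|_\infty\leq \|x\|_q\leq\|x\|_\infty$. Consequently, for every $g\in\Gamma_\mathrm{Lip}(\C)$, the family $q\mapsto \|g\|_{\mathrm{Lip}\|\cdot\|_q}$ is nonincreasing in $q$, and the monotone limit commutes with the supremum defining $\|g\|_{\mathrm{Lip}\|\cdot\|_p}$: for $p=1$ by elementary commutation of sup with a monotone increasing family of fractions, and for $p=\infty$ by the sandwich $\|g\|_{\mathrm{Lip}\|\cdot\|_\infty}\leq \|g\|_{\mathrm{Lip}\|\cdot\|_q}\leq (\min_i a_i)^{-1/q}\|g\|_{\mathrm{Lip}\|\cdot\|_\infty}$. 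Applying this to $g=\psi$ and uniformly in $t$ to $g=f(t,\cdot)$ transfers the spatial identity from $q\in(1,\infty)$ to $p\in\{1,\infty\}$.

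With the spatial identity established for every $p\in[1,\infty]$, Proposition~\ref{p.lip_t}, which requires only comparability of $\|\cdot\|$ and not differentiability, applies directly also for $p\in\{1,\infty\}$ with $\|\cdot\|=\|\cdot\|_p$ and the already-identified dual norm $\|\cdot\|_*=\|\cdot\|_{p^*}$, yielding the temporal bound in all cases. The only delicate step is the endpoint approximation, but the monotonicity of $q\mapsto \|\cdot\|_q$ makes the limit commutations routine.
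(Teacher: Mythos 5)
Your proof is correct and follows essentially the same route as the paper's: first reduce via Proposition~\ref{p.lip_t} (which needs only comparability and the already-established spatial Lipschitz identity, not differentiability) to proving $\sup_t\|f(t,\cdot)\|_{\mathrm{Lip}\|\cdot\|_p}=\|\psi\|_{\mathrm{Lip}\|\cdot\|_p}$; obtain this directly from Proposition~\ref{p.lipschitz} for $p\in(1,\infty)$ after checking~\eqref{e.||||_diff}; and pass to $p\in\{1,\infty\}$ by an approximation argument using monotonicity of $q\mapsto\|\cdot\|_q$ and the sandwich with $\underline a^{1/q}\|\cdot\|_\infty$. The paper's endpoint argument is phrased as a two-sided squeeze on $\|g\|_{\mathrm{Lip}\|\cdot\|_q}$ while yours invokes commutation of sup with a monotone family; these are the same mechanism. (One small slip that does not affect the argument: the gradient component of $\|x\|_p^p$ computed in the $\la\cdot,\cdot\ra_\cH$ inner product is $p|x_i|_{\cH_i}^{p-2}x_i$, without the extra $a_i$ factor you wrote.)
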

\begin{proof}
In view of Proposition~\ref{p.lip_t}, it suffices to show the uniform Lipschitzness in the spatial variable.

Since $\|\cdot\|_p$ is comparable with $|\cdot|$ for all $p\in[1,\infty]$, and $\|\cdot\|_p$ is differentiable on $\cH\setminus\{0\}$ with bounded differential for all $p\in(1,\infty)$, the desired result for $p\in(1,\infty)$ follows from Proposition~\ref{p.lipschitz}. For $p\in\{1,\infty\}$, we can use the continuity of $\|\cdot\|_p$ as $p\to1$ and $p\to\infty$ to conclude. Indeed, setting $\underline a = \min_i a_i$, we have,
\begin{align*}
    \underline a^\frac{1}{p}\|\cdot\|_\infty \leq \|\cdot\|_p\leq \|\cdot\|_\infty,
\end{align*}
which implies that, for any Lipschitz $g$,
\begin{align*}
    \|g \|_{\mathrm{Lip}\|\cdot\|_\infty} \leq \|g \|_{\mathrm{Lip}\|\cdot\|_p}\leq \underline a^{-\frac{1}{p}}\|g \|_{\mathrm{Lip}\|\cdot\|_\infty}.
\end{align*}
Sending $p\to\infty$, we obtain that $\lim_{p\to\infty}\|g \|_{\mathrm{Lip}\|\cdot\|_p} = \|g \|_{\mathrm{Lip}\|\cdot\|_\infty}$, from which we can deduce the desired result for $p=\infty$. 
Now, we turn to the case $p=1$.
Since $|g(x)-g(x')|\leq \|g \|_{\mathrm{Lip}\|\cdot\|_p}\|x-x'\|_p$, sending $p\to1$, we have $\liminf_{p\to1}\|g \|_{\mathrm{Lip}\|\cdot\|_p} \geq \|g \|_{\mathrm{Lip}\|\cdot\|_1}$. On the other hand, due to $\|\cdot\|_1\leq \|\cdot\|_p$, we have $\|g \|_{\mathrm{Lip}\|\cdot\|_1}\geq \|g \|_{\mathrm{Lip}\|\cdot\|_p}$. Therefore, we must have $\lim_{p\to1}\|g \|_{\mathrm{Lip}\|\cdot\|_p} = \|g \|_{\mathrm{Lip}\|\cdot\|_1}$, from which the result for $p=1$ follows.
\end{proof}

\section{Variational representations of solutions}
\label{s.var}
We show that the solution of $\HJ(\mathring\C,\H;\psi)$ can be represented by the Hopf--Lax formula~\eqref{e.hopf_lax} if $\H$ is convex (Proposition~\ref{p.hopf-lax}), or by the Hopf formula~\eqref{e.hopf} if $\psi$ is convex (Proposition~\ref{p.hopf}). Let us introduce the necessary notations and definitions.
For $\cD\supset\C$ and $g:\cD\to(-\infty,\infty]$, we define the \textit{monotone convex conjugate} (over $\C$) of $g$ by
\begin{align}\label{e.def_u*}
    g^*(y) = \sup_{x\in \C}\{\la x,y\ra_\cH-g(x)\},\quad\forall y \in \cH.
\end{align}
Here, the qualifier ``montone'' is added because, as a result of the supremum taken over $\C$, the function $g^*:\cH\to(-\infty,\infty]$ is $\C^*$-increasing. We denote the \textit{biconjugate} of $g$ by $g^{**}=(g^*)^*$.

\begin{definition}\label{d.fenchel_moreau_prop}
A nonempty closed convex cone $\C$ is said to possess the \textit{Fenchel--Moreau property} if the following holds: for every $g:\C\to(-\infty,\infty]$ not identically equal to $\infty$, we have that $g^{**}=g$ on $\C$ if and only if $g$ is convex, lower semicontinuous and $\C^*$-increasing.
\end{definition}

The authors coined this term in \cite[Definition~2.7]{chen2020fenchel}.
Examples of cones with the Fenchel--Moreau property include $\R^d_+$ (see \cite[Theorem~12.4]{rockafellar1970convex}), the set of positive semi-definite matrices in Example~\ref{example} (see \cite[Proposition~B.1]{chen2020hamiltonTensor}), finite-dimensional cones from mean-field spin glasses (see \cite[Proposition~5.1]{chen2022hamilton}), and more generally cones from a class called \textit{perfect cones} (the first two examples belong to this class; see \cite[Corollary~2.3]{chen2020fenchel}).

\subsection{Hopf--Lax Formula}
A classical reference to the Hopf--Lax formula is \cite{evans2010partial}.
\begin{proposition}[Hopf--Lax formula]\label{p.hopf-lax}
Suppose that $\C$ has the Fenchel--Moreau property. If $\psi \in \Gamma^\nearrow_\mathrm{Lip}(\C)$ and $\H:\cH\to\R$ satisfies that $\H\lfloor_\C\in \Gamma^\nearrow_\mathrm{locLip}(\C)$ and that $\H\lfloor_\C$ is convex and bounded below, then $f$ given by
\begin{align}\label{e.hopf_lax}
    f(t,x) = \sup_{y\in \C}\left\{\psi(x+y) - t \H^*\left(\frac{y}{t}\right)\right\},\quad\forall (t,x)\in\R_+\times\C.
\end{align}
is a Lipschitz viscosity solution of $\HJ(\mathring\C,\H;\psi)$ in $\mathfrak{M}\cap\mathfrak{L}_\mathrm{Lip}$.
\end{proposition}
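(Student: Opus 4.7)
The overall plan is to verify that the sup defines a well-behaved function, establish a dynamic programming principle (DPP), and use the DPP together with the Fenchel--Moreau property to check the viscosity solution inequalities.

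First, I would check that $f$ is finite and satisfies the initial condition. Since $\H\lfloor_\C$ is convex, lower semicontinuous, and $\C^*$-increasing, the Fenchel--Moreau property gives $\H^{**}=\H$ on $\C$, so $\H^*$ captures $\H$ faithfully. Boundedness below of $\H\lfloor_\C$ forces $\H^*\geq -\inf_\C \H>-\infty$, which combined with the Lipschitz bound $\psi(x+y)\leq \psi(x)+\|\psi\|_\mathrm{Lip}|y|$ makes the supremum finite provided $\H^*$ grows faster than linearly with slope $\|\psi\|_\mathrm{Lip}$; and indeed, local boundedness of $\H$ on $\C\cap B(0,\|\psi\|_\mathrm{Lip})$ translates via the conjugation into the needed coercivity of $\H^*$ on the relevant cone directions. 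The initial condition $f(0,x)=\psi(x)$ follows by interpreting $t\H^*(y/t)\big|_{t=0}$ as $+\infty$ for $y\neq 0$ (with the value at $y=0$ given by $\lim_{t\to 0^+}t\H^*(0)=0$ since $\H^*(0)=-\inf\H<\infty$).

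Second, I would establish the following DPP: for $0<h<t$,
\begin{align*}
f(t,x)=\sup_{z\in\C}\left\{f(t-h,x+z)-h\H^*(z/h)\right\}.
\end{align*}
The inequality ``$\geq$'' is obtained by splitting any competitor $w\in\C$ for $f(t-h,x+z)$ into $w+z\in\C$ and using convexity of $\H^*$ with weights $h/t,(t-h)/t$; the reverse inequality ``$\leq$'' is obtained, given $y\in\C$, by choosing $z=(h/t)y\in\C$ so that $(t-h)\H^*((y-z)/(t-h))+h\H^*(z/h)=t\H^*(y/t)$. Since $\C$ is a cone these splits remain in $\C$. Once the DPP is in place, spatial Lipschitzness with $\|f(t,\cdot)\|_\mathrm{Lip}\leq\|\psi\|_\mathrm{Lip}$ follows from $\sup_y[\psi(x+y)-\psi(x'+y)]\leq\|\psi\|_\mathrm{Lip}|x-x'|$, monotonicity $f\in\mathfrak{M}$ follows since $x\mapsto \psi(x+y)$ is $\C^*$-increasing for each $y$, and temporal Lipschitzness $f\in\mathfrak{L}_\mathrm{Lip}$ follows by choosing $z=0$ (giving one bound) and by combining the DPP with the spatial Lipschitz bound (giving the other).

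Third, the viscosity solution check. For the supersolution part, if $f-\phi$ has a local minimum at $(t,x)\in(0,\infty)\times\mathring\C$, substituting $f\geq\phi+c$ into the DPP gives $\phi(t-h,x+z)-\phi(t,x)-h\H^*(z/h)\leq 0$ for all $z\in\C$; putting $z=hv$ with $v\in\C$, dividing by $h$, letting $h\to 0^+$, and then taking $\sup_{v\in\C}$ yields $\H^{**}(\nabla\phi(t,x))\leq\partial_t\phi(t,x)$. Since $f\in\mathfrak{M}$ and $x\in\mathring\C$ one gets $\nabla\phi(t,x)\in\C$, so $\H^{**}=\H$ there by Fenchel--Moreau, giving the supersolution inequality. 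The subsolution part is the delicate one: plugging $f\leq\phi+c$ into the DPP gives only $\sup_{z\in\C}[\phi(t-h,x+z)-\phi(t,x)-h\H^*(z/h)]\geq 0$, so one must pick near-maximizers $z_h=hv_h$ and Taylor-expand, which requires controlling $|v_h|$. This is the main obstacle, and I would address it by using that $\nabla\phi(t,x)$ has norm at most $\|\psi\|_\mathrm{Lip}$ (which follows from the spatial Lipschitzness of $f$ combined with $f\in\mathfrak{M}$, as in the proof of Proposition~\ref{p.lip_t}), so the sup is effectively over a bounded set by coercivity of $v\mapsto \H^*(v)-\la\nabla\phi(t,x),v\ra$; this permits the Taylor expansion to pass to the limit and deliver $\H^{**}(\nabla\phi(t,x))\geq\partial_t\phi(t,x)$.

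Finally, to conclude, I would verify $f$ solves $\HJ(\mathring\C,\H;\psi)$ by invoking Lemma~\ref{l.equiv}: since $f\in\mathfrak{M}\cap\mathfrak{L}_\mathrm{Lip}$ and only the values of $\H$ on $\C$ enter the formula, and since the subsolution/supersolution checks above involve $\sF=\H$ on $\C$, the equation on the interior is satisfied regardless of how $\H$ is defined off $\C$. Uniqueness in $\mathfrak{M}\cap\mathfrak{L}_\mathrm{Lip}$ is given by Theorem~\ref{t}\eqref{i.main_cp}, so $f$ coincides with the unique solution produced there.
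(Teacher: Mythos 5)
Your proposal is correct and follows essentially the same route as the paper: establish the semigroup/DPP identity, extract the Lipschitz bounds from it, and verify the sub/super-solution inequalities via the Fenchel--Moreau identity applied at $\nabla\phi(t,x)\in\C$ (which both proofs obtain from $f\in\mathfrak{M}$ at an interior touching point). The one place you diverge is the subsolution check: the paper argues by contradiction and replaces $\phi$ by a quadratic surrogate $\bar\phi$ so that $\nabla\bar\phi$ remains in $\C$ along the competitor trajectory (needed there because $\H$ is only assumed to be nice on $\C$, and the Fenchel inequality $\H(\nabla\bar\phi)\geq\la q,\nabla\bar\phi\ra-\H^*(q)$ is invoked at nearby points). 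You instead keep the argument phrased entirely in terms of $\H^*$ (well-defined and convex on all of $\cH$) until the very last step, where the biconjugation $\H^{**}(\nabla\phi(t,x))=\H(\nabla\phi(t,x))$ is invoked only at the single point where $\nabla\phi(t,x)$ is known to lie in $\C$. This sidesteps the $\bar\phi$ trick and is a little cleaner, but it does require the care you flag: one must first show (as the paper does via~\eqref{e.liminf_H^*(y)/y} and the spatial Lipschitz bound $\|f(t,\cdot)\|_\mathrm{Lip}\leq\|\psi\|_\mathrm{Lip}$) that the supremum in the DPP for $f(t,x)$ is attained at some $z$ with $|z|\lesssim h$, \emph{before} substituting the local-maximum inequality for $\phi$ and passing to the Taylor limit uniformly on that bounded set; note it is the Lipschitzness of $f$, not the bound on $\nabla\phi$, that controls the DPP maximizer. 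Your final paragraph invoking Lemma~\ref{l.equiv} and Theorem~\ref{t}~\eqref{i.main_cp} is not needed (you have already verified the viscosity inequalities directly for $\HJ(\mathring\C,\H)$), though the uniqueness remark is harmless since the comparison principle does not depend on Proposition~\ref{p.hopf-lax}.
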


\begin{proof}
To make sense of~\eqref{e.hopf_lax} at $t=0$, we rewrite the right-hand side of~\eqref{e.hopf_lax} as
\begin{align*}
    f(t,x) = \sup_{y\in \C}\inf_{z\in\C}\left\{\psi(x+y)-\la z, y\ra + t\H(z)\right\},\quad\forall (t,x)\in\R_+\times\C.
\end{align*}
Then, we can see that, when $t=0$, the supremum in this display must be achieved at $y=0$, implying $f(0,x) = \psi(x)$ for all $x\in\C$. Since $\psi$ is $\C^*$-increasing, it is straightforward that $f\in \mathfrak{M}$. Once we show that $f$ is Lipschitz, it is immediate that $f\in\mathfrak{L}_\mathrm{Lip}$. It remains to show that $f$ is a Lipschitz solution of $\HJ(\mathring\C,\H)$. We proceed in steps.

\bigskip

\textit{Step 1 (semigroup property).}
We show that for all $t> s\geq 0$,
\begin{align}\label{e.semigroup_hopf_lax}
    f(t,x) = \sup_{y\in \C}\left\{f(s,x+y)-  (t-s) \H^*\left(\frac{y}{t-s}\right) \right\}, \quad\forall x \in \C.
\end{align}
The convexity of $\H^*$ implies that
\begin{align*}
    \H^*\left(\frac{y+z}{t}\right)\leq \frac{s}{t}\H^*\left(\frac{y}{s}\right)+ \frac{t-s}{t}\H^*\left(\frac{z}{t-s}\right),\quad\forall y,z\in \C,
\end{align*}
which along with~\eqref{e.hopf_lax} yields that
\begin{align*}
    f(t,x)\geq \sup_{y,z\in \C}\left\{\psi(x+y+z) -s\H^*\left(\frac{y}{s}\right)- (t-s) \H^*\left(\frac{z}{t-s}\right)\right\}\\
    =\sup_{z\in \C}\left\{f(s,x+z)-(t-s)\H^*\left(\frac{z}{t-s}\right)\right\}.
\end{align*}

To show the converse inequality, we claim that for any fixed $(t,x)\in(0,\infty)\times\C$, there is $y\in\C$ satisfying
\begin{align}\label{e.maximizer}
    f(t,x) = \psi(x+y)- t\H^*\left(\frac{y}{t}\right).
\end{align}
Assuming this, we set $z=\frac{t-s}{t}y$ which satisfies $\frac{z}{t-s}=\frac{y-z}{s}=\frac{y}{t}$. By this,~\eqref{e.hopf_lax}, and~\eqref{e.maximizer}, we have
\begin{align*}
    f(s,x+z)-(t-s)\H^*\left(\frac{z}{t-s}\right)\geq \psi(x+z+y-z)-s \H^*\left(\frac{y-z}{s}\right) - (t-s)\H^*\left(\frac{z}{t-s}\right)
    \\
    =\psi(x+y)-t\H^*\left(\frac{y}{t}\right)=f(t,x),
\end{align*}
which yields the desired inequality.

It remains to verify the existence of $y$ in~\eqref{e.maximizer}. Fix any $\lambda>0$ and set $x=\lambda\frac{y}{|y| }$ in~\eqref{e.def_u*} for $\H^*$ to see that
\begin{align*}
    \H^*(y)\geq \lambda|y| -\sup_{\C\cap B(0,\lambda)}|\H|,\quad\forall y \in \C.
\end{align*}
Since $\H$ is locally Lipschitz, the supremum on the right is finite. Hence, we can deduce that
\begin{align}\label{e.liminf_H^*(y)/y}
    \liminf_{\substack{y\to\infty \\ y\in\C}}\frac{\H^*(y)}{|y| }=\infty.
\end{align}
We set $L=\|\psi\|_\mathrm{Lip}$. Then, the above implies the existence of $R>0$ such that $\H^*(\frac{y}{t})\geq (L+1)\frac{|y| }{t}$ for all $y$ satisfying $\frac{|y| }{t}> R$. These imply that
\begin{align*}
    \psi(x+y)-t\H^*\left(\frac{y}{t}\right)\leq\psi(x)+L|y| -(L+1)|y| = \psi(x)-|y|,
\end{align*}
for all $y$ satisfying $|y|>tR$. Therefore, the supremum in~\eqref{e.hopf_lax} can be taken over a bounded set. Also note that the function $y\mapsto\psi(x+y) - t\H^*(\frac{y}{t})$ is upper semi-continuous and locally bounded from above due to $\H^*(z)\geq -\H(0)$. Since $\cH$ is finite-dimensional, the maximizer must exist, which ensures the existence of $y$ in~~\eqref{e.maximizer} and thus completes the proof of~\eqref{e.semigroup_hopf_lax}.

\bigskip

\textit{Step 2 (Lipschitzness).}
We first show the following claim: for every $(t,x)\in(0,\infty)\times\C$, there is $y\in\C$ such that
\begin{align}\label{e.f(t,z)-f(t,x)}
    f(t,x)- f(t,x')\leq \psi(x+y)-\psi(x'+y),\quad\forall x'\in\C.
\end{align}
Fix any $(t,x)\in(0,\infty)\times\C$. Arguing as before, we can find $y\in\C$ such that~\eqref{e.maximizer} holds.
The Hopf--Lax formula~\eqref{e.hopf_lax} gives the lower bound
\begin{align*}
    f(t,x')\geq\psi(x'+y)-t\H^*\left(\frac{y}{t}\right),
\end{align*}
which along with~\eqref{e.maximizer} yields~\eqref{e.f(t,z)-f(t,x)}.

Now, for any $(t,x,x')\in(0,\infty)\times\C\times\C$, we apply~\eqref{e.f(t,z)-f(t,x)} to both $x$ and $x'$ to see that there exist $y,y'\in\C$ such that
\begin{align*}
    \psi(x+y')-\psi(x'+y')\leq f(t,x)-f(t,x')\leq \psi(x+y)-\psi(x'+y),
\end{align*}
which immediately implies that
\begin{align}\label{e.unif_lip_hopf_lax}
    \sup_{t>0}\|f(t,\cdot)\|_\mathrm{Lip}\leq \|\psi\|_\mathrm{Lip}.
\end{align}

Then, we show that
\begin{align}\label{e.f_t_lip_hopf_lax}
    \sup_{x\in\C}\|f(\cdot,x)\|_\mathrm{Lip} \leq |\H^*(0)|\vee\sup_{\C\cap B(0,\|\psi\|_\mathrm{Lip})}\left|\H\right|.
\end{align}
Let us fix any $x\in\C$ and $t> s\geq 0$. Then,~\eqref{e.semigroup_hopf_lax} yields
\begin{align*}
    f(t,x)\geq f(s,x)-(t-s)\H^*(0).
\end{align*}
where $\H^*(0)$ is finite by the assumption that $\H\lfloor_\C$ is bounded below.
Next, using~\eqref{e.unif_lip_hopf_lax}, we can obtain from~\eqref{e.semigroup_hopf_lax} that
\begin{align*}
    f(t,x)\leq f(s,x)+\sup_{y\in \C}\left\{\|\psi\|_\mathrm{Lip}|y| -(t-s)\H^*\left(\frac{y}{t-s}\right)\right\}.
\end{align*}
Changing the variable $\frac{y}{t-s}$ to $z$, and using $\|\psi\|_\mathrm{Lip}|z|=\la z, \frac{\|\psi\|_\mathrm{Lip}z}{|z| }\ra $, we can bound the supremum on the right-hand side of the above display by
\begin{align*}
    (t-s)\sup_{z\in \C}\left\{\|\psi\|_\mathrm{Lip}|z| -\H^*(z)\right\}
    \leq (t-s)\sup_{p\in\C\cap B(0,\|\psi\|_\mathrm{Lip})}\sup_{z\in \C}\left\{\la z, p\ra -\H^*(z)\right\}
    \\
    = (t-s) \sup_{\C\cap B(0,\|\psi\|_\mathrm{Lip})} \H,
\end{align*}
where the last equality follows from the Fenchel--Moreau property of $\C$. The above three displays together yield~\eqref{e.f_t_lip_hopf_lax}.

\bigskip

\textit{Step 3 (supersolution).}
Suppose that $f-\phi$ achieves a local minimum at $(t,x)\in (0,\infty)\times \mathring\C$ for some smooth function $\phi$. Then, 
\begin{align*}
    f(t-s,x+sy) - \phi(t-s,x+sy)\geq f(t,x)-\phi(t,x)
\end{align*}
for every $y\in \C$ and sufficiently small $s>0$. On the other hand,~\eqref{e.semigroup_hopf_lax} implies that
\begin{align*}
    f(t,x)\geq f(t-s,x+sy)-s\H^*(y).
\end{align*}
Combining the above two displays, we obtain that
\begin{align*}
    \phi(t,x)- \phi(t-s,x+sy)+s\H^*(y)\geq 0.
\end{align*}
Sending $s\to0$, we have that
\begin{align*}
    \partial_t\phi(t,x)-\la y,\nabla \phi(t,x)\ra +\H^*(y)\geq 0.
\end{align*}
Taking infimum over $y\in\C$ and using the Fenchel--Moreau property of $\C$, we obtain
\begin{align*}
    \left(\partial_t\phi-\H(\nabla\phi)\right)(t,x)\geq 0,
\end{align*}
which implies that $f$ is a supersolution of $\HJ(\mathring\C,\H)$.

\bigskip

\textit{Step 4 (subsolution).}
Suppose that $f-\phi$ achieves a local maximum at $(t,x)\in(0,\infty)\times \mathring\C$. 
Then, for every $y\in\C^*$ and $\epsilon>0$ small enough, we have
\begin{align*}
    \phi(t,x+\epsilon y)-\phi(t,x)\geq  f(t,x+\epsilon y)-f(t,x)\geq 0,
\end{align*}
since $f(t,\cdot)$ is $\C^*$-increasing due to $f\in\mathfrak{M}$. By taking $\epsilon$ to $0$, the display above implies that
\begin{align*}
    \la \nabla \phi(t,x), y\ra\geq 0.
\end{align*}
Therefore, we must have $\nabla \phi(t,x)\in\C$. Using again the maximality at $(t,x)$ and the smoothness of $\phi$, we have
\begin{align*}
    f(t',x') -f(t,y) \leq \partial_t\phi(t,x)(t'-t) +\la \nabla\phi(t,x),x'-x\ra + O(|t'-t|^2+|x'-x|^2)
\end{align*}
for $(t',x')$ sufficiently close to $(t,x)$. Setting
\begin{align*}
    \bar\phi(t',x') = \partial_t\phi(t,x)(t'-t) +\la \nabla\phi(t,x),x'-x\ra + C(|t'-t|^2+|x'-x|^2),\quad\forall (t',x')\in(0,\infty)\times \mathring\C,
\end{align*}
for some sufficiently large constant $C>0$, we have that $f-\bar\phi$ achieves a local maximum at $(t,x)$. Also,
\begin{align}\label{e.nabla_bar_phi}
    \nabla\bar\phi(t',x') = \nabla \phi(t,x) + 2C(x'-x)\in\C,\quad\forall (t',x')\in (0,\infty)\times (x+\C).
\end{align}

We want to show that
\begin{align}\label{e.subsol_contrad}
    \left(\partial_t\phi-\H(\nabla\phi)\right)(t,x)\leq 0.
\end{align}
Since the first-order derivatives of $\phi$ and $\bar\phi$ coincide at $(t,x)$,
we argue by contradiction and assume that there is $\delta>0$ such that
\begin{align*}
    \left(\partial_t\bar\phi-\H(\nabla\bar\phi)\right)(t',x')\geq \delta>0,
\end{align*}
for $(t',x')\in (0,\infty)\times (x+\C)$ sufficiently close to $(t,x)$, where we used \eqref{e.nabla_bar_phi}, the continuity of $\nabla\bar\phi$, and the continuity of $\H\lfloor_\C$ (we need to modify $\phi$ into $\bar\phi$ to ensure \eqref{e.nabla_bar_phi} because $\H$ is only assumed to be continuous on $\C$). The definition of $\H^*$ (in~\eqref{e.def_u*}) implies that
\begin{align}\label{e.der_phi(t',x')>delta}
    \partial_t\bar\phi(t',x') - \la q, \nabla \bar\phi(t',x')\ra + \H^*(q)\geq \delta
\end{align}
for all such $(t',x')$ and all $q\in\cH$.

To proceed, we show that there is $R>1$ such that for every $s>0$ sufficiently small there is $x_s\in \C$ such that
\begin{gather}
    f(t,x)=f(t-s,x+x_s) - s\H^*\left(\frac{x_s}{s}\right),\label{e.f(t,x)=f(t-s,x_s)...}
    \\
    |x_s|\leq Rs. \label{e.|x-x_s|}
\end{gather}
In view of~\eqref{e.unif_lip_hopf_lax} and~\eqref{e.f_t_lip_hopf_lax}, we set $L=\|f\|_\mathrm{Lip}<\infty$. By~\eqref{e.liminf_H^*(y)/y}, we can choose $R> 1$ to satisfy $\H^*(z)\geq 2L|z|$ for every $z\in\C$ satisfying $|z|>R$. Then, for every $y\in \C$ satisfying $\frac{|y| }{s}>R$, we have
\begin{align*}
    f(t-s,x+y)-s\H^*\left(\frac{y}{s}\right)\leq f(t,x)+Ls+L|y| -2L|y| <f(t,x)+Ls(1-R).
\end{align*}
Hence, the supremum in~\eqref{e.semigroup_hopf_lax} (with $s$ therein replaced by $t-s$) can be taken over $\{y\in \C:|y| \leq Rs\}$. Since $\cH$ is finite-dimensional, we can thus conclude the existence of $x_s\in \C$ satisfying~\eqref{e.f(t,x)=f(t-s,x_s)...} and~\eqref{e.|x-x_s|}.

Returning to the proof, we can compute that, for sufficiently small $s>0$,
\begin{align*}
    \bar\phi(t,x)- \bar\phi(t-s,x+x_s)& =\int_0^1\frac{\d}{\d r}\bar\phi(t+(r-1)s,x+(1-r)x_s)\d r\\
    & = \int_0^1\left(s\partial_t\bar\phi - \la x_s,\nabla\bar\phi\ra\right)(t+(r-1)s,x+(1-r)x_s)\d r.
\end{align*}
Using~\eqref{e.der_phi(t',x')>delta} with $q$ replaced by $\frac{x_s}{s}$, and~\eqref{e.f(t,x)=f(t-s,x_s)...}, we obtain from the above that
\begin{align*}
    \bar\phi(t,x)- \bar\phi(t-s,x+x_s) \geq s\delta - s\H^*\left(\frac{x_s}{s}\right)\geq s\delta+f(t,x)-f(t-s,x+x_s).
\end{align*}
Rearranging terms, we arrive at that, for all $s>0$ sufficiently small, 
\begin{align*}
    f(t-s,x+x_s)- \bar\phi(t-s,x+x_s) \geq s\delta +f(t,x)-\bar\phi(t,x),
\end{align*}
contradicting the local maximality of $f-\bar\phi$ at $(t,x)$. Hence,~\eqref{e.subsol_contrad} must hold, implying that $f$ is a subsolution.
\end{proof}

\subsection{Hopf Formula}
Classical references to the Hopf formula include \cite{bardi1984hopf,lions1986hopf}.
\begin{proposition}[Hopf formula]\label{p.hopf}
Suppose that $\C$ has the Fenchel--Moreau property. If $\psi\in \Gamma^\nearrow_\mathrm{Lip}(\C)$ is convex, and $\H:\cH\to \R$ satisfies that $\H\lfloor_\C$ is continuous, then $f$ given by
\begin{align}\label{e.hopf}
    f(t,x) = \sup_{z\in\C}\inf_{y\in\C} \left\{\la z,x-y\ra+\psi(y)+t\H(z)\right\},\quad\forall (t,x)\in\R_+\times\C,
\end{align}
is a Lipschitz viscosity solution of $\HJ(\mathring\C,\H;\psi)$ in $\mathfrak{M}\cap\mathfrak{L}_\mathrm{Lip}$.
\end{proposition}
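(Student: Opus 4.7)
The plan is to mirror the structure of the proof of Proposition~\ref{p.hopf-lax}. First, by evaluating the inner infimum and using the monotone conjugate $\psi^*$ from~\eqref{e.def_u*}, I rewrite
$$f(t,x) = \sup_{z\in\C}\{\la z,x\ra - \psi^*(z) + t\H(z)\}.$$
At $t=0$ this becomes $\psi^{**}(x)$, which equals $\psi(x)$ on $\C$ by the Fenchel--Moreau property applied to $\psi$ (convex, Lipschitz, $\C^*$-increasing). Moreover, since $z\in\C$ makes $\la z,\cdot\ra$ be $\C^*$-increasing, $f\in\mathfrak{M}$.

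Next, I show that the effective domain $\{z\in\C:\psi^*(z)<\infty\}$ is contained in $\C\cap B(0,L)$ with $L=\|\psi\|_\mathrm{Lip}$: if $z\in\C$ with $|z|>L$, taking $y=\alpha z/|z|\in\C$ and $\alpha\to\infty$, combined with $\psi(y)\leq\psi(0)+L|y|$, forces $\psi^*(z)=\infty$. Since $\H$ is continuous on the compact $\C\cap B(0,L)$, standard manipulations of the resulting compact-sup formula yield $\sup_t\|f(t,\cdot)\|_\mathrm{Lip}\leq L$ and $\sup_x\|f(\cdot,x)\|_\mathrm{Lip}\leq\sup_{\C\cap B(0,L)}|\H|<\infty$, so $f\in\mathfrak{M}\cap\mathfrak{L}_\mathrm{Lip}$.

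For the subsolution property, suppose $f-\phi$ has a local maximum at $(t_0,x_0)\in(0,\infty)\times\mathring\C$. The upper semicontinuity in $z$ of $\la z,x_0\ra-\psi^*(z)+t_0\H(z)$ on the compact effective domain yields a maximizer $z^*$. The Hopf formula then gives $f(t,x)\geq\la z^*,x\ra-\psi^*(z^*)+t\H(z^*)$ globally, while the local maximality gives $\phi(t,x)-\phi(t_0,x_0)\geq f(t,x)-f(t_0,x_0)$ locally. Combining these and evaluating at $(t_0,x_0\pm\eps v)$ for arbitrary $v\in\cH$ (allowed since $x_0\in\mathring\C$) and at $(t_0\pm s,x_0)$ yields $\nabla\phi(t_0,x_0)=z^*$ and $\partial_t\phi(t_0,x_0)=\H(z^*)$, so $\partial_t\phi-\H(\nabla\phi)=0$ at $(t_0,x_0)$, confirming the subsolution inequality.

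The main obstacle is the supersolution property. Let $f-\phi$ have a local minimum at $(t_0,x_0)\in(0,\infty)\times\mathring\C$. From $f\in\mathfrak{M}$ combined with $x_0\in\mathring\C$, I first deduce $\nabla\phi(t_0,x_0)\in\C$ by a calculation as in Lemma~\ref{l.equiv}, and from the Lipschitzness of $f$ that $|\nabla\phi(t_0,x_0)|\leq L$, so $\psi^*(\nabla\phi(t_0,x_0))<\infty$. Since $f$ is convex in $(t,x)$ as a supremum of affine functions, the local minimum inequality extends, via interpolation with $(t_0,x_0)$ and convexity, to the global convex-analytic subgradient inequality $(\partial_t\phi,\nabla\phi)(t_0,x_0)\in\partial f(t_0,x_0)$. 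By Danskin's theorem applied to the supremum of the affine functions $h_z(t,x)=\la z,x\ra-\psi^*(z)+t\H(z)$ over the compact $\C\cap B(0,L)$ (containing the effective domain, outside of which the summand equals $-\infty$), with $h_z$ upper semicontinuous in $z$, the subdifferential $\partial f(t_0,x_0)$ equals the convex hull of $\{(\H(z^*),z^*):z^*\in Z^*(t_0,x_0)\}$, where $Z^*(t_0,x_0)$ denotes the non-empty compact set of maximizers. Writing $(\partial_t\phi,\nabla\phi)(t_0,x_0)=\sum_i\lambda_i(\H(z_i^*),z_i^*)$ with $z_i^*\in Z^*$, $\lambda_i\geq 0$, $\sum\lambda_i=1$, taking the corresponding convex combination of the identities $f(t_0,x_0)=\la z_i^*,x_0\ra-\psi^*(z_i^*)+t_0\H(z_i^*)$, and then invoking the convexity of $\psi^*$, I obtain
$$f(t_0,x_0)\leq\la\nabla\phi(t_0,x_0),x_0\ra-\psi^*(\nabla\phi(t_0,x_0))+t_0\partial_t\phi(t_0,x_0),$$
while the Hopf formula at $z=\nabla\phi(t_0,x_0)$ gives the reverse inequality with $\partial_t\phi$ replaced by $\H(\nabla\phi)$. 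Subtracting and using $t_0>0$ produces the supersolution inequality $\partial_t\phi(t_0,x_0)\geq\H(\nabla\phi(t_0,x_0))$.
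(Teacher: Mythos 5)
Your proposal is correct, and while Steps 1--3 and the subsolution argument coincide with the paper's (rewrite $f(t,x)=\sup_{z\in\C}\{\la z,x\ra-\psi^*(z)+t\H(z)\}$, get $f(0,\cdot)=\psi^{**}=\psi$ from the Fenchel--Moreau property, localize the supremum to the compact set $\C\cap B(0,\|\psi\|_\mathrm{Lip})$ for the Lipschitz bounds, and read off $\nabla\phi(t,x)=z^*$, $\partial_t\phi(t,x)\leq\H(z^*)$ from a maximizer $z^*$ at a local max), your supersolution argument is genuinely different. The paper first proves the semigroup identity $f(t+s,\cdot)=(f^*(t,\cdot)-s\H)^*$ -- which costs two further invocations of the Fenchel--Moreau property and some careful biconjugation -- and then tests it against the affine minorant coming from the subgradient inequality. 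You instead stay at fixed time: you identify $(\partial_t\phi,\nabla\phi)(t_0,x_0)$ as a subgradient of the convex function $f$, represent it via a Valadier/Ioffe--Tikhomirov (Danskin-type) formula as a convex combination $\sum_i\lambda_i(\H(z_i^*),z_i^*)$ over maximizers, and close the loop with the convexity of $\psi^*$ and the trivial lower bound of the Hopf formula at $z=\nabla\phi(t_0,x_0)$. This buys you a supersolution proof that uses the Fenchel--Moreau property only for the initial condition; the price is that the subdifferential formula must be justified in a setting where $z\mapsto-\psi^*(z)$ is merely upper semicontinuous and extended-real-valued, which is not the textbook form of Danskin's theorem. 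It does hold here (compact index set, $-\psi^*$ u.s.c., $z\mapsto(\H(z),z)$ continuous on $\C$, $f$ continuous), but only the inclusion $\partial f(t_0,x_0)\subset\conv\{(\H(z^*),z^*)\}$ is needed and you should prove it, e.g.\ by separation plus a compactness/u.s.c.\ extraction of cluster points of near-maximizers along a direction.

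One sub-claim is false as stated, though harmless: $\nabla\phi(t_0,x_0)\in\C$ with $|\nabla\phi(t_0,x_0)|\leq\|\psi\|_\mathrm{Lip}$ does \emph{not} imply $\psi^*(\nabla\phi(t_0,x_0))<\infty$ (take $\C=\R_+^2$, $\psi(x)=\la(1,0),x\ra$ and $a=(0,1)$). You do not need this: each maximizer $z_i^*$ has $\psi^*(z_i^*)\in\R$, so the convexity of $\psi^*$ already gives $\psi^*(\nabla\phi(t_0,x_0))\leq\sum_i\lambda_i\psi^*(z_i^*)<\infty$, and the final subtraction goes through.
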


\begin{proof}
Recall the definition of $g^*$ in \eqref{e.def_u*}.
It is easy to see that
\begin{align}\label{e.u**leq_u}
    g^{**}(x)\leq g(x),\quad\forall x \in \C.
\end{align}
We can rewrite~\eqref{e.hopf} as
\begin{align}
    f(t,x) 
    & = \sup_{z\in \C}\{\la z, x\ra_\cH - \psi^*(z)+t\H(z)\}\label{e.f_Hopf_1}\\
    & = (\psi^*-t\H)^*(x).\label{e.f_Hopf_2}
\end{align}

\textit{Step 1 (initial condition and monotonicity).}
Using~\eqref{e.f_Hopf_2}, we have $f(0,\cdot)=\psi^{**}$. Then, the Fenchel--Moreau property of $\C$ ensures $\psi^{**}=\psi$.
Since the supremum in the definition of $f$ is taken over $z\in\C$, it is clear that $f\in\mathfrak{M}$.

\textit{Step 2 (semigroup property).}
We want to show, for all $s\geq 0$,
\begin{align}\label{e.semigroup_hopf}
    f(t+s,\cdot)= \left(f^*(t,\cdot)-s\H\right)^*.
\end{align}
In view of the Hopf formula~\eqref{e.f_Hopf_2}, this is equivalent to
\begin{align}\label{e.semigroup_property}
    \left(\psi^* - (t+s)\H\right)^* = \left((\psi^*-t\H)^{**}-s\H\right)^*.
\end{align}
Since the conjugate is order-reversing (which means that $g_1^*\geq g_2^*$ if $g_1\leq g_2$),~\eqref{e.u**leq_u} implies
\begin{align}\label{e.semigruop_one_inequality}
    \left((\psi^*-t\H)^{**}-s\H\right)^*\geq \left(\psi^* - (t+s)\H\right)^* .
\end{align}

\smallskip

To see the other direction, we use~\eqref{e.u**leq_u} to get
\begin{align*}
    \frac{s}{t+s}\psi^* + \frac{t}{t+s}\left(\psi^*-(t+s)\H\right)^{**}\leq \psi^*-t\H.
\end{align*}
Notice that the left-hand side is convex, lower semicontinuous, and $\C^*$-increasing. Taking the biconjugate in the above display and applying the Fenchel--Moreau property of $\C$, we have
\begin{align*}
    \frac{s}{t+s}\psi^* + \frac{t}{t+s}\left(\psi^*-(t+s)\H\right)^{**}\leq (\psi^*-t\H)^{**}.
\end{align*}
Then, we rearrange terms and use~\eqref{e.u**leq_u} to see
\begin{align*}
    \left(\psi^*-(t+s)\H\right)^{**} - (\psi^*-t\H)^{**}\leq \frac{s}{t}\left((\psi^*-t\H)^{**}-\psi^*\right)\leq -s\H,
\end{align*}
and thus
\begin{align*}
    \left(\psi^*-(t+s)\H\right)^{**}\leq (\psi^*-t\H)^{**}-s\H.
\end{align*}
Taking the conjugate on both sides, using its order-reversing property, and invoking the Fenchel--Moreau property of $\C$, we get
\begin{align*}
     \left(\psi^* - (t+s)\H\right)^*\geq  \left((\psi^*-t\H)^{**}-s\H\right)^*,
\end{align*}
which together with~\eqref{e.semigruop_one_inequality} verifies~\eqref{e.semigroup_property}.

\textit{Step 3 (Lipschitzness).}
Since $\psi$ is Lipschitz, we have $\psi^*(z)=\infty$ outside the compact set $B=\{z\in\C:|z|_\cH \leq \|\psi\|_\mathrm{Lip}\}$. This together with~\eqref{e.f_Hopf_1} implies that for each $x\in\C$, there is $z\in B$ such that
\begin{align}\label{e.f(t,x)=<z,x>...}
    f(t,x) = \la z, x\ra_\cH  - \psi^*(z)+t\H(z).
\end{align}
Using this and~\eqref{e.f_Hopf_1}, we get that
\begin{align*}
    f(t,x) -f(t,x')\leq \la z, x-x'\ra_\cH \leq \|\psi\|_\mathrm{Lip}|x-x'|_\cH,\quad\forall x'\in\C.
\end{align*}
By symmetry, we conclude that $f(t,\cdot)$ is Lipschitz, and the Lipschitz coefficient is uniform in $t$.
To show the Lipschitzness in $t$, we fix any $x\in\C$. Then, we have, for some $z\in B$,
\begin{align*}
    f(t,x)&=\la z, x\ra_\cH-\psi^*(z)+t\H(z)\leq f(t',x) + (t-t')\H(z)\\
    &\leq f(t',x) +|t'-t|\left(\sup_{|z|_\cH \leq \|\psi\|_\mathrm{Lip}}|\H(z)|\right).
\end{align*}
Again by symmetry, the Lipschitzness of $f(\cdot,x)$ is obtained, and its coefficient is independent of $x$. 
In particular, we get $f \in \mathfrak{L}_\mathrm{Lip}$.

\textit{Step 4 (subsolution).}
Let $\phi:(0,\infty)\times \C\to\R$ be smooth. Suppose that $f-\phi$ achieves a local maximum at $(t,x)\in (0,\infty)\times \mathring\C$. Arguing as above, there is $z\in\C$ such that~\eqref{e.f(t,x)=<z,x>...} holds. By this and~\eqref{e.f_Hopf_1}, we have, for $s\in[ 0,t]$ and small $h\in \C$,
\begin{align*}
    f(t,x) \leq f(t-s,x+h)- \la z, h\ra_\cH+s\H( z).
\end{align*}
The local maximality of $f-\phi$ at $(t,x)$ gives
\begin{align*}
    f(t-s,x+h)-\phi(t-s,x+h)\leq f(t,x)-\phi(t,x).
\end{align*}
for small $s\in[0,t]$ and small $h\in\cH$. Then, we combine the above two inequalities to get
\begin{align}\label{e.phi(t,x)_upper_bound_verify_HJ}
    \phi(t,x)-\phi(t-s,x+h) \leq - \la z, h\ra_\cH+s\H( z),
\end{align}
for sufficiently small $s\geq0$ and $h\in\cH$.
We can set $s=0$, substitute $\eps y$ for $ h$ for any $y\in\cH$ and sufficiently small $\eps>0$, and then send $\eps\to 0$ to see $\la y, \nabla \phi(t,x) -  z \ra_\cH\geq 0$ for all $y\in\cH$, which implies $\nabla \phi(t,x) =  z$.
Then, we set $h=0$ in~\eqref{e.phi(t,x)_upper_bound_verify_HJ} and take $s\to 0$ to obtain $\partial_t \phi(t,x)\leq\H(z)$.
Hence, we get $\partial_t \phi(t,x)-\H(\nabla\phi(t,x))\leq 0$ and thus $f$ is a viscosity subsolution.

\textit{Step 6 (supersolution).}
Let $(t,x)\in(0,\infty)\times \mathring\C$ be a local minimum point for $f-\phi$. Due to~\eqref{e.f_Hopf_1}, $f$ is convex in both variables. Since $\C$ is also convex, we have, for all $(t',x')\in (0,\infty)\times \C$ and all $\lambda \in (0,1]$,
\begin{align*}
    f(t',x')-f(t,x)\geq \frac{1}{\lambda}\left(f\left(t+\lambda(t'-t),x+\lambda(x'-x)\right) - f(t,x)\right).
\end{align*}
The local minimality of $f-\phi$ at $(t,x)$ implies
\begin{align*}
    f\left(t+\lambda(t'-t),x+\lambda(x'-x)\right) - f(t,x)\geq \phi\left(t+\lambda(t'-t),x+\lambda(x'-x)\right) - \phi(t,x).
\end{align*}
Using the above two displays and sending $\lambda\to0$, we obtain
\begin{align}\label{e.sup_sol_supp_0}
    f(t',x')-f(t,x)\geq r(t'-t) + \la a,\,x'-x\ra_\cH,\quad \forall (t',x')\in\R_+\times\C,
\end{align}
where, for convenience, we set $r=\partial_t\phi(t,x)$ and $a = \nabla\phi(t,x)$.
Setting $t'=t$ in~\eqref{e.sup_sol_supp_0} and using $f\in\mathfrak{M}$ and $x\in\mathring\C$, we can verify $a\in \C$.

Fix some $s\in(0,t)$, we set
\begin{align*}
    \eta(x')=f(t,x)-rs + \la a,x'-x\ra_\cH,\quad\forall x'\in\C.
\end{align*}
Setting $t'=t-s$ in~\eqref{e.sup_sol_supp_0}, we have
\begin{align*}
    f(t-s,x')\geq \eta(x'),\quad\forall x'\in\C.
\end{align*}
Applying the order-reversing property of the conjugate twice, we obtain
\begin{align*}
    \left(f^*(t-s,\cdot)-s\H\right)^*\geq  \left(\eta^*-s\H\right)^*.
\end{align*}
By the semigroup property~\eqref{e.semigroup_hopf}, we have
\begin{align*}
    f(t,\cdot)\geq \left(\eta^*-s\H\right)^*.
\end{align*}
By $a\in\C$ and the definitions of the conjugate in~\eqref{e.def_u*}, the above yields
\begin{align*}
    f(t,x)\geq \la a, x\ra_\cH-\eta^*(a)+s\H(a).
\end{align*}
On the other hand, using the definition of $\eta$, we can compute
\begin{align*}
    \eta^*(a)= -f(t,x)+rs+\la a, x\ra_\cH.
\end{align*}
The above two displays along with the definition of $r$ and $a$ yield $\left(\partial_t\phi-\H(\nabla\phi(t,x))\right)(t,x)\geq 0$, which verifies that $f$ is a supersolution.
\end{proof}

\appendix
\section{Additional results}

In anticipation of future needs, we record variations of two of the results stated above.

\subsection{A comparison principle on the closed cone}

The following proposition is a variation of Proposition~\ref{p.cp_H}. The result below concerns $\HJ(\C,\sF)$ instead of $\HJ(\mathring\C,\sF)$ and does not require $\sF$ to be $\C^*$-increasing.

\begin{proposition}[Comparison principle]Assume that one of the following holds:
\begin{enumerate}[label=\rm{(\roman*)}]
    \item \label{i.p.cp_F_case1} $\sF\in\Gamma_\mathrm{Lip}(\cH)$, and $u,v\in\mathfrak{L}$;

    \item \label{i.p.cp_F_case2} $\sF\in\Gamma_\mathrm{locLip}(\cH)$, and $u,v\in\mathfrak{L}_\mathrm{Lip}$;
\end{enumerate}
If $u,v$ are a subsolution and a supersolution of $\HJ(\C,\sF)$, respectively, then $\sup_{\R_+\times\C}(u-v)=\sup_{\{0\}\times \C}(u-v)$.
\end{proposition}

\begin{proof}
Setting $C_1 = \sup_{\{0\}\times \C}(u-v)$, we can assume that $C_1$ is finite, otherwise there is nothing to show.
We argue by contradiction and assume that $\sup_{\R_+\times\C}(u-v)>\sup_{\{0\}\times\C}(u-v)$. Then, we can fix $T>0$ sufficiently large so that
\begin{align}\label{e.u-v_closed}
    \sup_{[0,T)\times\C}(u-v)>\sup_{\{0\}\times\C}(u-v).
\end{align}
Using $u,v\in\mathfrak{L}$, we can fix a constant $L>1$ such that
\begin{gather*}
    L>\|u(0,\cdot)\|_\mathrm{Lip}\vee \|v(0,\cdot)\|_\mathrm{Lip},
    \\
    |u(t,x)-u(0,x)|\vee |v(t,x)-v(0,x)|\leq Lt, \quad\forall (t,x)\in\R_+\times\C.
\end{gather*}
If necessary, we make $L$ larger to satisfy
\begin{align*}
    L > \sup_{t\in\R_+}\|u(t,\cdot)\|_\mathrm{Lip}\vee\|v(t,\cdot)\|_\mathrm{Lip},\quad\text{in case~\ref{i.p.cp_F_case2}}.
\end{align*}
We set
\begin{align*}
    V=
    \begin{cases}
    \|\sF\|_\mathrm{Lip},\quad &\text{in case~\ref{i.p.cp_F_case1}},
    \\
    \|\sF\lfloor_{B(0,2L+1)}\|_\mathrm{Lip}, &\text{in case~\ref{i.p.cp_F_case2}},
    \end{cases}
\end{align*}
Let $\theta:\R\to\R_+$ satisfy~\eqref{e.theta}.
For $R>0$ to be chosen, we define
\begin{align*}
    \chi(t,x) = \theta\left(\left(1+|x|^2\right)^\frac{1}{2}+Vt-R\right),\quad\forall (t,x)\in \R_+\times \C.
\end{align*}
It is immediate that
\begin{gather}
    \sup_{(t,x)\in\R_+\times\C}|\nabla\chi(t,x)|\leq 1,\label{e.cp2.|nabla_Phi|_closed}
    \\
    \partial_t\chi\geq V|\nabla \chi|,\label{e.cp2.d_tPhi>_closed}
    \\
    \chi(t,x)\geq |x|-R, \quad\forall (t,x)\in\R_+\times\C. \label{e.cp2.Phi(t,x)>M|x|_closed}
\end{gather}

For $\delta>0$ to be determined, we define
\begin{align*}
    \zeta(t,t',x)=\chi(t,x) + \delta t+\frac{\delta}{T-t}+\frac{\delta}{T-t'},\quad \forall (t,t',x) \in [0,T)^2\times\C.
\end{align*}
In view of \eqref{e.u-v_closed}, we fix $\delta>0$ sufficiently small and $R$ sufficiently large so that
\begin{align}\label{e.cp2.u-v-chi_closed}
    \sup_{(t,x)\in[0,T)\times\C}(u(t,x)-v(t,x)-\zeta(t,t,x))>\sup_{x\in\C}(u(0,x)-v(0,x)-\zeta(0,0,x)).
\end{align}
For each $\alpha>1$, we introduce
\begin{align*}
    \Psi_\alpha(t,t',x,x')= u(t,x)-v(t',x')-\frac{\alpha}{2}(|t-t'|^2+|x-x'|^2)-\zeta(t,t',x),
    \\
    \quad\forall (t,t',x,x')\in[0,T)^2\times\C^2. 
\end{align*}
By the semi-continuity of $u$ and $v$, we have that $\Psi_\alpha$ is upper semicontinuous.
By the definition of $L$ and $C_1$, we can see that, for all $t,t'\in[0,T)$ and $x,x'\in\C$,
\begin{align*}
    u(t,x)-v(t',x')\leq 2LT+u(0,x)-v(0,x)+L|x-x'|\leq 2LT+C_1 + L|x-x'|,
\end{align*}
which along with $\alpha>1$ and \eqref{e.cp2.Phi(t,x)>M|x|_closed} implies that
\begin{align*}
    \Psi_\alpha(t,t',x,x')\leq 2LT +C_1+L|x-x'|-\frac{1}{2}|x-x'|^2-(|x|-R).
\end{align*}
Hence, $\Psi_\alpha$ is bounded from above uniformly in $\alpha>1$ and decays as $|x|,|x'|\to\infty$, which implies that $\Psi_\alpha$ achieves its supremum at some $(t_\alpha,t'_\alpha,x_\alpha,x'_\alpha)$, and there is a constant $C_2>0$ such that
\begin{align*}
    |x_\alpha|,\ |x'_\alpha|\leq C_2,\quad\forall \alpha>1.
\end{align*}
Setting $C_0=\Psi_\alpha(0,0,0,0)$ which is independent of $\alpha$, we have
\begin{align*}
    C_0\leq \Psi_\alpha(t_\alpha,t'_\alpha,x_\alpha,x'_\alpha)\leq 2LT+C_1+2LC_2-\frac{\alpha}{2}(|t_\alpha-t'_\alpha|^2+|x_\alpha-x'_\alpha|^2).
\end{align*}
From this, we can see that $\alpha(|t_\alpha-t'_\alpha|^2+|x_\alpha-x'_\alpha|^2)$ is bounded as $\alpha\to\infty$.
Hence, passing to a subsequence if necessary, we may assume $t_\alpha,t'_\alpha\to t_0$ and $x_\alpha,x'_\alpha\to x_0$ for some $(t_0,x_0)\in [0,T]\times \C$.

Then, we show $t_0\in(0,T)$.
Since
\begin{align*}
    C_0\leq \Psi_\alpha(t_\alpha,t'_\alpha,x_\alpha,x'_\alpha)\leq 2LT+C_1+2LC_2-\frac{\delta}{T-t_\alpha},
\end{align*}
we must have that $t_\alpha$ is bounded away from $T$ uniformly in $\alpha$, which implies $t_0<T$. Since
\begin{align*}
    u(t_\alpha,x_\alpha) - v(t'_\alpha,x'_\alpha)-\zeta(t_\alpha,t'_\alpha,x_\alpha)\geq \Psi_\alpha(t_\alpha,t'_\alpha,x_\alpha,x'_\alpha) \\
    \geq \sup_{(t,x)\in[0,T)\times\C}(u(t,x)-v(t,x)-\zeta(t,t,x))\geq u(t_0,x_0)-v(t_0,x_0)-\zeta(t_0,t_0,x_0),
\end{align*}
sending $\alpha\to\infty$, we deduce that
\begin{align*}
    u(t_0,x_0)-v(t_0,x_0)-\zeta (t_0,t_0,x_0) = \sup_{(t,x)\in[0,T)\times\C}(u(t,x)-v(t,x)-\zeta(t,t,x)).
\end{align*}
This along with \eqref{e.cp2.u-v-chi_closed} implies that $t_0>0$. In conclusion, we have $t_0\in(0,T)$, and thus $t_\alpha,t'_\alpha\in(0,T)$ for sufficiently large $\alpha$. Henceforth, we fix any such $\alpha$.

Before proceeding, for case~\ref{i.p.cp_F_case2}, we need a bound on $|x_\alpha-x'_\alpha|$.
Using $\Psi_\alpha(t_\alpha,t'_\alpha,x_\alpha,x_\alpha)-\Psi_\alpha(t_\alpha,t'_\alpha,x_\alpha,x'_\alpha)\leq 0$, the computation that
\begin{align*}
    \Psi_\alpha(t_\alpha,t'_\alpha,x_\alpha,x_\alpha)-\Psi_\alpha(t_\alpha,t'_\alpha,x_\alpha,x'_\alpha) = v(t'_\alpha,x'_\alpha)-v(t'_\alpha,x_\alpha)+\frac{\alpha}{2}|x_\alpha-x'_\alpha|^2,
\end{align*}
and the definition of $L$, we can obtain
\begin{align}\label{e.cp2.|x_alpha-x'_alpha|_closed}
    \alpha|x_\alpha-x'_\alpha|\leq  2L,\quad\text{in case~\ref{i.p.cp_F_case2}}.
\end{align}

With this, we return to the proof. Since the function
\begin{align*}
    (t,x)\mapsto \Psi_\alpha(t,t'_\alpha,x,x'_\alpha)
\end{align*}
achieves its maximum at $(t_\alpha,x_\alpha)\in (0,T)\times \C$, by the assumption that $u$ is a subsolution, we have
\begin{align}\label{e.cp2.u_sub_Phi_alpha_closed}
    \alpha(t_\alpha-t'_\alpha) + \delta +\delta(T-t_\alpha)^{-2}+\partial_t\chi(t_\alpha,x_\alpha)-\sF\left(\alpha(x_\alpha - x'_\alpha)+\nabla\chi(t_\alpha,x_\alpha)\right)\leq 0
\end{align}
On the other hand, since the function
\begin{align*}
    (t',x')\mapsto \Psi_\alpha(t_\alpha,t',x_\alpha,x')
\end{align*}
achieves its minimum at $(t'_\alpha,x'_\alpha)\in (0,T)\times \C$, by the assumption that $v$ is a subsolution, we have
\begin{align}\label{e.cp2.v_super_Phi_alpha_closed}
    \alpha(t_\alpha-t'_\alpha) -\delta(T-t'_\alpha)^{-2} -\sF\left(\alpha(x_\alpha - x'_\alpha)\right)\geq 0.
\end{align}
In case~\ref{i.p.cp_F_case2}, by \eqref{e.cp2.|nabla_Phi|_closed} and \eqref{e.cp2.|x_alpha-x'_alpha|_closed}, the arguments inside $\sF$ in both \eqref{e.cp2.u_sub_Phi_alpha_closed} and \eqref{e.cp2.v_super_Phi_alpha_closed} have norms bounded by $2L+1$. Taking the difference of \eqref{e.cp2.u_sub_Phi_alpha_closed} and \eqref{e.cp2.v_super_Phi_alpha_closed}, and using the definition of $V$ and \eqref{e.cp2.d_tPhi>_closed}, we obtain that, in both cases,
\begin{align*}
    \delta\leq V|\nabla \chi(t_\alpha,x_\alpha)|-\partial_t\chi(t_\alpha,x_\alpha)\leq 0,
\end{align*}
contradicting the fact that $\delta>0$. Therefore, the desired result must hold.
\end{proof}

\subsection{Spatial monotonicity on the closed cone}

The following proposition is a variation of Proposition~\ref{p.monotone}. The result below concerns $\HJ(\C,\sF)$ instead of $\HJ(\mathring\C,\sF)$ and does not require $\sF$ to be $\C^*$-increasing.

\begin{proposition}[Spatial monotonicity]If $f\in\mathfrak{L}_\mathrm{Lip}$ solves $\HJ(\C,\sF;\psi)$ for some  $\psi\in\Gamma^\nearrow_\mathrm{Lip}(\C)$ and some $\sF\in\Gamma_\mathrm{locLip}(\cH)$, then $f\in\mathfrak{M}$.
\end{proposition}

\begin{remark}\label{r.d_C*}
We will need the function $d_{\C^*}$ defined below, which can be obtained from the function in \eqref{e.d} by replacing $\C$ by $\C^*$ (also recall that $(\C^*)^* =\C$),
\begin{align}\label{e.d*}
    d_{\C^*}(x) = \inf_{y\in\C,\ |y|=1}\la y, x\ra,\quad x \in \C^*.
\end{align}
For $d=d_{\C^*}$, Lemma~\ref{l.d} holds with $\C$ and $\C^*$ therein replaced by $\C^*$ and $\C$, respectively.
\end{remark}

\begin{proof}
We argue by contradiction and suppose that
\begin{align*}
    f(t_0,x_0)>f(t_0,x'_0)
\end{align*}
for some $t_0>0$ and $x_0,x'_0\in\C$ satisfying $x'_0-x_0\in\C^*$. We set $\Omega = \{(x,x')\in\C^2:x'-x\in\C^*\}$. Since $f(0,\cdot)$ is $\C^*$-nondecreasing, we thus have
\begin{align}\label{e.contrad_assump_1}
    \sup_{\substack{t\in\R_+ \\ (x,x')\in\Omega}}\left(f(t,x)-f(t,x')\right) > 0\geq  \sup_{(x,x')\in\Omega}\left(f(0,x)-f(0,x')\right).
\end{align}

\textit{Step~1.}
We introduce auxiliary functions and fix some constants.
We set $d=d_{\C^*}$ in \eqref{e.d*}.
Due to \eqref{e.contrad_assump_1}, there are $\delta>0$ and $T>0$ such that
\begin{align}
    \sup_{\substack{t\in[0,T) \\ (x,x')\in\Omega}}\left(f(t,x)-f(t,x')- \delta t -\zeta_1(t,t) -\frac{\delta}{d(x'-x)}-2\delta|x-x'|^2\right) \notag
    \\
    > \sup_{(x,x')\in\Omega}\left(f(0,x)-f(0,x')-\zeta_1(0,0)-\frac{\delta}{d(x'-x)}-2\delta|x-x'|^2\right). \label{e.contrad_assump_2}
\end{align}
where we have set
\begin{align*}
    \zeta_1(t,t') =  \delta(T-t)^{-1} + \delta(T-t')^{-1}.
\end{align*}
Fixing any $y_0\in \C\cap\C^*$ satisfying $y_0\not\in \partial \C^*$ (allowed by Lemma~\ref{l.cone_itr}), we set
\begin{align*}
    C_0 = -\left( f(0,0)-f(0,y_0)-\zeta_1(0,0) -\frac{\delta}{d(y_0)} -2 \delta|y_0|^2\right)
\end{align*}
which is finite.
Since $f\in\mathfrak{L}_\mathrm{Lip}$, $f(0,\cdot)=\psi$, and $\psi$ is Lipschitz, we can fix a constant $L>1$ to satisfy
\begin{gather}
    L > \|\psi\|_\mathrm{Lip}\vee\sup_{t>0}\|f(t,\cdot)\|_\mathrm{Lip},\label{e.Llowerbd}
    \\
    |f(t,x) - \psi(x)|\leq L t,\quad\forall (t,x)\in \R_+\times \C.\nonumber
\end{gather}
We set $K$ to be a positive constant depending only on $\delta, T,C_0,L$ and given explicitly by \eqref{e.K} and \eqref{e.C_2}.
Lastly, we set
\begin{gather}\label{e.V}
    V = \sup_{p,p'\in B(0,K+L+KL)}\frac{|\sF(p)-\sF(p')|}{|p-p'|}.
\end{gather}
Let $\theta:\R\to\R_+$ satisfy \eqref{e.theta}.
Then, we take
\begin{align*}
    \zeta_2(t,x) = \theta\left(\left(1+|x|^2\right)^\frac{1}{2}+Vt -R\right)
\end{align*}
where $R>0$ is chosen sufficiently large so that
\begin{align}\label{e.zeta_2(0,y_0)=0}
    \zeta_2(0,0) =0
\end{align}
and, due to \eqref{e.contrad_assump_2},
\begin{align}
    \sup_{\substack{t\in[0,T) \\ (x,x')\in\Omega}}\left(f(t,x)-f(t,x')- \delta t -\zeta_1(t,t)-\zeta_2(t,x) -\frac{\delta}{d(x'-x)}-2\delta|x-x'|^2\right) \notag
    \\
    > \sup_{(x,x')\in\Omega}\left(f(0,x)-f(0,x')-\zeta_1(0,0)-\zeta_2(0,x)-\frac{\delta}{d(x'-x)}-2\delta|x-x'|^2\right). \label{e.contrad_assump_3}
\end{align}
We record the following useful estimates: for all $t,t',x,x'$,
\begin{gather}
    f(t,x)- f(t',x') \leq Lt+ Lt' +L|x-x'|, \label{e.f-f<}
    \\
    \zeta_2(t,x) \geq |x|-R, \label{e.zeta_2>}
    \\
    \partial_t\zeta_2(t,x) \geq V|\nabla\zeta_2(t,x)|. \label{e.d_tzeta_2>}
\end{gather}

For $\alpha>1$ to be chosen, we define
\begin{align*}
    \Psi_\alpha(t,t',x,x',y) = f(t,x) - f(t',x') -\delta t -\zeta_1(t,t') - \zeta_2(t,x)-\zeta_3(x,x',y) - \alpha|t-t'|^2 - \delta|x-x'|^2,
    \\
    \forall (t,t',x,x',y)\in [0,T)^2\times \Omega\times \C^*,
\end{align*}
where $\zeta_3$ depends on $\alpha$ and is given by
\begin{align*}
    \zeta_3(x,x',y) = \alpha|x'-x-y|^2 +\frac{\delta}{d(y)}+\delta|y|^2,\quad\forall (x,x',y) \in \Omega\times \C^*.
\end{align*}
Due to \eqref{e.zeta_2(0,y_0)=0} and $y_0\in\C\cap\C^*$, note that
\begin{align}\label{e.Psi>C_0}
    \Psi_\alpha(0,0,0,y_0,y_0) = -C_0,\quad\forall \alpha.
\end{align}
In summary, we have fixed constants $\delta, T, y_0, C_0, L, K, V, R$ and introduced $\Psi_\alpha$ for $\alpha$ to be chosen.

\textit{Step~2.}
We show that there is a maximizer of $\Psi_\alpha$.
Temporarily, fix any $\alpha>1$. Let $(t_{\alpha,n},t'_{\alpha,n},x_{\alpha,n},x'_{\alpha,n},y_{\alpha,n})$ be a maximizing sequence of $\Psi_\alpha$. For sufficiently large $n$, we have $\Psi_\alpha(t_{\alpha,n},t'_{\alpha,n},x_{\alpha,n},x'_{\alpha,n},y_{\alpha,n})\geq \Psi_\alpha(0,0,0,y_0,y_0)-1$, which along with \eqref{e.f-f<}, \eqref{e.zeta_2>}, \eqref{e.Psi>C_0} and that $\zeta_3(x,x',y)\geq \delta|y|^2$ implies that
\begin{align*}
    Lt_{\alpha,n}+Lt'_{\alpha,n}+L|x_{\alpha,n}-x'_{\alpha,n}| - (|x_{\alpha,n}| - R) - \delta |y_{\alpha,n}|^2 - \delta |x_{\alpha,n}-x'_{\alpha,n}|^2   \geq - C_0-1
\end{align*}
Using $t_{\alpha,n},t'_{\alpha,n} \leq T$ and
\begin{align}\label{e.Lr-deltar^2}
    L r- \delta r^2  \leq \frac{L^2}{4\delta}\quad\forall r\in\R,
\end{align}
we can see that $x_{\alpha,n}$ and $y_{\alpha,n}$ are bounded uniformly in $\alpha, n$. The above display also implies that
\begin{align*}
    2LT + L|x_{\alpha,n}-x'_{\alpha,n}|+R + C_0+1 \geq \delta|x_{\alpha,n}-x'_{\alpha,n}|^2,
\end{align*}
which yields the boundedness of $x_{\alpha,n}-x'_{\alpha,n}$ and thus that of $x'_{\alpha ,n}$. Hence, we can pass $n$ to infinity along a subsequence to obtain a maximizer $(t_\alpha,t'_\alpha,x_\alpha,x'_\alpha,y_\alpha)$ of $\Psi_\alpha$. Moreover, there is a positive constant $C_1$ (depending only on $C_0,L,T,R,\delta$) such that
\begin{align}\label{e.<C_1}
    t_\alpha,\, t'_\alpha<T,\qquad |x_\alpha|,\,|x'_\alpha|,\,|y_\alpha|<C_1,\quad\forall \alpha>1,
\end{align}
where the strict upper bound on $ t_\alpha,\, t'_\alpha$ is due to the presence of $\zeta_1$ in $\Psi_\alpha$.

\textit{Step~3.}
We derive some estimates on $t_\alpha,t'_\alpha,x_\alpha,x'_\alpha,y_\alpha$.

Using $\Psi_\alpha(t_\alpha,t'_\alpha,x_\alpha,x'_\alpha,y_\alpha)\geq -C_0$ (due to \eqref{e.Psi>C_0}), we have
\begin{align*}
    \left(f(t_\alpha,x_\alpha)-f(t'_\alpha,x'_\alpha) - \delta|x_\alpha-x'_\alpha|^2\right) -\alpha|t_\alpha-t'_\alpha|^2 - \frac{\delta}{d(y_\alpha)} - \delta|y_\alpha|^2 \geq -C_0.
\end{align*}
Using \eqref{e.f-f<}, \eqref{e.Lr-deltar^2}, and $t_\alpha,t'_\alpha\leq T$, we can bound the term in parentheses in the above display by $2LT + \frac{L^2}{4\delta}$. Setting
\begin{align}\label{e.C_2}
    C_2 = \left(2LT + \frac{L^2}{4\delta} + C_0\right)\vee 1,
\end{align}
we have
\begin{align}
    |t_\alpha- t'_\alpha|\leq \alpha^{-\frac{1}{2}}C_2^{\frac{1}{2}}, \label{e.t-t'}
    \\
    d(y_\alpha)>\delta C_2^{-1},
    \label{e.d(y)>}
    \\
    |y_\alpha|\leq \delta^{-\frac{1}{2}}C_2^\frac{1}{2}. \label{e.y<}
\end{align}

By Lemma~\ref{l.d}~\eqref{i.d(x)=0} and Remark~\ref{r.d_C*} for $d=d_{\C^*}$, the lower bound in \eqref{e.d(y)>} implies that $y_\alpha$ is in the interior of $\C^*$. Since $y\mapsto \Psi_\alpha(t_\alpha,t'_\alpha,x_\alpha,x'_\alpha,y)$ achieves a local maximum at $y_\alpha$, the function $y\mapsto -\zeta_3(x_\alpha,x'_\alpha,y)$ has a local maximum at $y_\alpha$. Using Lemma~\ref{l.d}~\eqref{i.g-1/d} and Remark~\ref{r.d_C*}, we get that
\begin{align*}
   \left(d(y_\alpha)\right)^2\left(2 \delta^{-1}\alpha(y_\alpha-(x'_\alpha-x_\alpha)) + 2 y_\alpha\right)\in D^+d(y_\alpha),
\end{align*}
which, along with Lemma~\ref{l.d}~\eqref{i.Dd} (along with Remark~\ref{r.d_C*}), \eqref{e.d(y)>}, and \eqref{e.y<}, implies that
\begin{align}
    |2\alpha(x'_\alpha -x_\alpha - y_\alpha)| \leq  K, \label{e.<K}
\end{align}
where we have set
\begin{align}\label{e.K}
    K = \delta^{-1}C_2^2+ 2\delta^\frac{1}{2}C_2^\frac{1}{2}.
\end{align}
In particular, we have
\begin{align}
    |x'_\alpha -x_\alpha - y_\alpha| = O\left(\alpha^{-1}\right) \label{e.x'-x-y}
\end{align}

By passing to a subsequence, due to \eqref{e.<C_1}, and \eqref{e.t-t'}, and \eqref{e.x'-x-y}, we can assume that $(t_\alpha,t'_\alpha,x_\alpha,x'_\alpha,y_\alpha)$ converges to some $(t_\infty,t_\infty,x_\infty,x'_\infty,x'_\infty - x_\infty)$ as $\alpha\to \infty$. Since
\begin{align*}
    &\Psi_\alpha(t_\alpha,t'_\alpha,x_\alpha,x'_\alpha,y_\alpha)
    \\
    &\leq f(t_\alpha, x_\alpha) - f(t'_\alpha,x'_\alpha)-\delta t_\alpha -\zeta_1(t_\alpha,t'_\alpha) -\zeta_2(t_\alpha,x_\alpha)  - \frac{\delta}{d(y_\alpha)}-\delta|y_\alpha|^2-\delta|x_\alpha-x'_\alpha|^2,
\end{align*}
and, for all $t\in[0,T)$ and $(x,x')\in\Omega$,
\begin{align*}
    \Psi_\alpha(t_\alpha,t'_\alpha,x_\alpha,x'_\alpha,y_\alpha) &\geq \Psi_\alpha(t,t,x,x',x'-x) 
    \\
    &= f(t,x)-f(t,x') -\delta t -\zeta_1(t,t) -\zeta_2(t,x) - \frac{\delta}{d(x'-x)}-2\delta|x-x'|^2,
\end{align*}
we obtain by passing $\alpha\to\infty$ that
\begin{align*}
    f(t_\infty, x_\infty) - f(t_\infty,x'_\infty)-\delta t_\infty -\zeta_1(t_\infty,t_\infty) -\zeta_2(t_\infty,x_\infty)  - \frac{\delta}{d(x'_\infty- x_\infty)}- 2\delta|x_\infty-x'_\infty|^2
\end{align*}
is greater than or equal to the left-hand side of \eqref{e.contrad_assump_3}. Hence, we must have $t_\infty>0$ and thus
\begin{align}\label{e.t,t'>0}
    t_\alpha,\, t'_\alpha>0
\end{align}
for sufficiently large $\alpha$.

\textit{Step~4.}
We derive a contradiction by using the maximality of $\Psi_\alpha$ at $(t_\alpha,t'_\alpha,x_\alpha,x'_\alpha,y_\alpha)$ and the assumption that $f$ solves $\HJ(\C,\sF)$.

We infer from \eqref{e.d(y)>} and \eqref{e.x'-x-y} that $x'_\alpha-x_\alpha$ is in the interior of $\C^*$ for sufficiently large $\alpha$. Fix some $\alpha$ such that this and \eqref{e.t,t'>0} hold. Set
\begin{gather*}
    \phi(t,x) = f(t,x) - \Psi_\alpha(t,t'_\alpha,x,x_\alpha,y_\alpha),
    \\
    \phi'(t',x') = \Psi_\alpha(t_\alpha,t',x_\alpha,x',y_\alpha) + f(t',x').
\end{gather*}
Since $(t_\alpha,t'_\alpha,x_\alpha,x'_\alpha)$ maximizes $\Psi_\alpha$, by the choice of this $\alpha$ (ensuring $ t_\alpha,\, t'_\alpha\in(0,T)$ and $x'_\alpha-x_\alpha\in \itr \C^*$), we have that $f-\phi$ achieves a local maximum at $(t_\alpha,x_\alpha)\in(0,\infty)\times \C$ and $f-\phi'$ achieves a local minimum at $(t'_\alpha,x'_\alpha)\in(0,\infty)\times \C$.
Since $f$ solves $\HJ(\C,\sF)$, these imply that
\begin{gather}
    \partial_t\phi(t_\alpha,x_\alpha) - \sF\left(\nabla \phi(t_\alpha,x_\alpha)\right)\leq 0 \label{e.xi_sub}
    \\
    \partial_t\phi'(t'_\alpha,x'_\alpha) - \sF\left(\nabla \phi'(t'_\alpha,x'_\alpha)\right)\geq 0\label{e.xi'_sup}
\end{gather}
where
\begin{align*}
    \partial_t\phi(t_\alpha,x_\alpha) & = \delta +\delta  (T-t_\alpha)^{-2}  +\partial_t\zeta_2(t_\alpha,x_\alpha) +2\alpha(t_\alpha-t'_\alpha),
    \\
    \partial_t\phi'(t'_\alpha,x'_\alpha) & =-\delta(T-t'_\alpha)^{-2}  + 2\alpha(t_\alpha-t'_\alpha),
    \\
    \nabla \phi(t_\alpha,x_\alpha) & = \nabla \zeta_2(t_\alpha,x_\alpha)-2\alpha(x'_\alpha-x_\alpha-y_\alpha)+2\delta(x_\alpha-x'_\alpha)
    \\
    \nabla \phi'(t'_\alpha,x'_\alpha) & = -2\alpha(x'_\alpha-x_\alpha-y_\alpha)+2\delta(x_\alpha-x'_\alpha).
\end{align*}
Using \eqref{e.d_tzeta_2>}, we have
\begin{align}\label{e.rel_der_xi}
    \partial_t\phi(t_\alpha,x_\alpha) - \partial_t\phi'(t'_\alpha,x'_\alpha) \geq \delta + V|\nabla \phi(t_\alpha,x_\alpha) - \nabla \phi'(t'_\alpha,x'_\alpha)|.
\end{align}
Recall from its definition in \eqref{e.V} that $V$ is the Lipschitz coefficient of $\sF$ over a ball of radius $L+K+LK$.
Assuming that
\begin{align}\label{e.bd_nabla_phi}
    |\nabla \phi(t_\alpha,x_\alpha)|,\, |\nabla \phi'(t'_\alpha,x'_\alpha)| \leq L+K+LK,
\end{align}
we can deduce from \eqref{e.xi_sub}, \eqref{e.xi'_sup} and \eqref{e.rel_der_xi} that $\delta\leq 0$, which is absurd.

\textit{Step~5.}
We complete the proof by verifying \eqref{e.bd_nabla_phi}. For brevity, we write $\nabla\phi=\nabla\phi(t_\alpha,x_\alpha)$ and $\nabla\phi'=\nabla\phi'(t'_\alpha,x'_\alpha)$.

We first treat $\nabla\phi$. Since $f-\phi$ achieves a local maximum at $(t_\alpha,x_\alpha)$, we have $f(t_\alpha,x) - f(t_\alpha,x_\alpha) \leq \phi(t_\alpha,x) - \phi(t_\alpha,x_\alpha)$ for $x\in\C$ sufficiently close to $x_\alpha$. Note that for every $\lambda>0$ and every $z\in\C$, we have $\eps\lambda(z-x_\alpha) +x_\alpha\in\C$ for all $\eps \in (0, \frac{1}{\lambda}]$. Replacing $x$ by $\eps\lambda(z-x_\alpha) +x_\alpha$ and sending $\eps\to0$, by \eqref{e.Llowerbd} we have
\begin{align}\label{e.>-L}
    \la\lambda (z-x_\alpha) ,\nabla\phi\ra \geq -L|\lambda (z-x_\alpha)|,\quad \forall \lambda>0, z\in\C.
\end{align}
Since
\begin{align*}
    \nabla \zeta_2(t_\alpha,x_\alpha) = \frac{\theta'\left(\left(1+|x_\alpha|^2\right)^\frac{1}{2}+Vt_\alpha-R\right)}{\left(1+|x_\alpha|^2\right)^\frac{1}{2}} x_\alpha,
\end{align*}
we can see that $-\nabla\phi - u_\alpha = \lambda_\alpha(z_\alpha-x_\alpha)$ for some $\lambda_\alpha>0$ and $z_\alpha\in\C$, where for brevity we have set
\begin{align*}
    u_\alpha = 2\alpha(x'_\alpha -x_\alpha-y_\alpha).
\end{align*}
Hence, inserting $\lambda_\alpha, z_\alpha$ into \eqref{e.>-L}, we get
\begin{align*}
    \la -\nabla\phi - u_\alpha, \nabla\phi\ra \geq -L |\nabla\phi + u_\alpha|.
\end{align*}
Using $|u_\alpha|\leq K$ due to \eqref{e.<K}, we have
\begin{align*}
    |\nabla\phi|^2\leq (L+K)|\nabla\phi|+ LK
\end{align*}
and thus
\begin{align*}
    |\nabla\phi|\leq 1\vee(L+K+LK)= L+K+LK,
\end{align*}
where in the last equality we used our choice that $L> 1$.
Now, we turn to $|\nabla\phi'|$. Since $f-\phi'$ achieves a local minimum at $(t'_\alpha,x'_\alpha)$, by a similar argument, we have
\begin{align*}
    \la \lambda(z-x'_\alpha) ,\nabla\phi' \ra \leq L |\lambda(z-x'_\alpha)|,\quad\forall \lambda>0,z\in\C.
\end{align*}
Since now we simply have $\nabla\phi'+u_\alpha = 2\alpha(x_\alpha-x'_\alpha)$, we deduce that
\begin{align*}
    \la \nabla\phi'+u_\alpha, \nabla\phi'\ra \leq L |\nabla\phi'+u_\alpha|
\end{align*}
and, due to $|u_\alpha|\leq K$ (see \eqref{e.<K}),
\begin{align*}
    |\nabla\phi'|\leq 1\vee (L+K+LK)=L+K+LK.
\end{align*}
Therefore, we have verified \eqref{e.bd_nabla_phi} and completed the proof.
\end{proof}

\bibliographystyle{abbrv}

\end{document}